\newcommand{\gon}{\mathrm{gon}}
\newcommand{\covgon}{\mathrm{cov.gon}}
\newcommand{\C}{\mathbb{C}}
\newcommand{\p}{\mathbb{P}}
\newcommand{\QQ}{\mathbb{Q}}
\newcommand{\Cl}{\operatorname{Cl}}
\newcommand{\Bir}{\operatorname{Bir}}
\newcommand{\Aut}{\operatorname{Aut}}
\newcommand{\NE}{\operatorname{NE}}
\newcommand{\Q}{\mathbin{\sim_{\scriptscriptstyle{\Q}}}}
\renewcommand{\to}{ \, \tikz[baseline=-.6ex] \draw[->,line width=.5] (0,0) -- +(.5,0); \, }
\newcommand{\rat}{ \, \tikz[baseline=-.6ex] \draw[->,densely dashed,line width=.5] (0,0) -- +(.5,0); \, }
\newcommand{\ps}{ \, \tikz[baseline=-.6ex] \draw[->,dotted,line width=.6] (0,0) -- +(.5,0); \,}
\newcommand\iso{\stackrel{\simeq}{\to}}
\renewcommand{\thesubsection}{\arabic{section}.\arabic{subsection}}
\theoremstyle{plain}
\newtheorem{theorem}[subsection]{Theorem}
\newtheorem{lemma}[subsection]{Lemma}
\newtheorem{proposition}[subsection]{Proposition}
\newtheorem{construction}[subsection]{Construction}
\newtheorem{corollary}[subsection]{Corollary}
\newtheorem*{claim*}{Claim}
\theoremstyle{definition}
\newtheorem{definition}[subsection]{Definition}
\newtheorem*{definition*}{Definition}
\newtheorem*{notation*}{Notation}
\newtheorem{question}[subsection]{Question}
\newtheorem{remark}[subsection]{Remark}
\newcounter{NN}
\newcounter{NO}
\newcommand{\FT}{$\mathrm{FT}^{\mathrm{t}}$ }
\tikzset{>=stealth}
\tikzset{link/.style={column sep=1.8cm,row sep=0.16cm}}
\author{J\'er\'emy Blanc}
\address{J\'er\'emy Blanc, Departement Mathematik und Informatik,  Universit\"at Basel, Spiegelgasse 1, 4051 Basel, Switzerland}
\email{jeremy.blanc@unibas.ch}
\author{Ivan Cheltsov}
\address{Ivan Cheltsov, School of Mathematics, The University of Edinburgh,  Edinburgh EH9 3JZ, UK.}
\email{I.Cheltsov@ed.ac.uk}
\author{Alexander Duncan}
\address{Alexander Duncan, Department of Mathematics, University of South Carolina,
Columbia, SC 29208, USA.}
\email{duncan@math.sc.edu}
\author{Yuri Prokhorov}
\address{Yuri Prokhorov, Steklov Mathematical Institute of Russian Academy of Sciences, Moscow, Russia}
\email{prokhoro@mi-ras.ru}
\title{Birational self-maps of threefolds of (un)-bounded genus or gonality}
\subjclass{14E07, 14J45}
\begin{document}

\begin{abstract}
We study the complexity of birational self-maps of a projective threefold $X$ by looking at the birational type of surfaces contracted. These surfaces are birational to the product of the projective line with a  smooth projective curve. We prove that the genus of the curves occuring is unbounded if and only if $X$ is birational to a conic bundle or a fibration into cubic surfaces. Similarly, we prove that the gonality of the curves is unbounded if and only if $X$ is birational to a conic bundle.
\end{abstract}
\maketitle
\tableofcontents
\section{Introduction}
Let $X$ be a smooth  projective complex algebraic variety. One way of studying the complexity of the geometry of elements of the group $\Bir(X)$ of birational self-maps of $X$ consists of studying the complexity of the irreducible hypersurfaces contracted by elements of $\Bir(X)$ (remember that an irreducible hypersurface $H$ is contracted by $\varphi\in \Bir(X)$ if $\varphi(H)$ has codimension $\ge 2$ in $X$). If $X$ is a curve, then $\Bir(X)=\Aut(X)$, so there is nothing to be said. If $X$ is a surface, every irreducible curve contracted by an element of $\Bir(X)$ is rational. The case of threefolds is then the first interesting to study in this context.

If $\dim(X)=3$, then every irreducible surface contracted by a birational transformation  $\varphi\in\Bir(X)$ is birational to $\p^1\times C$ for some smooth projective curve~$C$. There are then two natural integers that one can associate to $C$ in this case, namely its genus $g(C)$ and its gonality $\gon(C)$ (the minimal degree of a dominant morphism $C\to \p^1$). We then define the \emph{genus} $g(\varphi)$ (respectively the \emph{gonality} $\gon(\varphi)$) of $\varphi$ to be the maximum of the genera $g(C)$ (respectively of the gonalities $\gon(C)$) of the smooth projective curves $C$ such that a hypersurface of $X$ contracted by $\varphi$ is birational to $\p^1\times C$.

This notion of genus of elements of $\Bir(X)$ was already defined in \cite{Frumkin} with another definition, which is in fact equivalent to the definition above by \cite[Proposition~3]{Lamy2014}.  Moreover, for each $g$, the set of elements of $\Bir(X)$ of genus $\le g$ form a subgroup, so we get a natural filtration on $\Bir(X)$, studied in \cite{Frumkin,Lamy2014}. This naturally raises the question of finding the threefolds $X$ for which this filtration is infinite, namely the threefolds $X$ for which the genus of $\Bir(X)$ is unbounded (see \cite[Question~11]{Lamy2014}). Analogously, we get a filtration given by the gonality. Of course, the gonality is bounded if the genus is bounded, the unboundedness of the gonality is stronger than the unboundedness of the genus.

Note that the boundedness of the genus (respectively of the gonality) of elements of $\Bir(X)$ is a birational invariant. Our main result (Theorem~\ref{MainTheorem}) describes the threefolds having this property.

Recall that a variety $Y$ is a \emph{conic bundle} (respectively a \emph{del Pezzo fibration of degree $d$}) if $Y$ admits a morphism $Y\to S$ such that the generic fibre is a conic (respectively a del Pezzo surface of degree $d$) over the field of rational functions of $S$. If the conic has a rational point (or equivalently the conic bundle has a rational section), then it is isomorphic to $\p^1$ and in this case we say that the conic bundle is \emph{trivial}.

\begin{theorem}\label{MainTheorem}
Let $X$ be a smooth projective complex algebraic threefold.

\begin{enumerate}
\item\label{ConicBundleDim3Unbounded}
If $X$ is birational to a conic bundle, the gonality and the genus of the elements $\Bir(X)$ are both unbounded.
\item\label{DelPezzo3GenusUnbounded}
If $X$ is birational to a del Pezzo fibration of degree $3$, the genus of the elements of $\Bir(X)$ is unbounded.
\item\label{NotConicBundleGonalityBounded}
If $X$ is not birational to a conic bundle, the gonality of the elements of $\Bir(X)$ is bounded.
\item\label{NotConicBundleNotDp3GenusBounded}
If $X$ is not birational to a conic bundle and to a del Pezzo fibration of degree $3$, then both the genus and the gonality of elements of $\Bir(X)$ are bounded.
\end{enumerate}
\end{theorem}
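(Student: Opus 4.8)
The plan is to put $X$ into a convenient birational model via the minimal model program and then to analyse every $\varphi\in\Bir(X)$ through the Sarkisov program. Since boundedness of the genus and of the gonality is a birational invariant, I may replace $X$ by a Mori fibre space $Y$ in its birational class, so that $Y$ is a Fano threefold of Picard rank one, a del Pezzo fibration $Y\longrightarrow B$ over a curve, or a conic bundle $Y\longrightarrow S$ over a surface. The basic reformulation is the following: factoring an arbitrary $\varphi\in\Bir(Y)$ into elementary Sarkisov links, every surface contracted by $\varphi$ is, up to birational equivalence, the exceptional divisor of a divisorial extraction occurring in one of these links, and such a divisor is ruled over the centre $\Gamma$ of that extraction. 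Hence each relevant curve $C$ is birational to such a centre $\Gamma$, and it suffices to bound (or to unbound) the gonality and the genus of the curves $\Gamma$ that can arise as centres of links starting from Mori fibre spaces in the birational class of $X$.

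For the unboundedness statements \ref{ConicBundleDim3Unbounded} and \ref{DelPezzo3GenusUnbounded} I would produce such centres explicitly. If $Y\longrightarrow S$ is a conic bundle, then for any irreducible curve $\Gamma\subset S$ the surface $\pi^{-1}(\Gamma)$ is a conic bundle over $\Gamma$, hence birational to $\p^1\times\Gamma$ by Tsen's theorem; I would build a birational self-map of $Y$, fibred over $S$ and degenerate along $\Gamma$, whose contracted divisor is $\pi^{-1}(\Gamma)$ (for the trivial conic bundle this is the fibrewise multiplication by a rational function $h$ on $S$ vanishing along $\Gamma$; in general one uses a reflection of the generic conic that becomes parabolic along $\Gamma$). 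Taking $\Gamma$ to be a general member of $|nH|$ for a very ample $H$ and $n\gg 0$ makes both $\gon(\Gamma)$ and $g(\Gamma)$ arbitrarily large, giving \ref{ConicBundleDim3Unbounded}. If $Y\longrightarrow B$ is a del Pezzo fibration of degree $3$, I would instead take $\Gamma$ to be a trisection cut out by the cubic-surface geometry (the three points in which a line of the ambient $\p^3$ meets a fibre), blow it up, and untwist to obtain a self-map contracting a divisor ruled over $\Gamma$. Such $\Gamma$ map $3$-to-$1$ onto $B$, so $\gon(\Gamma)\le 3$ stays bounded while the genus grows with the branching; this yields \ref{DelPezzo3GenusUnbounded} and at the same time explains why only the genus, and not the gonality, is forced to be unbounded here.

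For the boundedness of the gonality \ref{NotConicBundleGonalityBounded}, assume $X$ is not birational to a conic bundle, so that all intermediate Mori fibre spaces are Fano threefolds of Picard rank one or del Pezzo fibrations $Y\longrightarrow B$ over curves. A non-rational centre $\Gamma$ is necessarily \emph{horizontal}, i.e.\ a genuine multisection of $Y\longrightarrow B$ (a \emph{vertical} centre lies in a single del Pezzo fibre and yields only rational $C$), and composing the induced finite map $\Gamma\longrightarrow B$ with a fixed pencil on $B$ realises a map $\Gamma\dashrightarrow\p^1$ of degree $\deg(\Gamma/B)\cdot\gon(B)$. Thus it suffices to bound the horizontal degree $\deg(\Gamma/B)$ uniformly over all admissible centres, which I expect to follow from the boundedness of the centres of maximal singularities supplied by the Noether--Fano method (and, in the Fano case, from the boundedness of Fano threefolds). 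Since $\gon(B)$ is a constant depending only on $X$, the gonality is then bounded.

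Finally, \ref{NotConicBundleNotDp3GenusBounded} requires in addition that the genus of the admissible centres be bounded when the del Pezzo degree is different from $3$. Here the point is no longer the horizontal degree of $\Gamma$ but which multisections may actually serve as centres of a Sarkisov link: the Noether--Fano inequalities should constrain the centres of maximal singularities on a del Pezzo fibration of degree $\neq 3$ (or on a Fano threefold) to a bounded family of curves, hence of bounded genus, whereas the cubic-surface fibration admits links along trisections of arbitrarily high genus. I expect the main obstacle to be precisely this uniform control: proving that, as $\varphi$ runs over all of $\Bir(X)$ and the Sarkisov factorisation runs through all intermediate Mori fibre spaces, the centres of the links stay in a family of bounded gonality (for \ref{NotConicBundleGonalityBounded}) and of bounded genus away from degree $3$ (for \ref{NotConicBundleNotDp3GenusBounded}). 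This rests on the boundedness of the relevant Mori fibre spaces and their extremal contractions, together with a precise classification, degree by degree, of the curves that can be centres of maximal singularities --- the degree-$3$ del Pezzo fibration being the single exception that manufactures centres of unbounded genus but bounded gonality.
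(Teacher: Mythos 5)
Your global architecture (reduce via the MMP to a Mori fibre space, factor every element of $\Bir(X)$ into Sarkisov links, get unboundedness from explicit constructions and boundedness from control of the links) is the same as the paper's, and your part~\ref{ConicBundleDim3Unbounded} is essentially the paper's proof: an involution of the generic conic degenerating along a curve $\Gamma$ in the base, with Tsen's theorem guaranteeing triviality over $\Gamma$ when $\dim B=2$ (Corollary~\ref{Coro:SectionContracts} and Proposition~\ref{Prop:ConicBundlesUnbounded}). However, your construction for part~\ref{DelPezzo3GenusUnbounded} fails. There is no ``untwisting'' of a cubic surface fibration centred along a trisection: by Iskovskikh's classification of links between rank-one del Pezzo surfaces over a non-closed field --- the very result the paper invokes in Lemma~\ref{Lemm:TwotypesII} --- a link of type $\mathrm{II}$ on a degree-$3$ fibration has centre of degree $1$ or $2$ over the base, never $3$. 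Concretely, blowing up in each fibre the three (collinear) points cut out by a line of the ambient $\p^3$ produces a surface with $K^2=0$, which is not a weak del Pezzo surface; so the middle variety of your would-be link is not relatively weak Fano over $B$, contradicting the structure of Sarkisov diagrams (Remark~\ref{Rem:WeakFanoTop}). The paper instead constructs $2$-sections of arbitrarily large genus --- after passing to Corti's standard model and blowing up a section, it takes $\hat Q=D_1\cap D_2$ for general $D_1,D_2\in|-K_{\hat X}+n\hat F|$, with genus growing in $n$ by adjunction --- and applies the fibrewise Bertini involution attached to the two points of $\hat Q$ in each fibre (Proposition~\ref{Prop:DP3GenusNotBounded}). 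Your trisection cannot be repaired into such a construction; the degree-$2$ centre is forced.

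There is a second genuine gap in parts~\ref{NotConicBundleGonalityBounded}--\ref{NotConicBundleNotDp3GenusBounded}: you dismiss vertical centres with the parenthetical claim that ``a vertical centre lies in a single del Pezzo fibre and yields only rational $C$.'' That is unjustified, and it is exactly the delicate point. A special fibre of a terminal del Pezzo fibration is birational to $\p^1\times C$, but it is \emph{not} known that $C$ is rational, nor even of bounded genus; the paper records this explicitly as an open problem (Question~\ref{Ques:Unboundedfibres}). The paper's Lemma~\ref{Lemm:g0c0moregeneral} is designed precisely to route around this: it groups each maximal chain of type-$\mathrm{II}$ links over the curve into a single square birational map $\nu$, observes that any fibre contracted by $\nu$ must be a fibre of the first or last fibration in the chain, and controls those fibres either because that fibration is $X/B$ itself or because the adjacent link has base of dimension $0$, in which case the fibres are bounded by the FT/weak-Fano boundedness results of the appendix (Lemma~\ref{Lemm:BoundDim0}\ref{Fibres}, resting on Birkar-type boundedness, not merely ``boundedness of Fano threefolds''). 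Without this grouping argument, or some substitute for it, bounding the horizontal degree of centres does not bound the genus or gonality of all contracted surfaces, and your proof of parts~\ref{NotConicBundleGonalityBounded} and~\ref{NotConicBundleNotDp3GenusBounded} is incomplete.
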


We can generalise the above notions to higher dimensions. If $X$ is a smooth projective variety of dimension $d\ge 3$, every irreducible hypersurface contracted by an element of $\Bir(X)$ is birational to $\p^1\times S$ for some variety $S$ of dimension $d-2$. When $d\ge 4$, $\dim(S)\ge 2$ and there are then many ways to study the complexity of this variety. One possibility is the covering gonality $\covgon(S)$ of $S$, namely the smallest integer $c$ such that through a general point of $S$ there is an irreducible curve $\Gamma\subseteq S$ birational to a smooth curve of gonality $\le c$. As before, we say that \emph{the covering gonality of $\Bir(X)$ is bounded} if the covering gonality of the irreducible varieties $S$ such that a hypersurface contracted by an element $\Bir(X)$ is birational to $S\times \p^1$ is bounded. Since the covering gonality of a smooth curve is its gonality, this notion is the same as the gonality defined above, in the case of threefolds. As in dimension $3$, this is again a birational invariant.

In Corollary~\ref{corollary:solid-any-dimensions}, we prove  that if $X$ is a solid Fano variety (see \cite[Definition~1.4]{AhmadinezhadOkada}),
then the covering gonality of elements of $\Bir(X)$ are bounded by a constant that depends only on $\mathrm{dim}(X)$.
In particular, the covering gonality of birational selfmaps of birationally rigid Fano varieties of dimension $n$ (see \cite{CheltsovUMN}, \cite{PukhlikovAMS}
or \cite[Definition~3.1.1]{CheltsovShramov2015CRC}) are bounded by a constant that depends only on $n$.

In Proposition~\ref{Prop:ConicBundlesUnbounded}, we prove that if $\pi\colon X\to B$ is a trivial conic bundle of any dimension $\ge 3$, then the covering
gonality of the elements of
\[
\Bir(X/B)=\{\varphi\in \Bir(X)\mid \pi \circ
\varphi=\pi\}\subseteq \Bir(X)
\]
is unbounded. This raises the following two
questions:
\begin{question}
Let $B$ be a projective variety of dimension $\ge 3$ and let $X\to B$ be a non-trivial conic bundle. Is the covering gonality of elements of $\Bir(X/B)$ unbounded?
\end{question}
\begin{question}
Let $X$ be a projective variety of dimension $\ge 4$ that is not birational to a conic bundle. Is the covering gonality of elements of $\Bir(X)$ bounded?
\end{question}

A rough idea of the proof of Theorem~\ref{MainTheorem} is as follows. Since the boundedness of the genus and gonality  is a birational invariant, we can run the MMP and replace $X$ with a birational model (with terminal singularities) such that either $K_X$ is nef or $X$ has a Mori fibre space structure $X/B$. In the  former case any birational self-map is a pseudo-automorphism \cite[Lemma~3.4]{Hanamura1987} and so the genus and gonality are bounded in this case. If  $X/B$ is a Mori fibre space, then any birational map $X\dashrightarrow X$ is a composition of Sarkisov links (see Sect.~\ref{Sect:Sarkisov}).
If a link involves a Mori fibre space $X_i/B_i$ which is (generically) a conic bundle, then we apply an  explicit construction of Sect.~\ref{Sect:conic-bundle} to get unboundedness (and thus obtain Theorem \ref{MainTheorem}\ref{ConicBundleDim3Unbounded}).  If a link involves Mori fibre spaces $X_i/B_i$ and
$X_{i+1}/B_{i+1}$, then we use the boundedness result for Fano threefolds \cite{KMMT-2000} (see also \cite{BirkarS}).
This result is also used to prove the assertions \ref{MainTheorem}\ref{NotConicBundleGonalityBounded}-\ref{NotConicBundleNotDp3GenusBounded} (see Lemma~\ref{Lemm:g0c0moregeneral}). The unboundeness of the genus for del Pezzo fibrations of degree $3$ (Theorem \ref{MainTheorem}\ref{DelPezzo3GenusUnbounded}) is obtained by finding $2$-sections of large genus and applying Bertini involutions associated to these curves, see Section~\ref{Sec:DP3} for the detailed construction.

\subsection{Acknowledgements}
This research was supported through the programme ``Research in Pairs'' by the Mathematisches Forschungsinstitut Oberwolfach in 2018. J\'er\'emy Blanc acknowledges support by the Swiss National Science
Foundation Grant ``Birational transformations of threefolds'' 200020\_178807.
Ivan Cheltsov and Yuri Prokhorov were partially supported by the Royal Society grant No. IES$\backslash$R1$\backslash$180205, and the Russian Academic Excellence Project 5-100.
Alexander Duncan was partially supported by National Security Agency grant H98230-16-1-0309.
We thank Serge Cantat, St\'ephane Lamy and Egor Yasinsky for interesting discussions during the preparation of this text.
We also thank the anonymous referee for providing thoughtful comments that resulted in changes to the revised version of the paper.

\section{The case of conic bundles}\label{Sect:conic-bundle}
Every conic bundle is square birational equivalent to a conic bundle that can be seen as a conic in a (Zariski locally trivial) $\p^2$-bundle. Using a rational section of the $\p^2$-bundle, one can do the following construction:

\begin{construction}
Let $\pi\colon Q\to B$ be a conic bundle over an irreducible normal variety $B$ and let $Q_\eta$ be its generic fibre.
Then $Q_\eta$ is a conic over the function field $\C(B)$. The anicanonical linear system $|-K_{Q_\eta}|$ defines
an embedding $Q_\eta \hookrightarrow \mathbb P^2_{\C(B)}$. Fix a $\C(B)$-point $s_\eta\in \mathbb P^2_{\C(B)}\setminus Q_\eta$.  The projection $\mathrm{pr}\colon Q_\eta\to \mathbb P^1_{\C(B)}$ from $s_\eta$ is a double cover. Let $\iota_\eta\colon Q_\eta \to Q_\eta$ be the corresponding Galois involution.  It induces a fibrewise  birational involution $\iota\colon Q\dashrightarrow Q$.

Suppose now that our conic bundle $\pi\colon Q\to B$ is embedded into a $\mathbb P^2$-bundle $\hat \pi\colon P\to B$ and suppose that we are given a section $s\colon B \to P$ whose image is not contained in $Q$. This section defines a point
$s_\eta\in \mathbb P^2_{\C(B)}\setminus Q_\eta$ and therefore defines an involution $\iota\colon Q\dashrightarrow Q$ as above.
\end{construction}
\begin{lemma}{\cite[Lemma 15]{BlancLamy15}}\label{Lemm:ConicBundleInvolution}
If, in the above notation, $\Gamma\subseteq B$ is an irreducible hypersurface that is not contained in the discriminant locus of $\pi$ and such that $s(\Gamma)\subseteq Q$, the hypersurface $V=\pi^{-1}(\Gamma)$ of $Q$ is contracted by $\iota$ onto the codimension $2$ subset $s(\Gamma)$.
\end{lemma}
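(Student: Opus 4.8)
The plan is to reduce the statement to a computation on a single general fibre of $\pi$ over $\Gamma$, and then to understand how the fibrewise involution of the Construction degenerates when the centre of projection moves onto the conic. Contraction of $V$ onto $s(\Gamma)$ is a statement about the image of a general point of $V=\pi^{-1}(\Gamma)$, and $V$ is swept out by the fibres $Q_b$ with $b\in\Gamma$; so it suffices to show that for general $b\in\Gamma$ the map $\iota$ restricts to a rational map $Q_b\dashrightarrow Q_b$ that contracts the whole fibre to the single point $s(b)$. The two hypotheses are exactly what make this meaningful: since $\Gamma$ is not contained in the discriminant, $Q_b$ is a smooth conic for general $b\in\Gamma$; and since $s(\Gamma)\subseteq Q$, the point $s(b)$ now lies \emph{on} that conic, whereas for the generic fibre of $\pi$ the centre $s_\eta$ lies off the conic.

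First I would work over the local ring $R=\OOO_{B,\eta_\Gamma}$ at the generic point $\eta_\Gamma$ of $\Gamma$, which is a discrete valuation ring because $B$ is normal and $\Gamma$ has codimension one. Over the fraction field this is precisely the situation of the Construction: $s_\eta\notin Q_\eta$, and $\iota_\eta$ is the genuine degree-two Galois involution of the projection $\mathrm{pr}\colon Q_\eta\to\mathbb P^1$ from $s_\eta$. The section $s$ gives an $R$-point of the $\mathbb P^2$-bundle whose reduction modulo the maximal ideal lands on the special conic. I would then choose coordinates on the $\mathbb P^2$-fibre so that the conic is $xz=y^2$, parametrised by $t\mapsto[t^2:t:1]$, with the centre specialising to the point $t=0$; the relevant situation is modelled by taking the centre to be $[0:\varepsilon:1]$, where $\varepsilon$ is a uniformiser, which lies off the conic exactly when $\varepsilon\neq 0$ and on it when $\varepsilon=0$.

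The core computation is then to project the conic from $[0:\varepsilon:1]$ and read off the involution in the parameter $t$. A direct intersection computation gives $\iota\colon t\mapsto \varepsilon t/(t-\varepsilon)$, which is visibly an involution, and whose reduction modulo $\varepsilon$ (equivalently, its limit as the centre moves onto the conic) is the constant map sending every $t$ to $0$, that is, to the point $s(b)$. This is the precise degeneration phenomenon: projection of a smooth conic from an exterior point is two-to-one, but from a point on the conic it drops to a birational map, and the associated Galois involution collapses to the constant map at the centre. Hence the restriction of $\iota$ to the general fibre $Q_b$ contracts it to $s(b)$.

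Finally I would globalise: as $b$ varies over $\Gamma$ the contracted fibres $Q_b$ sweep out $V$, while their images sweep out $s(\Gamma)$, so $\iota$ contracts $V$ onto $s(\Gamma)$. Since $\Gamma$ has codimension one in $B$ and $s$ is a section, $s(\Gamma)$ has dimension $\dim\Gamma=\dim Q-2$ and hence codimension two in $Q$, as claimed. The main obstacle is that $\iota$ is only birational, not a morphism, so the phrase ``contracted onto'' must be justified with care: one has to control the indeterminacy of $\iota$ along $V$ and argue that the image of a general point of $V$ is the corresponding point $s(b)$, rather than being obscured by blown-up loci. The DVR formulation above is the cleanest way to handle this, since it packages the ``limit'' as an honest reduction on the special fibre; alternatively one could resolve $\iota$ by passing to the graph of the projection and verify the claim on that model.
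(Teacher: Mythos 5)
The paper does not actually prove this lemma: it is imported verbatim, with citation, from \cite[Lemma 15]{BlancLamy15}, so there is no internal argument to compare yours against, and it must be judged on its own merits. Your strategy is the right one --- localise at the generic point of $\Gamma$, so that everything lives over the DVR $R=\mathcal{O}_{B,\eta_\Gamma}$, write the fibrewise involution there, and show that its reduction to the special fibre is the constant map at $s(\eta_\Gamma)$ --- and your M\"obius computation $t\mapsto \varepsilon t/(t-\varepsilon)$, with rank-one reduction $\bigl(\begin{smallmatrix}0&0\\ 1&0\end{smallmatrix}\bigr)$, is correct. Moreover this reduction step is genuinely needed: fibrewise collinearity of $q$, $\iota(q)$ and $s(b)$ alone only yields $\iota(\eta_V)\in\{\eta_V,\, s(\eta_\Gamma)\}$, and it is precisely the rank-one (rather than scalar) reduction that excludes the alternative $\iota|_V=\mathrm{id}$.

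There are, however, two genuine gaps in your reduction to the normal form. First, ``choose coordinates so that the conic is $xz=y^2$'' over $R$ presupposes that $Q_R\subset\mathbb{P}^2_R$ is isomorphic to the standard conic, which forces the generic fibre $Q_\eta$ to have a $\mathbb{C}(B)$-point. That is exactly what cannot be assumed: the lemma is applied in the paper to conic bundles with no rational section, where $Q_\eta(\mathbb{C}(B))=\emptyset$, and then no such coordinates exist over $R$. The fix is to pass to the henselization or completion $\hat R$: since $\Gamma$ is not in the discriminant, the special fibre $Q_0$ is a smooth conic with the rational point $s(\eta_\Gamma)$, so Hensel's lemma gives an $\hat R$-point and hence the trivialization; one then adds the (easy but necessary) remark that contraction of the special fibre may be checked after this faithfully flat base change, which leaves $Q_0$ unchanged. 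Second, even after trivializing, not every admissible centre is equivalent to $[0:\varepsilon:1]$: the general centre is $[a:b:1]$ with $a,b\in\mathfrak{m}$ and $a\neq b^2$, and for instance $[\varepsilon:0:1]$, whose involution has irrational fixed points $\pm\sqrt{-\varepsilon}$, is not $\operatorname{PGL}_2(\hat R)$-equivalent to your model, whose fixed points $0,2\varepsilon$ are rational. Fortunately the computation does not care: for a general centre the involution is $t\mapsto (bt-a)/(t-b)$, whose reduction is again the rank-one matrix above, so you should simply run the computation in that generality rather than assert a normal form. Alternatively, both gaps disappear if you avoid trivializing altogether: projection from $s$ is the restriction to $Q$ of the fibrewise harmonic homology $v\mapsto q(s)\,v-2\,(s^{T}Av)\,s$, where $A$ is the symmetric matrix of the conic over $R$ and $q(s)=s^{T}As$; along $\Gamma$ one has $q(s)\equiv 0$, so this reduces to the rank-one map $v\mapsto -2\,(\bar s^{T}\bar A v)\,\bar s$, i.e.\ the constant map $\bar s=s(\eta_\Gamma)$ on $Q_0$ minus its tangent line at $\bar s$. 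Your final globalization and the codimension-two count are fine.
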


\begin{corollary}\label{Coro:SectionContracts}
Let $\pi\colon Q\to B$ be a conic bundle over an irreducible normal variety $B$, given by the restriction of a $\p^2$-bundle $\hat{\pi}\colon P\to B$. Let $\Gamma\subseteq B$ be an irreducible hypersurface such that the restriction of $\pi$ gives a trivial conic bundle $V=\pi^{-1}(\Gamma)\to \Gamma$. Then, there exists an involution \[\iota\in\Bir(Q/B)=\{\varphi\in \Bir(Q)\mid \pi\varphi=\pi\}\] that contracts the hypersurface $V$ onto the image of a rational section of $\Gamma\dasharrow V$.
\end{corollary}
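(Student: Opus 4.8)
The plan is to deduce Corollary~\ref{Coro:SectionContracts} from Lemma~\ref{Lemm:ConicBundleInvolution} by producing a section $s\colon B\to P$ of the ambient $\p^2$-bundle satisfying the two hypotheses of the lemma, namely that $s(\Gamma)\subseteq Q$ and that $s(\Gamma)$ is not contained in the discriminant locus of $\pi$. The triviality of the restricted conic bundle $V=\pi^{-1}(\Gamma)\to\Gamma$ means precisely that $V\to\Gamma$ has a rational section $\sigma\colon\Gamma\dashrightarrow V$, since a conic bundle over $\Gamma$ is isomorphic to $\p^1\times\Gamma$ (generically) exactly when it possesses a rational point over the function field $\C(\Gamma)$, equivalently a rational section. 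The generic point of $\Gamma$ thus avoids the discriminant, and the image of $\sigma$ lands in $Q$ by construction; so the geometric content I must supply is an extension of $\sigma$ to a section of the \emph{ambient} $\p^2$-bundle $P\to B$ that still restricts correctly over $\Gamma$.

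First I would choose a rational section $\sigma\colon\Gamma\dashrightarrow V\subseteq P|_\Gamma$ coming from the trivialisation of $V\to\Gamma$, and observe that, since $P\to B$ is a Zariski-locally-trivial $\p^2$-bundle, we may extend $\sigma$ to a rational section $s\colon B\dashrightarrow P$ of the whole bundle: working on a locally trivial chart $P|_U\cong U\times\p^2$, the section $\sigma$ is given over $\Gamma\cap U$ by a rational map to $\p^2$, and any rational extension of this data to $U$ defines a rational section of $P|_U$; gluing (or simply working birationally, which is all that is needed since we only want a birational involution $\iota$) yields the desired $s$. The key point is that the construction preceding Lemma~\ref{Lemm:ConicBundleInvolution} only requires a section defined over the generic point, so a rational section $s$ suffices to define $\iota\in\Bir(Q/B)$, and we are free to arrange $s(\Gamma)\subseteq Q$ by taking $s$ to agree with $\sigma$ over $\Gamma$.

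With such an $s$ in hand, I would verify the two hypotheses of Lemma~\ref{Lemm:ConicBundleInvolution}. The condition $s(\Gamma)\subseteq Q$ holds because $s$ extends $\sigma$ and $\sigma(\Gamma)\subseteq V\subseteq Q$. The condition that $\Gamma$ is not contained in the discriminant locus of $\pi$ holds because $V\to\Gamma$ is a trivial (hence smooth) conic bundle, so its generic fibre is a smooth conic and the generic point of $\Gamma$ does not meet the discriminant. Applying the lemma then gives that $V=\pi^{-1}(\Gamma)$ is contracted by $\iota$ onto the codimension-$2$ subset $s(\Gamma)=\sigma(\Gamma)$, which is exactly the image of the rational section $\sigma\colon\Gamma\dashrightarrow V$. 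Finally, one checks that $\iota$ is an involution lying in $\Bir(Q/B)$: this is immediate from the construction, as $\iota_\eta$ is the Galois involution of the degree-$2$ projection $\mathrm{pr}\colon Q_\eta\to\p^1_{\C(B)}$ and hence squares to the identity and commutes with $\pi$.

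The main obstacle I anticipate is the extension step: making sure that a rational section $\sigma$ of the sub-conic-bundle $V\to\Gamma$ genuinely extends to a rational section $s$ of the ambient $\p^2$-bundle over all of $B$, with controlled behaviour over $\Gamma$. This is where the Zariski-local-triviality of $\hat\pi\colon P\to B$ is essential, since it reduces the problem to extending a $\p^2$-valued rational map from the hypersurface $\Gamma$ to the variety $B$, which is always possible birationally. Once this extension is secured, everything else is a direct invocation of Lemma~\ref{Lemm:ConicBundleInvolution} together with the elementary observation that triviality of the conic bundle over $\Gamma$ is equivalent to the existence of the rational section whose image will be the locus onto which $V$ is contracted.
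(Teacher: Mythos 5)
Your overall strategy is exactly the paper's: use triviality of $V\to\Gamma$ to get a rational section $s_\Gamma\colon\Gamma\dashrightarrow V$, extend it to a rational section $s\colon B\dashrightarrow P$ using local triviality of the $\p^2$-bundle, and invoke Lemma~\ref{Lemm:ConicBundleInvolution}. However, there is one genuine gap: you never arrange, nor verify, that the image of the extended section $s$ is \emph{not contained in} $Q$. This is not an optional remark but part of the hypotheses of the lemma, since ``in the above notation'' refers to the Construction, which requires a section $s\colon B\to P$ whose image avoids $Q$ generically, so that it defines a point $s_\eta\in\p^2_{\C(B)}\setminus Q_\eta$. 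The involution $\iota_\eta$ is the Galois involution of the projection from $s_\eta$, and this projection is a double cover precisely because $s_\eta$ lies \emph{off} the conic; if $s_\eta\in Q_\eta$, the projection from $s_\eta$ is birational (degree $1$), there is no Galois involution, and the whole construction collapses. Your assertion that ``any rational extension of this data to $U$ defines a rational section'' is true as far as it goes, but insufficient: an unlucky extension could have image inside $Q$ (nothing in your argument rules this out), and then the lemma simply does not apply.

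The repair is short, and it is exactly the clause the paper inserts: choose the extension so that its image is not contained in $Q$. This is possible because, on a trivializing chart $\hat\pi^{-1}(U)\cong U\times\p^2$, the locus $Q\cap\hat\pi^{-1}(U)$ is a proper closed subvariety of relative dimension $1$, while extensions of the $\p^2$-valued rational map $s_\Gamma|_{\Gamma\cap U}$ to $U$ form a large family in which one can freely modify the value at the generic point of $U$ (e.g.\ by adding a perturbation vanishing along $\Gamma$); a general such extension therefore meets the generic fibre $\p^2_{\C(B)}$ outside $Q_\eta$ while still agreeing with $s_\Gamma$ over $\Gamma$. With that clause added, your verification of the two explicitly stated hypotheses ($s(\Gamma)\subseteq Q$ and $\Gamma$ not in the discriminant) is correct and the rest of your argument goes through as in the paper.
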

\begin{proof}
Since the restriction of $\pi$ gives a trivial conic bundle $V=\pi^{-1}(\Gamma)\to \Gamma$, there is a rational section $s_\Gamma\colon \Gamma\dasharrow V\subseteq Q\subseteq P$. We then extend this section to a rational section $s\colon B\dasharrow P$ whose image is not contained in $Q$. This can be done locally, on an open subset where $P\to B$ is a trivial $\p^2$-bundle. Lemma~\ref{Lemm:ConicBundleInvolution} provides an involution $\iota\in \Bir(Q/B)$ that contracts $V$ onto the image of $s_\Gamma$.
\end{proof}

We can now give the proof of Theorem~\ref{MainTheorem}\ref{ConicBundleDim3Unbounded} (and of the small generalisation to higher dimensions mentioned in the introduction):
\begin{proposition}\label{Prop:ConicBundlesUnbounded}
Let $B$ be a projective variety of dimension $\ge 2$, let $\pi\colon Q\to B$ be a conic bundle and let us assume that either $\pi$ is trivial \textup(admits a rational section\textup) or that $\dim(B)=2$. Then, the covering gonality \textup(and the genus if $\dim(Q)=3)$ of elements of $\Bir(Q/B)$ is unbounded.
\end{proposition}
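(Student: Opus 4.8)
The plan is to produce, for every $n$, an involution $\iota_n\in\Bir(Q/B)$ that contracts a hypersurface birational to $\Gamma_n\times\p^1$, where $\Gamma_n\subseteq B$ is an irreducible hypersurface whose covering gonality (respectively genus, when $\dim Q=3$) tends to infinity with $n$. Since the covering gonality and genus of $\Bir(Q/B)$ are birational invariants preserved under square birational equivalence, and since neither the triviality of $\pi$ nor $\dim B$ is affected by such an equivalence, I may use the reduction recalled at the beginning of this section to assume that $Q$ is a conic inside a Zariski locally trivial $\p^2$\nobreakdash-bundle $\hat\pi\colon P\to B$; in particular Corollary~\ref{Coro:SectionContracts} is available.

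First I fix a very ample divisor $A$ on $B$ and let $\Gamma_n$ be a general member of $|nA|$. By Bertini, $\Gamma_n$ is irreducible and smooth, and for $n$ large it is not contained in the discriminant of $\pi$ (a fixed proper closed subset), so $V_n:=\pi^{-1}(\Gamma_n)\to\Gamma_n$ is a conic bundle. The next step is to verify that $V_n\to\Gamma_n$ is \emph{trivial}, so that $V_n$ is birational to $\Gamma_n\times\p^1$ and Corollary~\ref{Coro:SectionContracts} produces an involution $\iota_n\in\Bir(Q/B)$ contracting $V_n$ onto the image of a rational section of $\Gamma_n\dashrightarrow V_n$. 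There are two cases matching the two hypotheses. If $\pi$ is trivial, a rational section of $\pi$ is defined at a general point of $\Gamma_n$ with image in $Q$, hence restricts to a rational section of $V_n\to\Gamma_n$. If instead $\dim B=2$, then $\Gamma_n$ is a curve, and the generic fibre of $V_n\to\Gamma_n$ is a conic over $\C(\Gamma_n)$; this is the function field of a curve over an algebraically closed field, hence a $C_1$ field, so by Tsen's theorem the conic has a rational point and $V_n\to\Gamma_n$ is again trivial.

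It remains to show that the relevant invariant of $\Gamma_n$ is unbounded. Since $V_n$ is a hypersurface contracted by $\iota_n$ that is birational to $\Gamma_n\times\p^1$, it is of the form $S\times\p^1$ with $S=\Gamma_n$, so $\covgon(\iota_n)\ge\covgon(\Gamma_n)$ (and $g(\iota_n)\ge g(\Gamma_n)$ in the curve case). When $\dim Q=3$ the curve $\Gamma_n$ has genus governed by adjunction on the surface $B$, namely $2g(\Gamma_n)-2=\Gamma_n\cdot(K_B+\Gamma_n)=nA\cdot(K_B+nA)$, which grows like $n^2A^2\to\infty$; this gives the genus statement. For the covering gonality in general, I would invoke a positivity criterion: by adjunction $K_{\Gamma_n}=(K_B+nA)|_{\Gamma_n}$ becomes arbitrarily positive, hence is $p$\nobreakdash-very ample with $p=p(n)\to\infty$, and the lower bound of Bastianelli--De Poi--Ein--Lazarsfeld--Ullery for covering gonality in terms of the positivity of the canonical bundle yields $\covgon(\Gamma_n)\to\infty$. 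Combining, $\covgon(\iota_n)\to\infty$, so the covering gonality (and the genus when $\dim Q=3$) of $\Bir(Q/B)$ is unbounded.

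The main obstacle is precisely this last step. The genus case is immediate from adjunction, and the triviality verifications (restricting a section, or applying Tsen over a curve) are routine once the setup is in place. By contrast, the unboundedness of the covering gonality of the $\Gamma_n$ is not elementary: it genuinely requires a positivity input, routed here through the $p$\nobreakdash-very ampleness of $K_{\Gamma_n}$ and the resulting intrinsic lower bound on $\covgon(\Gamma_n)$, since covering gonality cannot be controlled by a naive degree count.
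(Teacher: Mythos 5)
Your proof is correct and follows essentially the same route as the paper's: reduce to a conic embedded in a $\p^2$-bundle, take $\Gamma_n$ a general member of $|nA|$, obtain triviality of $\pi^{-1}(\Gamma_n)\to\Gamma_n$ either by restricting the rational section or by Tsen's theorem when $\dim(B)=2$, and contract $\pi^{-1}(\Gamma_n)$ by Corollary~\ref{Coro:SectionContracts}. The only difference is that you spell out the final unboundedness step---adjunction for the genus and the Bastianelli--De Poi--Ein--Lazarsfeld--Ullery bound via $p$-very ampleness of $K_{\Gamma_n}$ for the covering gonality---which the paper leaves as the parenthetical ``embed $B$ in a projective space and take a general hypersurface of large degree''; the one point to add is that these computations need $\Gamma_n$ smooth, so one should first replace $B$ by a resolution of singularities, which is harmless by the same square-birational invariance you invoke at the start.
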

\begin{proof}We can assume that $\pi$ is the restriction of a $\p^2$-bundle $\hat{\pi}\colon P\to B$.

Let $\Gamma\subseteq B$ be an irreducible hypersurface which is not contained in the discriminant locus of $\pi$. Then the restriction of $\pi$ gives a conic bundle $\pi_\Gamma\colon V=\pi^{-1}(\Gamma)\to \Gamma$. If $\pi$ is a trivial conic bundle, then so is $\pi_\Gamma$. If $\dim(B)=2$, then $\Gamma$ is a curve, and $\pi$ is again a trivial conic bundle by Tsen's Theorem \cite[Corollary 6.6.2 p.~232]{Kollar}. In both cases, we can apply Corollary~\ref{Coro:SectionContracts} to find an element of $\Bir(Q/B)$ that contracts the hypersurface $V\subseteq Q$, birational to $\p^1\times \Gamma$.

This can be done for any irreducible hypersurface of $B$ not contained in the discriminant locus.
Thus, the covering gonality of elements of $\Bir(Q/B)$ are unbounded (to see this, simply embed $B$ in a projective space and take a general hypersurface of large degree).  The same argument applies to the genus when $\dim(Q)=3$.
\end{proof}

We recall the following classical result:
\begin{lemma}\label{Lemm:Dp4ConicBundle}
Let $B$ be a projective curve. A del Pezzo fibration $X/B$ of degree $\ge 4$ is birational to a conic bundle $X'/B'$.
\end{lemma}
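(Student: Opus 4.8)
The plan is to reduce everything to the generic fibre $X_\eta$ of the fibration $X\to B$, which (in characteristic $0$) is a smooth del Pezzo surface of degree $d\ge 4$ over the field $K=\C(B)$. The point is that any $K$-rational conic bundle structure on $X_\eta$, i.e.\ a dominant rational map $X_\eta\dashrightarrow C$ to a curve $C$ over $K$ whose generic fibre is a conic, globalises: the curve $C/K$ is the generic fibre of a surface $B'$ equipped with a fibration $B'\to B$ with generic fibre $\p^1$, and the fibration $X_\eta\dashrightarrow C$ spreads out to a dominant rational map $X\dashrightarrow B'$ with conic generic fibre. Resolving it produces a conic bundle $X'\to B'$ with $X'$ birational to $X$ and $\dim B'=2$, which is exactly the asserted model. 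Since $K=\C(B)$ is a $C_1$-field by Tsen's theorem, the strategy is to manufacture such a structure using the $C_1$ property.

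First I would produce a $K$-rational point on $X_\eta$. For $d\ge 5$ this is classical, and moreover a del Pezzo surface of degree $\ge 5$ with a $K$-point is $K$-rational (Enriques--Manin); hence $X_\eta$ is $K$-birational to $\p^2_K$, which carries the obvious conic bundle structure given by the pencil of lines $\p^2_K\dashrightarrow\p^1_K$, and the lemma follows in this range. For $d=4$ I would use the anticanonical model $X_\eta=Q_1\cap Q_2\subseteq\p^4_K$ as the base locus of a pencil of quadrics: since $K$ is $C_1$, a system of two quadratic forms in the five variables has a common nontrivial zero (the numerical condition $5>2+2$ holds), so $X_\eta$ has a $K$-point.

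It then remains to supply a $K$-rational conic bundle structure when $d=4$. As a del Pezzo surface of degree $4$ with a $K$-point is unirational, its $K$-points are Zariski dense, so I may pick a $K$-point $p$ lying on none of the sixteen lines of $X_\eta$ over $\bar K$. Blowing up $p$ yields a del Pezzo surface $S_1$ of degree $3$ (a cubic surface) on which the exceptional curve $E_p$ is a line defined over $K$. Projecting the cubic surface from this line, i.e.\ taking the pencil of planes through $E_p$ and removing $E_p$ from each plane section, gives a morphism $S_1\dashrightarrow\p^1_K$ whose generic fibre is a conic. This is a conic bundle structure over $K$, and since $S_1$ is $K$-birational to $X_\eta$, it is what we want.

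The main obstacle is precisely this degree-$4$ case. A del Pezzo surface of degree $4$ need not be $K$-rational and need not carry a Galois-invariant conic class, so one cannot in general descend a conic bundle structure from $\bar K$. The device that bypasses this difficulty is the blow-up of a rational point: it creates a line defined over $K$ on a degree-$3$ del Pezzo and thereby a conic bundle structure with no rationality input whatsoever. Two routine checks must be carried out carefully: that $p$ can be chosen off all the lines so that $S_1$ is an honest del Pezzo of degree $3$ (rather than a weak one with a $(-2)$-curve), and that the fibrewise construction globalises to a genuine conic bundle $X'\to B'$ over a surface $B'$ that fibres over $B$ with generic fibre $\p^1$.
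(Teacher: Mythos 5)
Your proof is correct and follows essentially the same route as the paper: pass to the generic fibre over $\C(B)$, use Tsen's theorem (the $C_1$ property) to produce a rational point, dispose of degree $\ge 5$ by rationality, and in degree $4$ blow up a point avoiding the $16$ lines to obtain a cubic surface whose conic bundle structure comes from the exceptional line. The only real differences are cosmetic: the paper first runs an MMP over $B$ so that $\rk\Pic(F)=1$ forces the rational point off the $(-1)$-curves (where you instead invoke unirationality of the degree-$4$ surface and Zariski density of its rational points), and it identifies the conic bundle as the second Mori contraction of the blow-up via Iskovskikh's classification, rather than exhibiting it explicitly as the projection from the line.
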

\begin{proof}
The generic fibre   of $X/B$ is a del Pezzo surface $F$ of degree $d\ge 4$ over the function field $\C(B)$. Applying MMP over $B$, we can assume that the generic fibre satisfies $\operatorname{rk} \operatorname{Pic}(F)=1$ or has a structure of conic bundle. In the latter case, the proof is over, so we assume that  $\operatorname{rk} \operatorname{Pic}(F)=1$. It is sufficient to show that $F$ birationally has a conic bundle structure $F\to C$, where $C$ is a curve defined over $\C(B)$; this is for instance the case if $F$ is rational. As $B$ is a curve, the field $\C(B)$ has the $\mathrm{C}_1$ property,
so $F$ has a rational $\C(B)$-point $x\in F$ (see \cite[Theorem IV.6.8, page 233]{Kollar}).

If $d=9$, the existence of $x$ implies that $F$ is isomorphic to $\p^2$.  If $d=8$, the fact that  $\operatorname{rk} \operatorname{Pic}(F)=1$ implies that $F$ is isomorphic to a smooth quadric in $\p^3$, and the projection from $x$ gives a birational map to $\p^2$. We cannot have $d=7$, as the unique $(-1)$-curve of $F_{\overline{\C(B)}}$ would be defined over $\C(B)$, contradicting the assumption $\operatorname{rk} \operatorname{Pic}(F)=1$.

It remains to study the cases where $d\in \{4,5,6\}$.  Since $\operatorname{rk} \operatorname{Pic}(F)=1$, the $\C(B)$-rational point $x\in F$ does not lie on a $(-1)$-curve. Therefore, by blowing-up $x\in F$ we obtain a del Pezzo surface $Y$ over $\C(B)$ of degree $d-1$ with $\operatorname{rk} \operatorname{Pic}(Y)=2$. Thus on $Y$ there exists a Mori contraction $Y\to Y'$ which is different from $Y\to F$. The type of $Y\to F$ can be computed explicitely (see \cite[Theorem~2.6]{IskovskikhFactorisation}):
 If $d=5$ (resp. $d=6$), then  $Y\to Y'$ is a birational contraction to $\mathbb P^2_{\C(B)}$, (resp. to a quadric in $\mathbb P^3_{\C(B)}$ having a rational point), so $F$ is again rational. If $d=4$, then $Y'\simeq \mathbb P^1_{\C(B)}$ and $Y\to Y'$ is a conic bundle.
This proves our lemma.
\end{proof}

\section{Reminders on the Sarkisov program}
\label{Sect:Sarkisov}
\begin{definition} \label{def:MorFibreSpace}
A variety $X$ with a surjective morphism $\eta\colon X\to B$ is a \emph{Mori fibre space} if the following conditions hold:
\begin{enumerate}
\item \label{def:fib_eta} $\eta$ has connected fibres, $B$ is normal, $\dim X > \dim B \ge 0$ and the relative Picard rank $\rho(X/B)=\rho(X)-\rho(B)$ is equal to $1$;
\item \label{def:fib_sing} $X$ is $\QQ$-factorial with at most terminal singularities;
\item \label{def:fib_big} The anticanonical divisor $-K_X$ is $\eta$-ample.
\end{enumerate}
The Mori fibre space is denoted by $X/B$.

An isomorphism of fibre spaces $X/B\iso X'/B'$ is an isomorphism $\varphi\colon X\to X'$ that sits in a  commutative diagram
\[
 \xymatrix{
 X\ar[d]\ar[rr]^{\varphi}&&X'\ar[d]
 \\
B \ar[rr]^{\psi}&&B'
 }
\]
where $\psi\colon B\iso B'$ is an isomorphism.
\end{definition}
\begin{remark} In the case that we study, namely when $\dim(X)=3$, we obtain three possible cases for a Mori fibre space $X/B$:

\begin{enumerate}
\item
If $\dim(B)=0$, then $X$ is a Fano variety of Picard rank $1$;
\item
If $\dim(B)=1$, then $X$ is a del Pezzo fibration over the curve $B$;
\item
If $\dim(B)=2$, then $X$ is a conic bundle over the surface $B$.
\end{enumerate}
\end{remark}

\begin{definition}
A Sarkisov link $\chi\colon X_1\rat X_2$ between two Mori fibre spaces $X_1/B_1$ and $X_2/B_2$ is a birational map which fits into one of the following commutative diagrams.
\[
{
\def\arraystretch{2.2}
\begin{array}{cc}
\begin{tikzcd}[ampersand replacement=\&,column sep=1.3cm,row sep=0.16cm]
Y_1\ar[dd,"div",swap]  \ar[rr,dotted,-] \&\& X_2 \ar[dd,"fib"] \\ \\
X_1 \ar[uurr,"\chi",dashed,swap] \ar[dr,"fib",swap] \&  \& B_2 \ar[dl] \\
\& B_1 = Z \&
\end{tikzcd}
&
\begin{tikzcd}[ampersand replacement=\&,column sep=.8cm,row sep=0.16cm]
Y_1
\ar[dd,"div",swap]  \ar[rr,dotted,-] \&\& Y_2\ar[dd,"div"] \\ \\
X_1 \ar[rr,"\chi",dashed,swap] \ar[dr,"fib",swap] \&  \& X_2 \ar[dl,"fib"] \\
\& B_1 = Z = B_2 \&
\end{tikzcd}
\\
\mathrm{I} & \mathrm{II}
\\
\begin{tikzcd}[ampersand replacement=\&,column sep=1.3cm,row sep=0.16cm]
X_1 \ar[ddrr,"\chi",dashed,swap] \ar[dd,"fib",swap]  \ar[rr,dotted,-] \&\& Y_2\ar[dd,"div"] \\ \\
B_1 \ar[dr] \& \& X_2 \ar[dl,"fib"] \\
\& Z = B_2 \&
\end{tikzcd}
&
\begin{tikzcd}[ampersand replacement=\&,column sep=1.5cm,row sep=0.16cm]
X_1 \ar[rr,"\chi",dotted,swap] \ar[dd,"fib",swap]  \&\& X_2 \ar[dd,"fib"] \\ \\
B_1 \ar[dr] \& \& B_2 \ar[dl] \\
\& Z \&
\end{tikzcd}
\\
\mathrm{III} & \mathrm{IV}
\end{array}
}
\]
Here the dotted arrows are pseudo-isomorphisms (isomorphisms outside of codimension $\ge 2$ subsets) given by a sequence of log-flips, the plain arrows are surjective morphisms of relative Picard rank $1$, with fibres not equivalent via $\chi$ in the cases of types $\mathrm{II}$ and $\mathrm{IV}$, the arrows written {\it ``div''} are divisorial contractions, the arrows written {\it ``fib''} are Mori fiber spaces,
and the variety $Z$ is normal with at worst Kawamata log terminal singularities.  We say that the \emph{base of the Sarkisov link} is the variety $Z$ (which is dominated by, but not necessarily equal to, the bases $B_1$ and $B_2$ of the two Mori fibre spaces), and that the above diagram is the \emph{Sarkisov diagram associated to~$\chi$}.
\end{definition}

The notion of Sarkisov links is important, because of the following result.
\begin{theorem}\label{TheoremSarkisov}
Every birational map between Mori fibre spaces decomposes into a composition of Sarkisov links and isomorphisms of Mori fibre spaces.
\end{theorem}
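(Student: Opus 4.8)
The plan is to deduce this from the Minimal Model Program, following the strategy of Corti and of Hacon--McKernan. Fix a birational map $\varphi\colon X_1\dashrightarrow X_2$ between Mori fibre spaces $X_1/B_1$ and $X_2/B_2$. The first step is to attach to $\varphi$ a mobile linear system together with numerical invariants. Choose a very ample complete linear system $\mathcal H_2$ on $X_2$ and let $\mathcal H_1$ be its strict transform on $X_1$. Because $\rk\Pic(X_1/B_1)=1$, over the generic point of $B_1$ one may write $\mathcal H_1\sim_{\mathbb Q,B_1}\mu(-K_{X_1})$ for a unique rational number $\mu=\mu(X_1,\mathcal H_1)>0$; set also $c=c(X_1,\mathcal H_1)$, the canonical threshold, i.e.\ the largest $c$ for which $(X_1,c\mathcal H_1)$ is canonical. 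These, together with an auxiliary non-negative integer $e$ recording the number of crepant divisors of the pair $(X_1,\tfrac1\mu\mathcal H_1)$, assemble into the \emph{Sarkisov degree} $(\mu,c,e)$, ordered lexicographically.

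The second step is to produce a single Sarkisov link from one MMP run, governed by a dichotomy. If $c\ge 1/\mu$, then $(X_1,\tfrac1\mu\mathcal H_1)$ is canonical and, since $K_{X_1}+\tfrac1\mu\mathcal H_1\equiv_{B_1}0$, one runs a relative $(K+\tfrac1\mu\mathcal H)$-MMP over the base $Z$ of the prospective link, producing a link of type III or IV. If $c<1/\mu$, the worst singularity of $(X_1,\tfrac1\mu\mathcal H_1)$ is computed by a valuation of small discrepancy, which one realises as a divisorial contraction $Y_1\to X_1$ from a $\mathbb Q$-factorial terminal model (a step guaranteed by BCHM); running the relative MMP with scaling of $\mathcal H$ from $Y_1$ then performs flips and contractions until it terminates, either in a divisorial contraction to a new Mori fibre space (type I or II) or in a fibration. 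In every case the output is a Mori fibre space $X_1'/B_1'$ carrying the induced mobile system, and exactly one elementary link has been performed.

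The third step is termination of the global algorithm. One shows that passing from $X_1/B_1$ to $X_1'/B_1'$ strictly decreases the Sarkisov degree in the lexicographic order: $\mu$ is non-increasing, and when it stays constant a drop in the worst singularity forces $c$ up or $e$ down. By the ACC for log canonical thresholds, the pairs $(\mu,c)$ arising range over a set satisfying the descending chain condition, and $e$ is a non-negative integer; hence strict decrease forces the process to halt after finitely many links. When it stops, the strict transform of $\mathcal H_2$ has again become relatively very ample with $\mu$ minimal, which forces the terminal model to be isomorphic to $X_2/B_2$ as a Mori fibre space, so the composite of the links performed differs from $\varphi$ only by an isomorphism of Mori fibre spaces.

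The main obstacle is precisely this termination statement. Each individual MMP run terminates by BCHM together with termination of MMP with scaling, but showing that the \emph{overall} Sarkisov process halts requires the delicate input that the Sarkisov degree strictly decreases and takes values in a set with no infinite descending chains; this is where the ACC for thresholds and the finiteness of relative models do the essential work. I would isolate this monotonicity-and-boundedness statement as the key lemma, and treat the construction of the individual links in the second step as the (substantial but more routine) MMP bookkeeping organised around it.
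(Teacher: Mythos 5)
First, a point of comparison: the paper does not prove this statement at all. It is quoted as a known theorem, with the proof delegated to the literature: Castelnuovo in dimension $2$, Corti \cite{CortiSarkisov} in dimension $3$, and Hacon--McKernan \cite{HMcK} in arbitrary dimension. Your outline is a reconstruction of Corti's dimension-$3$ strategy (Sarkisov degree $(\mu,c,e)$, untwisting by links, lexicographic descent), so the honest comparison is with those two cited proofs, and against them your write-up has a genuine gap precisely at the step you yourself flag as the crux.

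The gap is the termination argument. Your claim that ``by the ACC for log canonical thresholds, the pairs $(\mu,c)$ arising range over a set satisfying the descending chain condition'' conflates two independent finiteness inputs. ACC for thresholds concerns only the second coordinate $c$: it forbids infinite strictly increasing chains of thresholds. It gives no control whatsoever over $\mu$. To forbid an infinite strictly decreasing chain of $\mu$'s one must show that the values of $\mu$ lie in a DCC set --- in practice, that their denominators are bounded --- and this is a separate, deep ingredient: in dimension $3$ Corti extracts it from boundedness statements for the Mori fibre spaces involved (index bounds for terminal Fano threefolds, del Pezzo fibrations and conic bundles, the same kind of input this paper invokes elsewhere via \cite{KMMT-2000}), not from ACC for thresholds. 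Without it, strict lexicographic decrease is perfectly consistent with an infinite chain of links. Worse, the theorem as stated (and as the paper uses it, e.g.\ in Corollary~\ref{corollary:solid-any-dimensions}) is dimension-free, and in dimension $\ge 4$ the termination of the Corti-style algorithm is exactly the step that is not known: this is why Hacon--McKernan's proof \cite{HMcK} abandons the Sarkisov degree altogether and instead deduces the decomposition from finiteness of ample models (BCHM), realizing the links as wall crossings between chambers of a polytope in a two-dimensional family of divisors. So your proposal is, at best, a sketch of the known dimension-$3$ proof with an incorrectly justified termination step, and it does not reach the statement in the generality in which the paper needs it.
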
 
In dimension $2$, this is essentially due to Castelunovo \cite{Cas}, although not stated directly in these terms. The case of dimension $3$ was done for the first time in \cite[Theorem 3.7]{CortiSarkisov}. The proof in any dimension is available in \cite[Theorem~1.1]{HMcK}.

\begin{remark}
\label{Rem:WeakFanoTop}
In fact, it follows from the definition that there are strong constraints on the sequence of anti-flips, flops and flips (that is, about the sign of the
intersection of the exceptional curves against the canonical divisor).  Precisely, as explained in \cite[Remark 3.10]{BLZ}, the top (dotted) row of a Sarkisov diagram has the following form:
\[
\begin{tikzcd}[column sep=1.5cm,row sep=0.25cm]
Y_m \ar[ddrrr]\ar[d] \ar[r,dotted,<-]  & \dots\ar[ddrr] \ar[r,dotted,<-] & Y_0 \ar[ddr] \ar[rr,dotted,<->]  \ar[rd,->] &{}& Y'_0 \ar[ddl] \ar[r,dotted,->] \ar[ld,->]& \dots \ar[ddll]\ar[r,dotted,->] & Y'_n \ar[d] \ar[ddlll]\\
{} \ar[drrr] &{}&{}&\bar{Y}\ar[d]&{}&{}& {}  \ar[dlll] \\
{}  &{}&{}&Z&{}&{}& {}
\end{tikzcd}
\]
where $Y_0 \ps Y'_0$ is a flop over $Z$ (or an isomorphism), $m,n \ge 0$, and each $Y_i \ps Y_{i+1}$, $Y'_i \ps Y'_{i+1}$ is a flip over $Z$ (or an isomorphism).
Indeed, one can decompose the pseudo-isomorphism into a sequence of log-flips and for $Y = Y_i$ or $Y=Y_i'$, a general contracted curve $C$ of the fibration $Y/Z$ satisfies $K_Y \cdot C < 0$, hence at least one of the two extremal rays of the cone $\NE(Y/Z)$ is strictly negative against $K_Y$. In particular, both $Y_0/Z$ and $Y_0'/Z$ are relatively weak Fano (or Fano if the flop is an isomorphism) over $Z$.

If $Y_0 \ps Y'_0$ is an isomorphism, we choose $\bar Y$ to be isomorphic to both; if $Y_0 \ps Y'_0$ is a flop and not an isomorphism, the map is naturally associated to a variety $\bar Y$, that is a Fano with terminal (but not $\QQ$-factorial)  singularities such that $\operatorname{rk} \operatorname{Cl}(\bar Y/Z)=2$. It is called \emph{the central model} in \cite{Shokurov-Choi-2011}. Two contractions $Y_0\to \bar Y$ and $Y_0'\to \bar Y$ are small $\QQ$-factorialisations of $\bar Y$. Hence the whole diagram is uniquely determined by $\bar Y/Z$.
\end{remark}

\begin{remark}\label{Rem:Links3Dnoconicbundle}
In the sequel, we will mostly work with varieties of dimension $3$ not birational to conic bundles, as the case of conic bundles have already been treated in Section~\ref{Sect:conic-bundle}. The Mori fibre spaces will be then either Fano of rank $1$ or del Pezzo fibrations of degree $\le 3$ (see Lemma~\ref{Lemm:Dp4ConicBundle}). As a Fano is rationally connected, a del Pezzo fibration over a base not equal to $\p^1$ is not birational to a Fano variety. All the Sarkisov links that we can have between Mori fibre spaces not birational to conic bundles are then as follows:
\[
{
\def\arraystretch{2.2}
\begin{array}{cc}
\begin{tikzcd}[ampersand replacement=\&,column sep=1.3cm,row sep=0.16cm]
Y_1\ar[dd,"div",swap]  \ar[rr,dotted,-] \&\& X_2 \ar[dd,"fib"] \\ \\
X_1 \ar[uurr,"\chi",dashed,swap] \ar[dr,"fib",swap] \&  \& \p^1 \ar[dl] \\
\& \text{point} \&
\end{tikzcd}
&
\begin{tikzcd}[ampersand replacement=\&,column sep=.8cm,row sep=0.16cm]
Y_1
\ar[dd,"div",swap]  \ar[rr,dotted,-] \&\& Y_2\ar[dd,"div"] \\ \\
X_1 \ar[rr,"\chi",dashed,swap] \ar[dr,"fib",swap] \&  \& X_2 \ar[dl,"fib"] \\
\&\text{point or curve}\&
\end{tikzcd}
\\
\mathrm{I} & \mathrm{II}
\\
\begin{tikzcd}[ampersand replacement=\&,column sep=1.3cm,row sep=0.16cm]
X_1 \ar[ddrr,"\chi",dashed,swap] \ar[dd,"fib",swap]  \ar[rr,dotted,-] \&\& Y_2\ar[dd,"div"] \\ \\
\p^1 \ar[dr] \& \& X_2 \ar[dl,"fib"] \\
\& \text{point} \&
\end{tikzcd}
&
\begin{tikzcd}[ampersand replacement=\&,column sep=1.5cm,row sep=0.16cm]
X_1 \ar[rr,"\chi",dotted,swap] \ar[dd,"fib",swap]  \&\& X_2 \ar[dd,"fib"] \\ \\
\p^1 \ar[dr] \& \& \p^1 \ar[dl] \\
\& \text{point} \&
\end{tikzcd}
\\
\mathrm{III} & \mathrm{IV}
\end{array}
}
\]
\end{remark}

\begin{lemma}
\label{Lemm:TwotypesII}
Let us consider a Sarkisov link of type $\mathrm{II}$ between three-dimensional Mori fibre spaces $X_1/B$ and $X_2/B$
over a base $B$ of dimension $1$.
\[\begin{tikzcd}[ampersand replacement=\&,column sep=.8cm,row sep=0.16cm]
Y_1
\ar[dd,"div",swap]  \ar[rr,dotted,-] \&\& Y_2\ar[dd,"div"] \\ \\
X_1 \ar[rr,"\chi",dashed,swap] \ar[dr,"fib",swap] \&  \& X_2 \ar[dl,"fib"] \\
\&B\&
\end{tikzcd}\]
Denoting by $E_i\subset Y_i$ the exceptional divisor of $Y_i/X_i$ and by $e_i\subset X_i$ its image, one of the following case holds:
\begin{enumerate}
\item\label{typeIIfibre}
$\chi$ induces an isomorphism between the generic fibres of  $X_1/B$ and $X_2/B$, and $e_i$ is contained in a fibre of $X_i/B$ for $i=1,2$.
\item\label{typeIIcurvesurj}
$\chi$ induces a birational map between the generic fibres of  $X_1/B$ and $X_2/B$ which is not an isomorphism and $e_i$ is a curve of $X_i$ such that $e_i/B$ is a finite morphism of degree $r_i\in \{1,\ldots,8\}$, for $i=1,2$.
\end{enumerate}
Moreover, in case~\ref{typeIIcurvesurj}, if one of the degree $d_i$ of the del Pezzo fibration $X_i/B$ is $\le 3$, then $d_1=d_2$ and $r_1=r_2$, and  $(d_i,r_i)\in \{(3,2),(3,1),(2,1)\}$, and the generic fibres of  $X_1/B$ and $X_2/B$ are isomorphic.
In particular, $e_i$ is birational to $B$ if $d_i= 2$.
\end{lemma}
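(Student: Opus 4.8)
The plan is to analyze the Sarkisov link of type $\mathrm{II}$ by studying what the divisorial contractions $Y_i \to X_i$ can do and how $\chi$ interacts with the fibration structure over $B$. First I would observe that, since $X_1/B$ and $X_2/B$ are Mori fibre spaces over the same curve $B$ and the link sits over the base $Z=B$, the pseudo-isomorphism $Y_1 \rat Y_2$ commutes with the projections to $B$. The generic fibre of $X_i/B$ is a del Pezzo surface $F_i$ over $\C(B)$, and the link $\chi$ restricts to a birational map $F_1 \rat F_2$ over $\C(B)$. There are exactly two possibilities: either this restriction is an isomorphism of the generic fibres, or it is not. I would set up this dichotomy as the organizing principle and show it matches cases~\ref{typeIIfibre} and~\ref{typeIIcurvesurj}.

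For case~\ref{typeIIfibre}, if $\chi$ restricts to an isomorphism on generic fibres, then the centers $e_i$ of the divisorial contractions cannot dominate $B$ (otherwise blowing them up would alter the generic fibre), so each $e_i$ lies in a closed fibre of $X_i/B$. For case~\ref{typeIIcurvesurj}, if the restriction is not an isomorphism, then the divisorial extraction must modify the generic fibre, which forces $e_i$ to dominate $B$; since $e_i$ is the center of a divisorial contraction on a terminal threefold with $\dim e_i \le 1$, and it dominates the curve $B$, it must itself be a curve $e_i$ mapping finitely onto $B$ of some degree $r_i$. The bound $r_i \le 8$ I would get by passing to the generic fibre: blowing up $Y_i \to X_i$ along $e_i$ induces, on the generic fibre $F_i$, the blow-up of the $r_i$ geometric points of $e_i$ lying over $\C(B)$; for this to produce a (weak) del Pezzo surface admitting a further contraction, these points must be in sufficiently general position, and the degree of a del Pezzo surface being positive (between $1$ and $9$) bounds the number of points one can blow up, giving $r_i \le 8$.

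The sharper claim when some $d_i \le 3$ is where the arithmetic of del Pezzo surfaces over $\C(B)$ enters, and this is the main obstacle. Here I would work entirely on the generic fibres $F_1, F_2$ over $\C(B)$, which are del Pezzo surfaces with $\rk \Pic(F_i)=1$ (since $\rho(X_i/B)=1$). The link induces a birational map $F_1 \rat F_2$ of minimal del Pezzo surfaces over $\C(B)$ that factors through the blow-up of $r_i$ points and a contraction, exactly the situation governed by the classification of Sarkisov links between del Pezzo surfaces over a non-closed field (as in the two-dimensional factorization theory cited via \cite{IskovskikhFactorisation}). I would invoke that classification to enumerate which $(d_i,r_i)$ can occur: the constraint that both $F_1$ and $F_2$ have Picard rank one over $\C(B)$, together with $d_i \le 3$, leaves only the elementary links of type $(d_i,r_i)\in\{(3,2),(3,1),(2,1)\}$, in each of which $d_1=d_2$, $r_1=r_2$, and $F_1 \cong F_2$.

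Finally, for the last assertion, when $d_i=2$ the surviving possibility is $(d_i,r_i)=(2,1)$, so $r_i=1$ means $e_i \to B$ has degree $1$, i.e.\ $e_i$ is birational to $B$. The delicate points I anticipate are verifying that the generic-fibre reduction genuinely captures the divisorial contraction (i.e.\ that the center $e_i$ dominating $B$ corresponds to blowing up honest $\C(B)$-points or Galois-orbits on $F_i$, and that $r_i$ equals the degree of $e_i/B$), and carefully matching the threefold Sarkisov link over $B$ with the two-dimensional link over $\C(B)$ so that the tabulated values $(d_i,r_i)$ transfer correctly.
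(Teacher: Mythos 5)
Your proposal is correct and follows essentially the same route as the paper: both pass to the generic fibres over $\C(B)$, use the fact that these are del Pezzo surfaces of Picard rank one to establish the dichotomy between cases \ref{typeIIfibre} and \ref{typeIIcurvesurj}, and then invoke the classification of birational maps between minimal del Pezzo surfaces over a non-closed field from \cite[Theorem~2.6]{IskovskikhFactorisation} to get $r_i\le 8$ and the list $(d_i,r_i)\in\{(3,2),(3,1),(2,1)\}$ when some $d_i\le 3$. The only cosmetic difference is that you sketch a degree-positivity heuristic for the bound $r_i\le 8$, whereas the paper reads this bound, together with the equalities $d_1=d_2$, $r_1=r_2$ and the isomorphism of generic fibres, directly off the same classification.
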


\begin{proof}
The image $e_i$ is a curve or a point, so is either $(a)$ contained in a fibre of $X_i/B$, or $(b)$ maps surjectively to $B$ via a finite morphism of degree $d_i\ge 1$. Case $(a)$ happens if and only if the generic fibres of $X_i/B$ and $Y_i/B$ are isomorphic.
As the generic fibres of $X_i/B$ are del Pezzo surfaces of rank $1$,  for $i=1,2$, case $(a)$ happens for $i=1$ if and only if it happens for $i=2$. This provides the dichotomy \ref{typeIIfibre}-\ref{typeIIcurvesurj} above.

In case~\ref{typeIIcurvesurj}, we look at the birational map between the generic fibres $X_i/B$ which are del Pezzo surfaces of degree $1$.
The classification of such maps, given in \cite[Theorem~2.6]{IskovskikhFactorisation},
implies that $r_i\le 8$ for $i=1,2$ and that if $d_i\le 3$ then $d_1=d_2$, $r_1=r_2$,  and  $(d_i,r_i)\in \{(3,2),(3,1),(2,1)\}$,
and the generic fibres of $X_1/B$ and $X_2/B$ are isomorphic.
\end{proof}

In Case~\ref{typeIIcurvesurj} in Lemma~\ref{Lemm:TwotypesII},
the case of del Pezzo fibrations of degree $3$ is the most interesting one,
as the degree $\le 2$ only gives curves $e_i$ of bounded genus, and the case of degree $\ge 4$ is covered by Lemma~\ref{Lemm:Dp4ConicBundle}.

\begin{remark}
\label{remark:fiberwise}
Let $X/B$ be a Mori fibre space such that $\dim(B)\in \{0,1\}$.
It may happen that no Sarkisov link starts from $X$.
If $\dim(B)=0$, this means (almost by definition) that $X$ is a birationally super-rigid Fano threefold.
Many examples of such Fano threefolds can be found in \cite{CheltsovPark}.
Similarly, if $\dim(B)=1$ and no Sarkisov link starts from $X$,
then $X/B$ is a del Pezzo fibration of degree $1$ that is birationally rigid over $B$ (see \cite[Definition~1.3]{CortiLMS}).
Vice versa, if $X/B$ is a del Pezzo fibration of degree $1$ that is birationally rigid over $B$,
then the only Sarkisov links that can start from $X$ are described in Case~\ref{typeIIfibre} of Lemma~\ref{Lemm:TwotypesII}.
For some (birationally rigid over the base) del Pezzo fibrations such links do not exist (see \cite{Krylov}).
However, in general they may exist and are not well understood (see \cite{Park1,Park2}).
\end{remark}

\section{Bounding the gonality and genus of curves}
We first state a consequence of the boundedness of weak-Fano terminal
varieties.
 The next lemma applies to Sarkisov links involving a Fano threefold of rank~$1$ (when one of the $B_i$ has dimension $0$) and to Sarkisov links of type $\mathrm{IV}$ between del Pezzo fibrations (when $\dim(B_1)=\dim(B_2)=1$ and $\dim(Z)=0$).

\begin{lemma}\label{Lemm:BoundDim0} There are integers $g_0,c_0\ge 1$
such that, for each  Sarkisov link
 $\chi\colon X_1\rat X_2$  between two Mori fibre spaces $X_1/B_1$ and $X_2/B_2$  such that $\dim(X_1)=\dim(X_2)=3$ over a base $Z$ of dimension $0$,  the following hold:

\begin{enumerate}
\item\label{divisorial}
Each divisorial contraction involved in the Sarkisov diagram of $\chi$ contracts a divisor birational to $\p^1\times \Gamma$ where $\gon(\Gamma)\le c_0$ and $g(\Gamma)\le g_0$.
\item\label{Fibres}
For each $i\in\{1,2\}$ such that $\dim(B_i)=1$, each fibre of $X_i/B_i$ is birational to $\p^1\times \Gamma$ with $\gon(\Gamma)\le c_0$ and   $g(\Gamma)\le g_0$.
\end{enumerate}
\end{lemma}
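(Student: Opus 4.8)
The plan is to derive everything from the boundedness of terminal weak Fano threefolds. The crucial consequence of the hypothesis $\dim(Z)=0$ is that the relative notions over $Z$ become absolute: in the top row of the Sarkisov diagram of $\chi$, described in Remark~\ref{Rem:WeakFanoTop}, the varieties $Y_0,Y_0'$ are honest weak Fano threefolds with $\mathbb{Q}$-factorial terminal singularities, and the central model $\bar Y$ is a genuine Fano threefold with terminal singularities. By \cite{KMMT-2000} (see also \cite{BirkarS}) such Fano threefolds form a bounded family: there is a single morphism $\mathcal{Y}\to T$ over a scheme of finite type containing every such $\bar Y$ as a geometric fibre. Since by Remark~\ref{Rem:WeakFanoTop} the whole diagram is reconstructed from $\bar Y/Z=\bar Y$, it suffices to control the surfaces attached to a diagram built over a member of this one fixed bounded family.

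Next I would propagate boundedness through the diagram. For a fixed $\bar Y$ there are only finitely many small $\mathbb{Q}$-factorialisations $Y_0,Y_0'$, finitely many flips and flops in the chains $Y_i\dashrightarrow Y_{i+1}$ and $Y_i'\dashrightarrow Y_{i+1}'$, and finitely many extremal divisorial contractions $Y_i\to X_i$ and Mori fibrations $X_i\to B_i$; moreover all of these operations can be carried out in families after stratifying $T$. Consequently every threefold occurring in the diagram, every exceptional divisor $E_i$ of a divisorial contraction, every centre $e_i$, and every fibre of $X_i/B_i$ with $\dim(B_i)=1$, ranges over a bounded family. In particular the surfaces $E_i$ together with the fibres of the $X_i/B_i$ form one bounded family $\mathcal{S}\to T'$ of surfaces with $T'$ of finite type.

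Finally I would extract the numerical bounds. Each exceptional divisor $E_i$ of a divisorial contraction of threefolds is uniruled, being swept out by the (rational, extremal) curves it contracts, and likewise each fibre of a del Pezzo fibration is covered by rational curves and hence uniruled; thus every such surface $S$ is birational to $\mathbb{P}^1\times\Gamma$ for a smooth projective curve $\Gamma$, and $g(\Gamma)$ equals the irregularity of a smooth model of $S$, a birational invariant. Over the bounded family $\mathcal{S}\to T'$ this irregularity takes only finitely many values, hence is bounded by some $g_0$; and then $\gon(\Gamma)\le \lfloor (g(\Gamma)+3)/2\rfloor\le c_0$ by the classical bound on the gonality of a curve in terms of its genus. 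Because $\mathcal{S}\to T'$ is a single fixed family, the constants $g_0,c_0$ are absolute, which establishes both \ref{divisorial} and \ref{Fibres}.

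The main obstacle is the boundedness propagation of the second paragraph. A priori the centres $e_i$ are arbitrary curves and the special fibres arbitrary surfaces inside the threefolds of the family, and a fixed threefold contains subvarieties of unbounded genus, so boundedness of the ambient threefolds alone is not enough. The resolution is that these loci are not arbitrary: $e_i$ is the centre of an \emph{extremal} divisorial contraction and the fibres are fibres of an \emph{extremal} Mori fibration, so each member of the bounded family of threefolds contributes only finitely many of them. Making this precise is exactly where one needs MMP to be performed in families over a base of finite type, which is the real content of the boundedness input of \cite{KMMT-2000} and \cite{BirkarS}.
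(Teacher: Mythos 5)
Your starting point --- reduce everything to boundedness of terminal (weak) Fano threefolds via the top row of the Sarkisov diagram as in Remark~\ref{Rem:WeakFanoTop} --- is the same as the paper's, but your second paragraph contains a genuine gap, and you have put your finger on it yourself. The claim that the small $\QQ$-factorialisations, the chains of flips and flops, the divisorial contractions and the Mori fibrations ``can be carried out in families after stratifying $T$'' is precisely the assertion that the MMP can be run in families over a base of finite type, and this is \emph{not} what \cite{KMMT-2000} or \cite{BirkarS} prove: those references give boundedness of the varieties themselves, and say nothing about performing extremal contractions, flips, or Sarkisov diagrams uniformly in families. Deferring ``the real content'' to them is a misattribution, and without that step the constants are not absolute: knowing that each fixed central model $\bar Y$ contributes only finitely many diagrams, exceptional divisors $E_i$, centres $e_i$ and special fibres does not bound their degrees, genera or gonalities uniformly as $\bar Y$ ranges over an infinite (though bounded) family.

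The paper closes exactly this hole by a different mechanism, which avoids any MMP-in-families argument. First, it propagates a \emph{property}, not a family structure: every variety occurring in the diagram is \FT (weak Fano of rank $2$ at the top since $\dim Z=0$, then Corollary~\ref{cor:FT-flips} and Lemma~\ref{lemma:FT-bir-div} carry the property through flips and divisorial contractions), so each one individually lies in the single bounded family of Corollary~\ref{cor:bound}, obtained from \cite{BirkarS} applied to terminal pairs. Second, it bounds the relevant \emph{subvarieties} by direct degree estimates: Proposition~\ref{prop:bound-dcontr} bounds the exceptional divisor of any divisorial Mori contraction using Kawamata's bound $-K_X\cdot C\le 2n$ on lengths of extremal rays, Kodaira's lemma ($-mK_{\mathfrak X}+m\mathfrak h^*\mathfrak L\sim\mathfrak A+\mathfrak D$ with $\mathfrak D$ effective), and a relative Hilbert-scheme argument; Proposition~\ref{prop:bound-Fcontr} bounds the special fibres of the Mori fibrations by adjunction (the generic fibre is a terminal del Pezzo, so $(-K)^2$ is bounded by \cite{BirkarS}) together with nefness and proportionality of the restricted divisors, the bound being transferred from the generic to the special fibre by flatness over the smooth one-dimensional base. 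Your concluding step (genus as irregularity of a resolution, then the Brill--Noether bound $\gon(\Gamma)\le\lfloor(g(\Gamma)+3)/2\rfloor$) would work once boundedness of the surfaces is in hand, though the paper's route --- a general hyperplane section has bounded degree, hence bounded genus and gonality, and it dominates $\Gamma$ --- is more elementary. As written, however, your proposal is missing the actual mechanism that makes $g_0$ and $c_0$ absolute.
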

\begin{proof}
As in Remark~\ref{Rem:WeakFanoTop}, we consider the variety $Y_0$ which is pseudo-isomorphic
to the top varieties in the Sarkisov diagram and which is a weak-Fano variety of
rank 2, since the base $Z$ of the Sarkisov link is of dimension $0$. 
In particular, we see that $Y_0$ is $\mathrm{FT}^{\mathrm{t}}$ (see Definition~\ref{def:FT}),
so that all top varieties in the Sarkisov diagram are also $\mathrm{FT}^{\mathrm{t}}$ by Corollary~\ref{cor:FT-flips}.
Hence, the assertion~\eqref{divisorial} follows from Corollary~\ref{rcorollary:bound-genus},
and the assertion~\eqref{Fibres} follows from Corollary~\ref{corollary:FT-del-Pezzp-fibers}.
\end{proof}

The following result is a direct consequence of Lemma~\ref{Lemm:BoundDim0}.

\begin{corollary}
\label{corollary:solid}
There are integers $g_0,c_0\ge 1$ such that for each Mori fibre space $X/B$ where $\dim(X)=3$, $\dim(B)=0$,
not birational to a conic bundle or a del Pezzo fibration,
the genus and the gonality of elements of $\Bir(X)$ are bounded by $g_0$ and $c_0$ respectively.
\end{corollary}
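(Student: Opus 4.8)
The plan is to take $g_0,c_0$ to be exactly the constants produced by Lemma~\ref{Lemm:BoundDim0} and to show that they bound the genus and gonality of every $\varphi\in\Bir(X)$, uniformly over all $X$ in the stated class. Since $\dim(B)=0$, the variety $X$ is a Fano threefold of Picard rank $1$. By Theorem~\ref{TheoremSarkisov}, any $\varphi\in\Bir(X)$ factors as a composition $\varphi=\chi_n\comp\cdots\comp\chi_1$ of Sarkisov links $\chi_j\colon X_j\rat X_{j+1}$ between Mori fibre spaces $X_j/B_j$, together with isomorphisms of Mori fibre spaces, which contract no divisor. Each $X_j$ is birational to $X$, hence birational neither to a conic bundle nor to a del Pezzo fibration; by the trichotomy for three-dimensional Mori fibre spaces this forces $\dim(B_j)=0$, so every $X_j$ is again a Fano threefold of Picard rank $1$.

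With all endpoints $X_j$ of Picard rank $1$, the analysis recorded in Remark~\ref{Rem:Links3Dnoconicbundle} shows that each link $\chi_j$ must be of type $\mathrm{II}$ over a base $Z$ of dimension $0$: links of type $\mathrm{I}$ and $\mathrm{III}$ would raise the Picard rank and thereby exhibit one of the two Mori fibre spaces as a del Pezzo fibration over $\p^1$, while a link of type $\mathrm{IV}$ would require a second fibration structure on a rank\-$1$ Fano; both are impossible here. Consequently Lemma~\ref{Lemm:BoundDim0} applies to every $\chi_j$, and its part~\ref{divisorial} says that each divisorial contraction occurring in the Sarkisov diagram of $\chi_j$ contracts a divisor birational to $\p^1\times\Gamma$ with $\gon(\Gamma)\le c_0$ and $g(\Gamma)\le g_0$.

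It then remains to check that every prime divisor $S\subseteq X$ contracted by $\varphi$ is birational to such a $\p^1\times\Gamma$. Following the strict transform of $S$ through the factorisation, let $j$ be the first index at which it is contracted by $\chi_j$. Inside the type $\mathrm{II}$ diagram of $\chi_j$, the pseudo\-isomorphism is an isomorphism in codimension $1$ and so carries prime divisors to prime divisors; therefore the strict transform of $S$ can only be contracted by one of the two divisorial contractions of that diagram, and $S$ is birational to the corresponding exceptional divisor. By part~\ref{divisorial} of Lemma~\ref{Lemm:BoundDim0} this divisor, and hence $S$, is birational to $\p^1\times\Gamma$ with $\gon(\Gamma)\le c_0$ and $g(\Gamma)\le g_0$. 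Taking the maximum over all contracted $S$ yields $\gon(\varphi)\le c_0$ and $g(\varphi)\le g_0$, and since $g_0,c_0$ depend only on the statement of Lemma~\ref{Lemm:BoundDim0} they are independent of $X$ and $\varphi$.

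The step I expect to require the most care is the last one: one must be certain that a divisor contracted by the whole composition is genuinely birational to a divisor contracted by a single divisorial contraction inside some link, that is, that neither the pseudo\-isomorphisms nor the fibration morphisms contribute contracted divisors of uncontrolled birational type. This is precisely what the codimension\-$1$ structure of a type $\mathrm{II}$ link guarantees, once we have reduced, via the exclusion of conic bundles and del Pezzo fibrations, to links of type $\mathrm{II}$ over a zero\-dimensional base, where Lemma~\ref{Lemm:BoundDim0} is available.
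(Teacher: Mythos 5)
Your proof is correct and follows essentially the same route as the paper: decompose $\varphi$ via Theorem~\ref{TheoremSarkisov}, observe that the exclusion of conic bundles and del Pezzo fibrations forces every link in the decomposition to lie over a base of dimension $0$, and apply Lemma~\ref{Lemm:BoundDim0}\ref{divisorial}. The paper's proof is terser, leaving implicit both the classification of the links as type $\mathrm{II}$ over a point and the tracking of contracted divisors through the composition, which are precisely the details you spell out.
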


\begin{proof}
Every element of $\Bir(X)$ decomposes into a product of Sarkisov links and isomorphisms of Mori fibre spaces (Theorem~\ref{TheoremSarkisov}). The base of each of these Sarkisov links has dimension $0$, as $X$ is not birational to a conic bundle or a del Pezzo fibration. Hence the genus and the gonality of each Sarkisov link are bounded by $g_0$ and $c_0$ respectively by Lemma~\ref{Lemm:BoundDim0}\ref{divisorial}. This provides the result.
\end{proof}

In fact, Corollary~\ref{corollary:solid} can be easily extended to higher dimension, and concerns then the \emph{solid Fano varieties}, defined as below (see  \cite[Definition~1.4]{AhmadinezhadOkada}):
\begin{definition}
A Fano variety $X$ being a Mori fibre space over a point is said to be  $($birationally$)$ \emph{solid} if it is not birational to any Mori fibre space over a positive dimensional base.
\end{definition}

\begin{corollary}
\label{corollary:solid-any-dimensions}
For every integer $n\ge 3$, there is an integer $c_n\ge 1$ $($that only depends on $n)$
such that for each solid Fano variety $X$ of dimension $n$,
the covering gonality of elements of $\Bir(X)$ are bounded by $c_n$.
\end{corollary}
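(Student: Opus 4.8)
The plan is to run the argument of Corollary~\ref{corollary:solid} essentially verbatim, replacing its only dimension-three input, namely Lemma~\ref{Lemm:BoundDim0}, by the higher-dimensional boundedness of Fano varieties with terminal singularities due to Birkar \cite{BirkarS} (which in dimension three was \cite{KMMT-2000}). First I would fix a solid Fano $n$-fold $X$ and an element $\varphi\in\Bir(X)$, and decompose $\varphi$ into Sarkisov links and isomorphisms of Mori fibre spaces using Theorem~\ref{TheoremSarkisov}. Every Mori fibre space occurring in this chain is birational to $X$; since $X$ is solid it is not birational to any Mori fibre space over a positive-dimensional base, so each of these Mori fibre spaces is a Fano variety of Picard rank $1$ over a point. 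Consequently the base $Z$ of every link, being dominated by the two bases, has $\dim Z=0$, and every link is of type $\mathrm{II}$. As recalled in the introduction, each irreducible hypersurface contracted by $\varphi$ is birational to $\p^1\times S$ for some $(n-2)$-dimensional $S$, and up to birational equivalence these are exactly the exceptional divisors of the divisorial contractions appearing in the links. Thus it suffices to produce a constant $c_n$, depending only on $n$ and not on $X$, bounding $\covgon(S)$ for the base $S$ of any divisor contracted in a type-$\mathrm{II}$ link over a point between Picard-rank-one Fano $n$-folds.

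I would then analyse a single such link using Remark~\ref{Rem:WeakFanoTop}. Over the point $Z$, its top row is controlled by a central model $\bar Y$, a Fano variety of dimension $n$ with terminal singularities and $\rk\Cl(\bar Y)=2$, and the relevant top varieties $Y_i$ are small $\QQ$-factorialisations of $\bar Y$ (joined to it by a chain of flips). The decisive point is that Birkar's theorem on boundedness of Fano varieties with terminal singularities of fixed dimension (in particular $1$-log canonical) applies to $\bar Y$ uniformly: as $X$ ranges over all solid Fano $n$-folds and the link over all possibilities, the varieties $\bar Y$ form a single bounded family depending only on $n$. This is exactly the place where the dimension-three input of Lemma~\ref{Lemm:BoundDim0} is replaced; the $\mathrm{FT}^{\mathrm t}$-propagation through flips used there goes through in dimension $n$ once this boundedness is granted. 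Each exceptional divisor $E_i\subseteq Y_i$ is carried by the small map $Y_i\dashrightarrow\bar Y$ to its strict transform $\bar E_i$, a prime divisor on $\bar Y$ with $E_i$ and $\bar E_i$ birational (small maps being isomorphisms in codimension one). Hence the problem reduces to bounding $\covgon$ of prime divisors lying on members of the bounded family of central models.

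Finally I would invoke the principle that covering gonality is a bounded invariant across a bounded family. Embedding the bounded family of central models uniformly into a fixed $\p^N$ by a bounded multiple of the anticanonical class, the prime divisors arising as exceptional loci of divisorial contractions form themselves a bounded family of $(n-1)$-folds of bounded degree; cutting such a divisor by general hyperplanes through a general point yields irreducible curves of degree bounded in terms of $n$, and hence of gonality bounded by a constant $c_n$. Since the covering gonality of the $(n-2)$-dimensional base $S$ is computed on $\bar E_i$, this bounds $\covgon(S)$, and therefore $\covgon(\varphi)$, by $c_n$.

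The step I expect to be the main obstacle is this last reduction from boundedness to gonality, and in particular verifying that the contracted divisors themselves --- not merely the ambient $\bar Y$ --- range in a bounded family. This is delicate because the divisor naturally lives on $Y_i$, which is recovered from the bounded $\bar Y$ only after a sequence of flips; the resolution is to pass to strict transforms under the small (codimension-one-trivial) pseudo-isomorphism, after which one must still argue that the finitely many numerical types of extremal divisorial contraction on a rank-two Fano-type variety give exceptional divisors of bounded degree. Granting that, the passage ``bounded degree $\Rightarrow$ bounded covering gonality'' is the uniform-in-$n$ analogue of the explicit genus and gonality estimates for contracted surfaces used in the threefold case.
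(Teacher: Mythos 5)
Your outline coincides with the paper's own strategy: decompose $\varphi$ via Theorem~\ref{TheoremSarkisov}, use solidity to force every link to be of type $\mathrm{II}$ over a point between Picard-rank-one Fano $n$-folds, apply Birkar's boundedness \cite{BirkarS} to the (weak) Fano varieties of rank two sitting in the top row of each link (Remark~\ref{Rem:WeakFanoTop}), and finish by cutting divisors of bounded degree with general hyperplanes to bound the covering gonality. Nevertheless there is a genuine gap, and it is exactly at the step you yourself flag: the assertion that the exceptional divisors (equivalently, their strict transforms on the central model $\bar Y$) ``form a bounded family of bounded degree'' is never proved, and the justification you sketch would not prove it. Knowing that each individual rank-two Fano-type variety carries only finitely many extremal divisorial contractions gives, for that \emph{fixed} variety, finitely many exceptional divisors; it gives no uniform bound on their degrees as the variety ranges over the bounded family. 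Boundedness of the ambient varieties $\{\bar Y\}$ does not bound arbitrary prime divisors on them, and nothing in the ``finitely many numerical types'' remark controls how the classes of these exceptional divisors vary from fibre to fibre of the bounding family.

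This missing step is precisely the content of the paper's Proposition~\ref{prop:bound-dcontr} (applied, via Corollary~\ref{cor:FT-flips}, to the $\QQ$-factorial \FT top varieties $Y_i$ rather than to $\bar Y$ --- an inessential difference, since flips and the small contraction $Y_0\to\bar Y$ are isomorphisms in codimension one). Its proof is not a formal consequence of boundedness: one uses Kawamata's bound on the length of extremal rays, by which the exceptional divisor of a divisorial Mori contraction is covered by contracted curves $C$ with $-K\cdot C\le 2n$, together with an effective decomposition $-mK_{\mathfrak{X}}+m\mathfrak{h}^*\mathfrak{L}\sim \mathfrak{A}+\mathfrak{D}$ on the total space of the bounded family; then either some covering curve meets $\mathfrak{D}$ negatively, which forces the exceptional divisor to lie inside $\mathfrak{D}$ and hence have bounded degree, or all covering curves have bounded $\mathfrak{A}$-degree, and a relative Hilbert-scheme argument bounds the divisors they sweep out. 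Once this proposition is granted, your final passage from bounded degree to bounded covering gonality is exactly the paper's Corollary~\ref{rcorollary:bound-genus}, and indeed the paper's proof of the present corollary consists of nothing more than the Sarkisov reduction you describe followed by a citation of that corollary. So your proposal is the right skeleton, but the analytic heart of the argument --- Proposition~\ref{prop:bound-dcontr} --- is left unsupplied, and the shortcut you suggest in its place does not close the gap.
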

\begin{proof}
The proof is similar as the one of Lemma~\ref{Lemm:BoundDim0} and  Corollary~\ref{corollary:solid}:
As $X$ is a solid Fano, every element of $\Bir(X)$ decomposes into a finite number of Sarkisov links between Mori fibre spaces over a base of dimension $0$.  As in Remark~\ref{Rem:WeakFanoTop}, we consider the variety $Y_0$ which is pseudo-isomorphic
to the top varieties in the Sarkisov diagram and which is a weak-Fano variety of
rank 2, since the base $Z$ of the Sarkisov link is of dimension $0$. The result then follows from Corollary~\ref{rcorollary:bound-genus}. 
\end{proof}

We can now extend Corollary~\ref{corollary:solid} to a more general situation:

\begin{lemma}
\label{Lemm:g0c0moregeneral}
There are integers $g_0,c_0\ge 1$ such that the following holds:

Suppose $X/B$ is a Mori fibre space with $\dim(X)=3$ and there exists a Sarkisov link over a base of dimension $0$ involving $X/B$.
Then, for every element $\varphi \in Bir(X)$ having a decomposition into Sarkisov links that involves no conic bundles, the gonality of $\varphi$ is bounded by $c_0$. Moreover, if we can choose a decomposition of $\varphi$ such that no del Pezzo fibration of degree $\ge 3$ arises, the genus of $\varphi$ is bounded by $g_0$.
\end{lemma}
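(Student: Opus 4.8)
The plan is to set $g_0,c_0$ equal to the constants of Lemma~\ref{Lemm:BoundDim0} (enlarged as indicated below), and to bound the invariants of the surfaces contracted by $\varphi$ by reducing everything to the individual links of a Sarkisov decomposition. By Theorem~\ref{TheoremSarkisov} write $\varphi=\chi_N\comp\cdots\comp\chi_1$ as a composition of Sarkisov links and isomorphisms through Mori fibre spaces $X=X_0,X_1,\dots,X_N=X$, choosing a decomposition that involves no conic bundles (and, for the genus assertion, no del Pezzo fibration of degree $\ge 3$). The first point is that $X$ is rationally connected: by hypothesis $X/B$ occurs in a Sarkisov link over a base of dimension $0$, so by Remark~\ref{Rem:Links3Dnoconicbundle} either $X$ is a Fano threefold of Picard rank $1$, or $X$ is a del Pezzo fibration over $\p^1$; both are rationally connected (for the fibration, apply Graber--Harris--Starr, the base $\p^1$ and the general fibre being rationally connected). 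As rational connectedness is a birational invariant and every $X_i$ is birational to $X$, each $X_i$ is rationally connected; in particular every del Pezzo fibration occurring among the $X_i$ has as base a rationally connected curve, that is $\p^1$.

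Next I would reduce to a single link. If $D\subseteq X$ is a prime divisor contracted by $\varphi$, track its strict transforms $D=D_0,D_1,\dots$; since $\varphi(D)$ has codimension $\ge 2$, the divisor $D$ cannot survive through all the links, so there is a first index $j$ with $D_{j-1}$ contracted by $\chi_j$. As the partial compositions $\chi_{j-1}\comp\cdots\comp\chi_1$ are birational maps not contracting $D$, the divisor $D$ is birational to $D_{j-1}$, which is a divisor contracted by the single link $\chi_j$. It therefore suffices to bound $\gon(\Gamma)$, respectively $g(\Gamma)$, for every curve $\Gamma$ with $\p^1\times\Gamma$ birational to a divisor contracted by one of the $\chi_j$.

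Now I would run through the possible links, using Remark~\ref{Rem:Links3Dnoconicbundle}. For a link whose base $Z$ has dimension $0$ (types $\mathrm{I}$ and $\mathrm{III}$, and type $\mathrm{II}$ over a point), every contracted divisor is the exceptional divisor of a divisorial contraction in the Sarkisov diagram, and Lemma~\ref{Lemm:BoundDim0}\ref{divisorial} bounds it by $\p^1\times\Gamma$ with $\gon(\Gamma)\le c_0$ and $g(\Gamma)\le g_0$. A link of type $\mathrm{IV}$ over a point is a pseudo-isomorphism and contracts no divisor. The only remaining case is a link of type $\mathrm{II}$ over a curve which, by the rational connectedness established above, is a link between two del Pezzo fibrations over $\p^1$; here the contracted divisor is birational to $\p^1\times e_i$ for the curve $e_i$ of Lemma~\ref{Lemm:TwotypesII}. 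If $e_i$ surjects onto $\p^1$ (case \ref{typeIIcurvesurj}), then $e_i\to\p^1$ is finite of degree $r_i\le 8$, whence $\gon(e_i)\le r_i\le 8$, which we absorb into $c_0$. If $e_i$ is contained in a fibre (case \ref{typeIIfibre}), then $e_i$ lies on a del Pezzo surface of degree at most $9$; since such fibres range in a bounded family and $e_i$ arises as the centre of a divisorial contraction occurring in a Sarkisov link, the anticanonical degree of $e_i$ is bounded, and so are $\gon(e_i)$ and $g(e_i)$, which we again absorb into $c_0$ and $g_0$. This proves the gonality bound.

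For the genus bound I would use the additional hypothesis: no del Pezzo fibration of degree $\ge 3$ occurs, so every del Pezzo fibration above has degree $d_i\le 2$. Then in case \ref{typeIIcurvesurj} the classification recorded in Lemma~\ref{Lemm:TwotypesII} forces $(d_i,r_i)=(2,1)$, so $e_i$ is birational to $\p^1$ and $g(e_i)=0$, while in case \ref{typeIIfibre} the curve $e_i$ lies on a del Pezzo surface of degree $\le 2$ and is again bounded. Combining with the links over a base of dimension $0$, every curve $\Gamma$ that arises has $g(\Gamma)\le g_0$. The main obstacle is exactly this gonality/genus dichotomy for type $\mathrm{II}$ links over a curve: the gonality of a multisection of a del Pezzo fibration over $\p^1$ is automatically controlled by its degree $r_i\le 8$ over the base, regardless of its genus, whereas its genus is only controlled after degree-$3$ del Pezzo fibrations, which produce the high-genus $1$- and $2$-sections responsible for Theorem~\ref{MainTheorem}\ref{DelPezzo3GenusUnbounded}, are excluded; the subsidiary point needing care is the boundedness of the centres $e_i$ occurring in the fibrewise case \ref{typeIIfibre}.
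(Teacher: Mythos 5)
Your overall strategy (reduce to single links, then run through the link types using Remark~\ref{Rem:Links3Dnoconicbundle}) starts out parallel to the paper, and your treatment of rational connectedness, of links over a point via Lemma~\ref{Lemm:BoundDim0}, and of type~II links over a curve in case~\ref{typeIIcurvesurj} of Lemma~\ref{Lemm:TwotypesII} is correct. The genuine gap is in case~\ref{typeIIfibre}, the fibrewise type~II links. There you assert that the contracted divisor is birational to $\p^1\times e_i$ and then try to bound $e_i$. Both steps fail. First, in case~\ref{typeIIfibre} the divisor contracted by the link is not $\p^1\times e_i$: it is an entire (special) fibre of the del Pezzo fibration $X_{i-1}'/B$, which the link contracts onto the curve or point $e_i$ lying in a fibre of $X_i/B$; the curve $e_i$ is the centre of the divisorial contraction, not the ruling base of the contracted surface. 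Second, the birational type of such a fibre --- the curve $C$ with the fibre birational to $\p^1\times C$ --- is exactly what nobody knows how to bound for an arbitrary del Pezzo fibration: a special fibre could a priori be, say, a cone over a curve of large genus, and whether this actually happens for Mori fibre spaces is precisely the open Question~\ref{Ques:Unboundedfibres} of the paper. Your appeal to ``such fibres range in a bounded family'' is not available: the boundedness statements (Proposition~\ref{prop:bound-Fcontr}, Corollary~\ref{corollary:FT-del-Pezzp-fibers}) apply only to \FT varieties, hence to Mori fibre spaces occurring in links over a base of dimension $0$, and the intermediate del Pezzo fibrations $X_{i-1}'/B$ appearing in the middle of a decomposition need not be \FT (the \FT property is not a birational invariant).

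The paper closes this case not by bounding fibrewise links individually, but by grouping each maximal run $\chi_j,\dots,\chi_k$ of type~II links into a single composition $\nu$, which is a square birational map between del Pezzo fibrations over $\p^1$. Every surface contracted by $\nu$ is then either horizontal over the base --- in which case it is contracted at a link of your case~\ref{typeIIcurvesurj}, hence has gonality $\le 8$ and is rational when the degrees are $\le 2$ --- or it is vertical, and since the entire run is over the base, its strict transform in the first model is a full fibre of $X_{j-1}/B_{j-1}$. The fibres of this \emph{first} fibration are bounded: either $j=1$ and $X_{j-1}/B_{j-1}=X/B$ is involved in a link over a point by hypothesis, or $\chi_{j-1}$ is itself a link over a base of dimension $0$ involving $X_{j-1}/B_{j-1}$; in both cases Lemma~\ref{Lemm:BoundDim0}\ref{Fibres} applies. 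This back-tracking through the run of square links, to reach a fibration whose fibres are controlled by a dimension-zero link, is the missing idea in your proposal; without it (or an affirmative answer to Question~\ref{Ques:Unboundedfibres}) the fibrewise case cannot be closed by looking at one link at a time.
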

\begin{proof}
We choose integers $g_0$ and $c_0$ from Lemma~\ref{Lemm:BoundDim0}, and assume that $c_0\ge 8$.

If $\dim(B)=0$, then $X$ is a Fano variety, and thus rationally connected \cite{Zhang}. If $\dim(B)\ge 1$, then $\dim(B)=1$ as $X$ is not birational to a conic bundle. By assumption, $X/B$ is involved in a Sarkisov link over a base of dimension $0$. The variety $X$ is then birational to a Fano variety (the variety $\bar Y$ of Remark~\ref{Rem:WeakFanoTop}), so is again rationally connected.  Moreover, Lemma~\ref{Lemm:BoundDim0}\ref{Fibres} implies that each fibre of $X/B$ is birational to $C\times \p^1$ for some curve of genus and gonality bounded by $g_0$ and $c_0$.

We take an element $\varphi\in \Bir(X)$  that we decompose, using Theorem~\ref{TheoremSarkisov}), as \[\varphi=\theta_r\circ \chi_r\circ \cdots \circ \theta_1\circ \chi_1\circ \theta_0,\] where each $\chi_i$ is a Sarkisov link between $X_{i-1}'/B_{i-1}'$ and $X_{i}/B_{i}$, each $\theta_i$ is an isomorphism of Mori fibre spaces
\[\begin{tikzcd}[ampersand replacement=\&,column sep=.8cm,row sep=0.16cm]
X_i
\ar[dd]  \ar[rr,"\theta_i"] \&\& X_i'\ar[dd] \\ \\
B_i \ar[rr,"\simeq",swap] \&  \& B_i'
\end{tikzcd}\]
 where $X_0/B_0=X_r'/B_r'=X/B$. By assumption, we may choose one such decomposition such that none of the $X_i/B_i$ (or $X_i'/B_i'$) is a conic bundle, which means that $\dim(B_i)\in \{0,1\}$ for each $i\in \{0,\ldots,r\}$.

By Remark~\ref{Rem:Links3Dnoconicbundle} and Lemma~\ref{Lemm:TwotypesII}, we obtain three different types of Sarkisov links $\chi_i$:

$a)$ \emph{Sarkisov links $\chi_i$ with a base of dimension $0$.}

$b)$ \emph{Sarkisov links $\chi_i$ of type $\mathrm{II}$ over a curve $B_{i-1}=B_i$ inducing no isomorphism between the generic fibres of $X_{i-1}'/B_{i-1}$ and $X_i/B_i$.}

$c)$ \emph{Sarkisov links $\chi_i$ of type $\mathrm{II}$ over a curve $B_{i-1}=B_i$ inducing an isomorphism between the generic fibres of $X_{i-1}'/B_{i-1}$ and $X_i/B_i$.}

The genus and the gonality of the Sarkisov link in case $a)$ are bounded by $g_0$ and $c_0$ respectively (Lemma~\ref{Lemm:BoundDim0}\ref{divisorial}).

In Case $b)-c)$, the Sarkisov link is between two del Pezzo fibrations $X_{i-1}'/B_{i-1}'$ and  $X_i/B_i$ over a curve $B_{i-1}'=B_i$. As $X_{i-1}$ and $X_i$ are rationally connected (because they are birational to $X$), we obtain $B_{i-1}'=B_i \simeq \p^1$.

Case $b)$ corresponds to Case~\ref{typeIIcurvesurj} of Lemma~\ref{Lemm:TwotypesII}). The two divisorial contractions contract divisors onto  curves of $X_{i-1}'/B_{i-1}'$ and $X_{i}/B_{i}$ of gonality at most $8$, which are moreover rational (and thus of genus bounded by $g_0$) if the degree of the del Pezzo fibrations is $\le 2$ (Lemma~\ref{Lemm:TwotypesII}).

The remaining case is Case $c)$ (Case~\ref{typeIIfibre} of Lemma~\ref{Lemm:TwotypesII}). In this case, the surfaces $F_{i-1}\subseteq X_{i-1}'$ and $F_{i}\subseteq X_{i}$ contracted by $\chi_i$ and $\chi_i^{-1}$ correspond to fibres of the del Pezzo fibrations $X_{i-1}'/B_{i-1}'$ and $X_{i}/B_{i}$. If the fibre is general, it is rational and we are done, but it could be that it is birational to $\p^1\times C$ for some non-rational curve $C$, a priori of large genus / gonality even if we do not know if such a situation is really possible (see Question~\ref{Ques:Unboundedfibres}). To overcome this issue, we denote by $1\le j\le i\le k\le r$ the smallest integer $j\in \{1,\dots,i\}$ and the biggest integer $k\in \{i,\dots,r\}$  such that $\chi_j,\ldots,\chi_k$ are links of type $\mathrm{II}$, and obtain that \[\nu=\theta_k\circ \chi_k\circ \theta_{k-1}\circ\chi_{k-1}\circ \cdots \circ \chi_{j+1}\circ \theta_j\circ \chi_j\circ \theta_j\] is a birational map between del Pezzo fibrations $X_{j-1}/B_i$ and $X_{k}'/B_{k}'$ which fits in a commutative diagram
\[\begin{tikzcd}[ampersand replacement=\&,column sep=.8cm,row sep=0.16cm]
X_{j-1}
\ar[dd]  \ar[rr,"\nu",dashed] \&\& X_k'\ar[dd] \\ \\
B_i\ar[rr,"\simeq",swap] \&  \& B_k'\simeq \p^1.
\end{tikzcd}\]
Every surface contracted by $\nu$ is either a fibre, or birational to a surface contracted by a Sarkisov link in $\chi_j,\ldots,\chi_k$ of type $b)$.
We now prove that the fibres contracted by $\nu$ are birational to $C\times \p^1$ for some curve $C$ of genus and  gonality bounded by $g_0$ and $c_0$ respectively. If $j=1$, this is because $X_0/B_0=X/B$ and we already observed at the beginning that each fibre of $X/B$ had this property. If~$j>1$, then the Sarkisov link $\chi_{j-1}$ is not of type $\mathrm{II}$, so is over a base of dimension~$0$, i.e. is in Case a). Hence, each fibre of $X_{j-1}/B_{j-1}$ is birational to $C\times \p^1$ for some curve $C$ of genus $\le g_0$ and gonality $\le c_0$ (Lemma~\ref{Lemm:BoundDim0}\ref{Fibres}).

Hence, even if $\chi_i$ can a priori be of arbitrary large genus or gonality, the gonality of $\nu$ is bounded by $c_0$, and the genus is bounded by $g_0$ if no del Pezzo fibration of degree $3$ appears in the decomposition.

As $\varphi$ decomposes into links of type $a)$ and maps having the same form as $\nu$ (compositions of Sarkisov links of type $\mathrm{II}$),
the gonality of $\nu$ is bounded by $c_0$, and the genus is bounded by $g_0$ if no del Pezzo fibration of degree $3$ appears in the decomposition.\end{proof}

The following question is naturally raised by the proof of Lemma~\ref{Lemm:g0c0moregeneral}.
\begin{question}\label{Ques:Unboundedfibres}
Is there an integer $g\ge 1$ such that for each Mori fibre space $X\to B$ which is a del Pezzo fibration, each fibre is birational to $C\times \p^1$ for some curve $C$ of genus (respectively gonality) $\le g$?
\end{question}

In the case of birational maps between del Pezzo fibrations, we do not have an absolute bound (which would follow from a positive answer to Question~\ref{Ques:Unboundedfibres}), but we can easily obtain the following result on birational maps involving links over a base of dimension $1$. This is for instance the case for all elements of $\Bir(X)$ if $X/B$ is  a Mori fibre space not birational to a conic bundle with $X$ not rationally connected.

\begin{lemma}\label{Lemm:OnlyDelPezzo}
Let $X/B$ be a Mori fibre space such that $\dim(X)=3$ and \mbox{$\dim(B)=1$} (a del Pezzo fibration over a curve).
There are integers $c,g\ge 0$ (depending on $X/B$) such that the following holds:

For each birational map $\varphi\in\Bir(X)$ that decomposes into Sarkisov links of type $\mathrm{II}$,
each over a base of dimension $1$, the gonality of $\varphi$ is bounded by $c$.
Moreover, the genus of $\varphi$ is bounded by $g$ if no del Pezzo fibration of degree $\ge 3$ occurs in the decomposition.
\end{lemma}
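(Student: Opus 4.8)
The plan is to track the two kinds of surfaces that a composition of type~$\mathrm{II}$ links over the curve $B$ can contract, using the dichotomy of Lemma~\ref{Lemm:TwotypesII}, and to exploit that such a composition never destroys the fibration $X\to B$. First I would fix the constants. Among the finitely many irreducible components of the fibres of $X/B$, those that are birational to $\p^1\times C$ for a smooth curve $C$ contribute such a $C$; let $g$ and $c$ be upper bounds for the genera, respectively for the maximum of $8\gon(B)$ and the gonalities, of these finitely many curves. All of this depends only on $X/B$. Now write $\varphi=\theta_r\circ\chi_r\circ\cdots\circ\chi_1\circ\theta_0$ as a composition of isomorphisms of Mori fibre spaces $\theta_i$ and Sarkisov links $\chi_i$ of type~$\mathrm{II}$, each over a base of dimension~$1$. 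Since every $\chi_i$ is over $B_{i-1}=Z=B_i$ and every $\theta_i$ lies over an isomorphism of bases, the induced self-map of the curve $B$ is an automorphism; in particular $\varphi$ sends fibres of $X/B$ to fibres, so every irreducible surface contracted by $\varphi$ is either \emph{horizontal} (dominating $B$) or \emph{vertical} (contained in a fibre of $X/B$).

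Next I would localise where a contracted surface is born. An isomorphism contracts nothing, so any surface $H\subseteq X$ contracted by $\varphi$ becomes contracted for the first time at some link $\chi_i$; as its strict transforms up to that stage are not contracted, $H$ is birational to the divisor contracted by $\chi_i$, which in turn is birational to the exceptional divisor $E_i$ of the divisorial contraction $Y_i\to X_i$ over its centre $e_i\subseteq X_i$. By Lemma~\ref{Lemm:TwotypesII} there are two possibilities. In case~\ref{typeIIfibre} the link is fibrewise and $e_i$ lies in a fibre, so the divisor contracted by $\chi_i$ is vertical; being transported back to $X$ by a birational map over $B$, its representative $H$ is vertical on $X$ as well. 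In case~\ref{typeIIcurvesurj} the centre $e_i$ is a curve finite of degree $r_i\le 8$ over $B$, and $E_i$ is birational to $\p^1\times\widetilde{e_i}$ with $\widetilde{e_i}$ the normalisation of $e_i$; here $H$ is horizontal.

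I would then bound the two types separately. A vertical $H$, viewed on $X$, is an irreducible component of a fibre of the \emph{fixed} fibration $X/B$, hence one of finitely many surfaces depending only on $X/B$, and the associated curve therefore has genus $\le g$ and gonality $\le c$. A horizontal $H$ is birational to $\p^1\times\widetilde{e_i}$ with $\widetilde{e_i}\to B$ finite of degree $r_i\le 8$, so $\gon(\widetilde{e_i})\le 8\gon(B)\le c$. Taking the maximum over the contracted surfaces already gives $\gon(\varphi)\le c$. For the genus, if no del Pezzo fibration of degree $\ge 3$ occurs then every $X_i/B$ has degree $\le 2$, and the last assertion of Lemma~\ref{Lemm:TwotypesII} forces $(d_i,r_i)=(2,1)$ in case~\ref{typeIIcurvesurj} (degree $1$ admitting no such link at all); thus $r_i=1$, the finite map $\widetilde{e_i}\to B$ is an isomorphism, and $g(\widetilde{e_i})=g(B)\le g$. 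Combined with the vertical bound this yields $g(\varphi)\le g$.

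The main obstacle is precisely the vertical case: a priori a link $\chi_i$ may contract a component of a \emph{special} fibre of an intermediate model $X_i/B$, and such fibres are not governed by $X/B$ and could carry curves of arbitrarily large genus or gonality (this is exactly the phenomenon behind Question~\ref{Ques:Unboundedfibres}). What makes the argument go through---and what replaces the rational-connectedness input used in Lemma~\ref{Lemm:g0c0moregeneral}---is that the surfaces counted for $g(\varphi)$ and $\gon(\varphi)$ live on the source $X=X_0$, and since the whole composition is fibred over the single fixed curve $B$, a vertical contracted surface is necessarily a component of a fibre of $X/B$ itself, of which there are only finitely many. The one routine verification to carry out is that the birational map $X\dashrightarrow X_i$ assembled from the $\chi_j$ and $\theta_j$ is indeed over $B$, so that vertical divisors correspond to vertical divisors and the invariants measured on $X$ agree with those of the divisor contracted at the intermediate stage.
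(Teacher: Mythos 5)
Your proof is correct and takes essentially the same route as the paper's: both use that $\varphi$ is square over $B$ to split the contracted surfaces into vertical ones (fibres of $X/B$, bounded because only finitely many fibres are non-rational) and horizontal ones, which are born at a link of case~\ref{typeIIcurvesurj} of Lemma~\ref{Lemm:TwotypesII} and hence are birational to $\p^1\times\widetilde{e_i}$ with $\widetilde{e_i}\to B$ of degree $\le 8$ (so gonality $\le 8\gon(B)$), and birational to $\p^1\times B$ when all del Pezzo degrees are $\le 2$. The only repair needed is in your choice of constants: you define $g$ only from the curves attached to fibre components, but in the horizontal case you conclude $g(\widetilde{e_i})=g(B)\le g$, so you must enlarge $g$ to also dominate $g(B)$ --- a harmless fix, and indeed on this point your argument is more precise than the paper's, which asserts these curves are rational when Lemma~\ref{Lemm:TwotypesII} only gives that they are birational to $B$.
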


\begin{proof}
As each Sarkisov link occuring in the decomposition of $\varphi$ is of type $\mathrm{II}$, the base of the Sarkisov link is isomorphic to $B$, and so are all bases of the Mori fibre spaces involved.

The map $\varphi$ is a square birational map, i.e.~sends a general fibre of $X/B$ onto a general fibre.
There are finitely many fibres of $X/B$ that are not rational, so we only need to bound the gonality and the genus of the curves $C$
such that $C\times \p^1$ is birational to a surface contracted by $\varphi$ that is not a fibre.
Such a surface is contracted by a Sarkisov link $\chi$ between del Pezzo fibrations $X_{i-1}/B_{i-1}$ and $X_{i}/B_{i}$,
which is not an isomorphism between the generic fibres.
We proceed as in the proof of Lemma~\ref{Lemm:g0c0moregeneral}: The two divisorial contractions contract divisors
onto  multisections of $X_{i-1}/B_{i-1}$ and $X_{i}/B_{i}$.
As the variety $Y_0$ in the middle (see Remark~\ref{Rem:WeakFanoTop}) is a weak del Pezzo fibration over the base,
we can only blow-up curves with gonality at most $8$.
Moreover, the curves are rational if $X_{i-1}/B_{i-1}$ and $X_{i}/B_{i}$ are del Pezzo fibrations of degree $\le 2$.
This achieves the result.
\end{proof}

We can now apply Lemma~\ref{Lemm:g0c0moregeneral} and obtain the following result, which gives the proof of parts~\ref{NotConicBundleGonalityBounded} and~\ref{NotConicBundleNotDp3GenusBounded} of Theorem~\ref{MainTheorem}. Note that here the bound depends on $X$ and is not an absolute bound as in Lemma~\ref{Lemm:g0c0moregeneral}. This is of course needed, as one can consider the blow-up of any threefold along a curve of arbitrary large genus.

\begin{proposition}
\label{Prop:BoundedThreefolds}
Let $X$ be a projective threefold not birational to a conic bundle. Then, then the following hold:
\begin{enumerate}
\item\label{Prop:BoundedThreefolds1}
The gonality of the elements of $\Bir(X)$ is bounded.
\item\label{Prop:BoundedThreefolds2}
If $X$ is not birational to a del Pezzo fibration of degree $3$, the genus of the elements of $\Bir(X)$ is also bounded.
\end{enumerate}
\end{proposition}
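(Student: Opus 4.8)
The plan is to reduce to a Mori fibre space and then dispatch the two boundedness lemmas \ref{Lemm:g0c0moregeneral} and \ref{Lemm:OnlyDelPezzo} according to whether a Sarkisov link over a zero\-/dimensional base can occur. Since the boundedness of the genus and of the gonality of $\Bir(X)$ is a birational invariant, I would first run the minimal model program to replace $X$ by a $\QQ$-factorial terminal model for which either $K_X$ is nef or there is a Mori fibre space structure $X/B$. If $K_X$ is nef, then by \cite[Lemma~3.4]{Hanamura1987} every element of $\Bir(X)$ is a pseudo-automorphism and contracts no divisor, so there is nothing to bound. I would then record once and for all that, because $X$ is not birational to a conic bundle, no Mori fibre space birational to $X$ is a conic bundle; hence in any Sarkisov decomposition of any element of $\Bir(X)$ we have $\dim(B_i)\in\{0,1\}$, every del Pezzo fibration that occurs has degree $\le 3$ (degree $\ge 4$ being birational to a conic bundle by Lemma~\ref{Lemm:Dp4ConicBundle}), and, under the extra hypothesis of part~\ref{Prop:BoundedThreefolds2}, no del Pezzo fibration of degree $3$ occurs either, so that in fact no del Pezzo fibration of degree $\ge 3$ appears at all.

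The case division I would use is whether \emph{some} Mori fibre space birational to $X$ is involved in a Sarkisov link over a base of dimension $0$. In the affirmative case I would replace $X$ by such a model, so that $X/B$ satisfies the hypothesis of Lemma~\ref{Lemm:g0c0moregeneral}; this replacement is harmless because boundedness (though not the numerical bound) is a birational invariant. Every $\varphi\in\Bir(X)$ then decomposes into Sarkisov links, none of which involves a conic bundle, so Lemma~\ref{Lemm:g0c0moregeneral} bounds $\gon(\varphi)$ by the absolute constant $c_0$, giving~\ref{Prop:BoundedThreefolds1}; and under the hypothesis of~\ref{Prop:BoundedThreefolds2} no del Pezzo fibration of degree $\ge 3$ arises in the decomposition, so the same lemma bounds $g(\varphi)$ by $g_0$.

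In the negative case I would first argue that $\dim(B)=1$: if $X$ were a Fano of rank $1$, any Sarkisov link starting from $X$ would be over a zero\-/dimensional base and would involve $X$, contradicting the case hypothesis, so no link starts from $X$ and $\Bir(X)=\Aut(X)$ consists of pseudo-automorphisms, leaving nothing to bound. Thus $X/B$ is a del Pezzo fibration over a curve. For any $\varphi\in\Bir(X)$ and any Sarkisov decomposition, no constituent link is over a zero\-/dimensional base (such a link would exhibit a model of $X$ involved in a zero\-/dimensional\-/base link, contrary to assumption) and none involves a conic bundle; since a link over a positive\-/dimensional base not between conic bundles must be over a curve, hence of type $\mathrm{II}$ (Remark~\ref{Rem:Links3Dnoconicbundle}), every $\varphi$ decomposes into type~$\mathrm{II}$ links over one\-/dimensional bases. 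Lemma~\ref{Lemm:OnlyDelPezzo} then bounds $\gon(\varphi)$ by a constant depending on $X/B$, proving~\ref{Prop:BoundedThreefolds1}, and bounds $g(\varphi)$ once no del Pezzo fibration of degree $\ge 3$ occurs, which is guaranteed under the hypothesis of~\ref{Prop:BoundedThreefolds2}.

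The main obstacle I anticipate is the bookkeeping that makes the dichotomy exhaustive and legitimate: verifying that in the negative case with $\dim(B)=1$ \emph{every} link in \emph{every} decomposition is forced to be of type $\mathrm{II}$ over a curve, so that Lemma~\ref{Lemm:OnlyDelPezzo} applies to all of $\Bir(X)$ and not merely to some elements. The other delicate point is the exclusion of del Pezzo fibrations of degree $\ge 3$ needed for part~\ref{Prop:BoundedThreefolds2}, which combines the conic\-/bundle hypothesis (killing degree $\ge 4$ via Lemma~\ref{Lemm:Dp4ConicBundle}) with the degree\-/$3$ hypothesis; both must be phrased as statements about the whole birational class of $X$ rather than about a single model.
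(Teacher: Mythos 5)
Your proof is correct and follows essentially the same route as the paper: MMP reduction to a model with $K_X$ nef or a Mori fibre space structure, the case $\Bir(X)=\Aut(X)$ when no link starts from $X$, and then a dichotomy resolved by Lemma~\ref{Lemm:g0c0moregeneral} and Lemma~\ref{Lemm:OnlyDelPezzo}. Your only deviation is phrasing the dichotomy as whether \emph{some} Mori fibre space birational to $X$ is involved in a link over a point (then replacing $X$ by that model, using birational invariance of boundedness), which makes the case split cleanly exhaustive, whereas the paper splits on whether $X/B$ itself is involved in such a link and passes over that intermediate case without comment.
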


\begin{proof}
Using the minimal model program (MMP), we see that $X$ is birational to a $\QQ$-factorial variety $X'$ with at worst terminal singularities, which is either a Mori fibre space $X'/B'$ or where $K_{X'}$ is nef. We may replace $X$ with $X'$, as this does not change the boundedness (but can a priori change the bound).

If $K_{X}$ is nef, then the genus and the gonality of elements of $\Bir(X)$ are bounded, as no element of $\Bir(X)$  contracts any hypersurface.

We can then assume that $X/B$ is a Mori fibre space. We have $\dim(B)\in \{0,1\}$ as $X$ is not birational to a conic bundle.
If no Sarkisov link starts from $X$, the result is trivially true, since in this case Theorem~\ref{TheoremSarkisov} gives
\[\Bir(X)=\Aut(X).\]
Note that such threefolds do exists (see Remark~\ref{remark:fiberwise}).
To complete the proof, we may assume that $\Bir(X)\ne\Aut(X)$,
so that, in particular, there is a Sarkisov link that starts from $X$.
If $X/B$ is involved in a Sarkisov link over a base of dimension~$0$, the result follows from Lemma~\ref{Lemm:g0c0moregeneral}.
So we may assume that the base of every Sarkisov link between Mori fibre spaces birational to $X$ has dimension $1$.
Then $\dim(B)=1$ and every Sarkisov link involved in any decomposition of any element $\varphi\in\Bir(X)$ is of type $\mathrm{II}$ between two del Pezzo fibrations.
In particular, the del Pezzo fibration $X/B$ is birationally rigid over $B$ (see \cite[Definition~1.3]{CortiLMS} and Remark~\ref{remark:fiberwise}).
Now the result follows from Lemma~\ref{Lemm:OnlyDelPezzo}.
\end{proof}

\section{Del Pezzo fibrations of degree \texorpdfstring{$3$}{3}}\label{Sec:DP3}
We recall the following result, proven in \cite{CortiAnnals} (see also \cite{KollarEle} for generalisations):
\begin{proposition}[{\cite[Theorem 1.10]{CortiAnnals}}]
\label{Prop:Corti}
Let $B$ be a smooth curve and let $X\to B$ be a del Pezzo fibration of degree $3$ \textup(cubic surface fibration\textup). Then, there exists a birational map
\[
 \xymatrix{
 X\ar[dr]\ar@{-->}[rr]^{\varphi}&&X'\ar[dl]
 \\
 &B&
 }
\]
such that  $X'/B$ is another del Pezzo fibration of degree $3$ having the following properties:
\begin{enumerate}
 \item\label{Cond2}
 $X'$ is a projective threefold with terminal singularities of index $1$;
 \item\label{Cond3}
 every fibre of $X'/B$ is reduced and irreducible, and is a Gorenstein del Pezzo surface;
 \item\label{Cond4}
 the anti-canonical system $-K_{X'}$ is relatively very ample and defines an embedding in a $\p^3$-bundle over $B$.
 \end{enumerate}
\end{proposition}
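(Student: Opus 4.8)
The plan is to realise the standard model as the relative anticanonical image of $X$, once the total space has been made Gorenstein and all the degenerate fibres have been made irreducible by a sequence of birational surgeries over $B$. Since the assertion is birational over $B$, we are free to replace $X$ by any birational model $X/B$ at each stage. First I would run the relative minimal model program for $X\to B$ and reduce to the case where $X/B$ is a Mori fibre space in the sense of Definition~\ref{def:MorFibreSpace}: $X$ is $\QQ$-factorial with terminal singularities, $\rk\Pic(X/B)=1$, and $-K_X$ is relatively ample. The generic fibre $X_\eta$ is then a smooth del Pezzo surface of degree $3$ over the field $K=\C(B)$, so $|-K_{X_\eta}|$ embeds it as a smooth cubic surface in $\p^3_K$.

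Second, I would spread this embedding out over $B$. The $4$-dimensional $K$-vector space $H^0(X_\eta,-K_{X_\eta})$ spreads out to a rank-$4$ bundle $\EEE$ on $B$ (a torsion-free sheaf on a smooth curve is locally free), giving a $\p^3$-bundle $P=\p(\EEE)\to B$ and a rational map $X\dashrightarrow P$ over $B$ defined fibrewise by $|-K|$. Taking the closure of the image produces a cubic surface fibration $X'\subseteq P$ birational to $X$ over $B$, on which $-K$ is relatively very ample at least over the generic point.

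Third --- and this is the crux --- I must arrange that $X'$ has only Gorenstein (index $1$) terminal singularities and that every fibre of $X'\to B$ is reduced and irreducible. The naive closure can fail on both counts: over finitely many points of $B$ the fibre can be reducible, non-reduced, or carry worse-than-Du-Val singularities, and correspondingly $X'$ can acquire non-terminal or higher-index points. The idea is to remove these defects one point of $B$ at a time by a finite sequence of Sarkisov links over $B$ (a relative $2$-ray game: flips, flops, divisorial extractions and contractions all supported over a single point of $B$, as in Remark~\ref{Rem:WeakFanoTop}), each of which simplifies the central fibre. One attaches to $X'/B$ a complexity invariant --- built from the discrepancies of the non-Gorenstein points together with the number of components of the reducible fibres --- and shows that each link strictly decreases it, so the process terminates. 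At termination $-K_{X'}$ is relatively very ample and Cartier, whence $X'$ is Gorenstein; its terminal singularities are then compound Du Val by Reid's classification, giving property~\ref{Cond2}; the embedding $X'\hookrightarrow P=\p(\EEE)$ realises~\ref{Cond4}; and every fibre, being a Cartier divisor in the Gorenstein threefold $X'$, is Gorenstein with $-K_{\text{fibre}}=\mathcal{O}(1)|_{\text{fibre}}$ ample of degree $3$, hence an irreducible reduced Gorenstein del Pezzo surface of degree $3$, giving~\ref{Cond3}.

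The main obstacle is exactly this third step. Producing the correct elementary links and, above all, proving that the simplification terminates requires a detailed classification of the terminal (compound Du Val) singularities that can occur on a cubic surface fibration and of the birational modifications over a point of $B$ that strictly decrease the complexity; this is the technical heart of the argument and is where essentially all of the difficulty lies, the first two steps being routine by comparison.
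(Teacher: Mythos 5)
First, a point of orientation: the paper does not prove this proposition at all --- it is recalled verbatim from Corti's work (\cite[Theorem 1.10]{CortiAnnals}, with \cite{KollarEle} cited for generalisations) and used as a black box in Section~\ref{Sec:DP3}. So your attempt has to be measured against Corti's theorem itself, and against that standard it has a genuine gap, which you yourself locate correctly: everything of substance is in your third step, and there you assert rather than prove. The claim that there exists a complexity invariant --- ``discrepancies of the non-Gorenstein points together with the number of components of the reducible fibres'' --- which strictly decreases under a suitable link over each point of $B$, and that the process terminates in a model satisfying \ref{Cond2}--\ref{Cond4}, \emph{is} the theorem; it occupies essentially all of Corti's Annals paper. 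There is no a priori reason the particular invariant you name is monotone under any available two-ray game, and Corti's actual argument requires a delicate local analysis over each closed point of $B$ (classification of the cDV degenerations that can occur, explicit weighted blow-ups, and separate treatment of the Gorenstein and non-Gorenstein cases) precisely because naive invariants do not obviously decrease. Deferring this to ``one shows that each link strictly decreases it'' leaves the proof empty where it matters.

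There is also a logical flaw in your concluding deduction of \ref{Cond3}. Once $X'$ is Gorenstein and embedded in a $\p^3$-bundle, each fibre is indeed a Cartier divisor, hence a Gorenstein cubic surface; but this does \emph{not} make it reduced or irreducible --- a plane plus a quadric, or a triple plane, are perfectly good Cartier cubics in a fibre of the $\p^3$-bundle. Reducedness and irreducibility of every fibre must be an output of the surgery process (it is one of Corti's defining conditions for a standard model), not something recoverable at termination from Gorenstein-ness and degree $3$; as written, your final ``hence'' is circular. If you wanted a route you could actually complete, the generalisation cited in the paper is the better template: Kollár's approach replaces the link-by-link termination argument by choosing, over each local ring of $B$, a cubic equation for the generic fibre that is minimal (semistable) under the action of $\GL_4$ of that local ring, and takes the closure of that cubic in the $\p^3$-bundle as the standard model. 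That argument is elementary and effective, and avoids precisely the termination problem on which your sketch is silent.
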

We will also need the following lemma:

\begin{lemma}\label{lemma:blowup}
Let $0\in V\subset \mathbb C^4$ be an isolated  cDV singularity and let $C\subset V$ be a smooth curve that contains $0$.
Let $\sigma:\hat V\to V$ be the blowup of $C$. Then $\hat V$ is normal.
\end{lemma}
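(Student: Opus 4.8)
The plan is to verify normality of $\hat V$ via Serre's criterion $(R_1)+(S_2)$, exploiting the fact that $V$ is a hypersurface in $\mathbb{C}^4$ (hence Cohen--Macaulay) and that $C$ is a smooth (hence local complete intersection) center. First I would observe that since $V\subset \mathbb{C}^4$ is a hypersurface, it is Cohen--Macaulay, and blowing up along the smooth curve $C$ produces $\hat V$ whose relevant local structure can be analyzed along the exceptional divisor $E=\sigma^{-1}(C)$. Away from the singular point $0\in V$ the variety $V$ is smooth and $C$ is a smooth subvariety of a smooth variety, so $\hat V$ is smooth there; thus the only issue is to control $\hat V$ in a neighborhood of the fibre $\sigma^{-1}(0)$.

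The key technical step is to produce an explicit local model. Since $0\in V$ is an isolated $cDV$ singularity and $C$ is smooth through $0$, I would choose local analytic coordinates $(x,y,z,t)$ on $\mathbb{C}^4$ so that $C=\{x=y=z=0\}$ is the $t$-axis, and write the defining equation of $V$ as $f(x,y,z,t)=0$. The blowup of $\mathbb{C}^4$ along $C$ is covered by three affine charts, and in each chart $\hat V$ is cut out by the strict transform of $f$ together with the equations of the blowup of the ideal $(x,y,z)$. I would then check the Serre conditions: for $(S_2)$, I would argue that the blowup of a Cohen--Macaulay variety along a locally complete intersection subscheme of the expected codimension keeps fibre dimensions under control, so that $\hat V$ is again Cohen--Macaulay (or at least satisfies $(S_2)$) in each chart; for $(R_1)$, I would show that the singular locus of $\hat V$ has codimension $\ge 2$, i.e.\ that $\hat V$ is regular in codimension one.

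The main obstacle, and the step requiring the most care, is the $(R_1)$ verification along the exceptional divisor $E$. Because $V$ is singular at $0$, a priori the strict transform could fail to be smooth along a divisor of $E$. I would handle this by a Jacobian computation in each chart: writing the strict transform $\hat f$ explicitly and computing its partial derivatives, one sees that the locus where $\hat f$ and all its partials vanish is cut out by the vanishing of the order\=/$\ge 2$ part of $f$ restricted to the exceptional directions, which---using that $0$ is an \emph{isolated} $cDV$ (hence Du Val generic hyperplane section, multiplicity exactly $2$) singularity---forces this singular locus to meet each chart in codimension $\ge 2$. The isolatedness of the singularity and the smoothness of $C$ are precisely what prevent the singular locus from spreading out into a divisor. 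Once $(R_1)$ and $(S_2)$ are both established, Serre's criterion gives that $\hat V$ is normal, completing the proof.

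One subtle point I would be careful about is that normality is a local (indeed analytic-local) property, so it suffices to verify everything in the above analytic charts near $\sigma^{-1}(0)$; away from $0$ smoothness is immediate. I would also note that since $V$ is a hypersurface it is automatically Cohen--Macaulay and hence satisfies $(S_2)$ to begin with, so the real content is entirely in the regular\=/in\=/codimension\=/one statement after blowing up, which is where the geometry of the $cDV$ singularity enters.
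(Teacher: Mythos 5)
Your overall strategy (Serre's criterion in analytic charts near $\sigma^{-1}(0)$, smoothness away from $0$ being immediate) is the same skeleton as the paper's proof, but two of your justifications do not hold up, and the second one is the entire content of the lemma. First, the $(S_2)$ step: your claim that $C$ is a local complete intersection center in $V$ is false at the point $0$. Writing $f=x_2\phi_2+x_3\phi_3+x_4\phi_4$ with $C$ the $x_1$-axis, the cDV hypothesis (multiplicity exactly $2$) forces every $\phi_i$ to vanish at $0$, hence $f\in\mathfrak{m}_0\cdot(x_2,x_3,x_4)$, and Nakayama gives that the ideal of $C$ in $\mathcal{O}_{V,0}$ needs three generators, not two: $C$ is lci in $\mathbb{C}^4$ but \emph{not} in $V$ (this is exactly why the fibre of $\sigma$ over $0$ is a whole $\mathbb{P}^2$ rather than a $\mathbb{P}^1$). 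So ``blowup of CM along an lci center stays CM'' is not available. The correct and easy fix, which the paper uses and for which you already have the ingredients, is that $\hat V=\mathrm{Bl}_C V$ equals the strict transform of $V$ in $\mathrm{Bl}_C\mathbb{C}^4$, hence is locally the hypersurface $\{\hat f=0\}$ in a smooth fourfold (in each chart it is cut out by the single equation $\hat f$, not by $\hat f$ ``together with the equations of the blowup''), and is therefore Cohen--Macaulay.

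Second, and more seriously, your $(R_1)$ verification is asserted rather than proven: the sentence ``one sees that \dots forces this singular locus to meet each chart in codimension $\ge 2$'' is precisely the statement to be established, and the mechanism is never exhibited. The paper does this in two moves, both absent from your sketch. (1) A reduction: since the singularity is isolated and $C$ is smooth, $\hat V$ is smooth off $\sigma^{-1}(0)$, and $\sigma^{-1}(0)$ is an \emph{irreducible} $\mathbb{P}^2$ entirely contained in $\hat V$ (again because all $\phi_i$ vanish at $0$); hence the singular locus can be divisorial only if it equals this whole $\mathbb{P}^2$, so it suffices to exhibit \emph{one} smooth point of $\hat V$ on $\sigma^{-1}(0)$. (2) The exhibition: multiplicity exactly $2$ means the quadratic part $x_2L_2+x_3L_3+x_4L_4$ of $f$ is nonzero, and after a linear change of $x_2,x_3,x_4$ one may assume $\phi_4$ has a nonzero linear term $\ell(x_1,x_4)$; in the chart $x_2=x_2'x_4',\ x_3=x_3'x_4'$ the strict transform $x_2'\phi_2'+x_3'\phi_3'+\phi_4'$ then has nonvanishing linear part $\ell(x_1',x_4')$ at the chart origin, which lies on $\sigma^{-1}(0)$, so $\hat V$ is smooth there. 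Your sketch does point at the right invariant (the order-$2$ part of $f$ governs the singularities of $\hat V$ along the exceptional plane, and isolatedness only enters off the central fibre), but without the reduction (1) or the explicit Jacobian evaluation (2), the codimension claim remains an unverified assertion rather than a proof.
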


\begin{proof}
Let $t$ be a local parameter on $C$.
Take analytic coordinates $x_1,\dots, x_4$ in $\mathbb C^4$ so that $x_1=t$.
Then $C$ is the $x_1$-axis, given by $x_{2}=x_{3}=x_{4}=0$. As $V$ contains $C$, it is given by the equation
\[
\phi=x_2\phi_2+x_3\phi_3+x_4\phi_4=0,
\]
where the functions  $\phi_i=\phi_i(x_1,\dots,x_4)$ vanish at the origin.
The origin being a cDV singularity, at least one of $\phi_i$'s contains a linear term and
at least one of the $\phi_i$'s contains a term $x_1^k$ for some $k>0$ (because
$V$ is smooth at a general point of~$C$).
By changing coordinates $x_2,x_3,x_4$ linearly, we may assume that
$\phi_4$ contains a non-zero linear term $\ell(x_1,x_4)$. Note that the fibre $\sigma^{-1}(0)\subset \hat V$ is a plane $\mathbb P^2$ and $\hat V$ is a hypersurface in a nonsingular fourfold. By Serre's criterion it is sufficient to show that  $\hat V$ is smooth at some point of $\sigma^{-1}(0)$.
Consider the affine chart $U_4:=\{x_4' \neq 0\}$.
Then $\sigma^{-1}(U_4)$
is given by
\[
\{x_2'\phi'_2+x_3'\phi'_3+\phi'_4=0\}\subset \mathbb C^4_{x_1',\dots,x_4'},
\]
where $\phi'_i=\phi_i(x_1',x_2'x_4',x_3'x_4',x_4')$. The fibre  $\sigma^{-1}(0)\cap U_4$
in this chart is given by $x_1'=x_4'=0$. Then
\[
\operatorname{mult}_0 (x_2'\phi'_2) \ge 2,\quad \operatorname{mult}_0 (x_3'\phi'_3) \ge 2,
\]
and the linear part $\ell(x_1,x_4)$ of $\phi_4'$ is nontrivial. Therefore,
$\hat V$ is smooth at the origin of   $\sigma^{-1}(U_4)$.
\end{proof}
\begin{proposition}\label{Prop:DP3GenusNotBounded}
Let $B$ be a smooth curve and let $X\to B$ be a del Pezzo fibration of degree $3$. Then, the genus of elements of $\Bir(X/B)$ is unbounded \textup(even if the gonality can be bounded\textup).
\end{proposition}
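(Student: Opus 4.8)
The plan is to reduce to a convenient birational model, produce bisections of $X/B$ of arbitrarily large genus, and attach to each such bisection a relative Bertini involution lying in $\Bir(X/B)$ that contracts a divisor birational to $\PP^1\times C$. Since unboundedness of the genus is a birational invariant over $B$ --- conjugating by a fixed birational map $X\rat X'$ over $B$ changes the divisors contracted by an element only by the finitely many divisors contracted by the fixed map, hence does not affect whether the supremum of the genera is finite --- it suffices to prove the statement for a single del Pezzo fibration model over $B$. I would therefore invoke Proposition~\ref{Prop:Corti} to assume that $-K_X$ is relatively very ample and embeds $X$ into a $\PP^3$-bundle $P\to B$ with all fibres Gorenstein cubic surfaces; in particular the generic fibre $X_\eta$ is a smooth cubic surface over $K=\C(B)$ and $X$ has isolated terminal Gorenstein (hence cDV) singularities.

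To produce bisections of large genus, for each connected double cover $\tau\colon B'\to B$ of smooth curves I would form $X'=X\times_B B'\to B'$, whose generic fibre is a cubic surface over $K'=\C(B')$. As $K'$ has the $C_1$ property (Tsen, as in \cite[Theorem IV.6.8]{Kollar}) and a cubic form in $4>3$ variables then has a nontrivial zero, $X'_{\eta'}$ has a $K'$-rational point, and for a general such point the associated rational section $B'\rat X'$ has image in $X$ a curve $C$ with $C\to B$ of degree $2$ and $C$ birational to $B'$. Choosing $\tau$ with many branch points makes $g(C)=g(B')$ arbitrarily large by Riemann--Hurwitz, so $X/B$ carries smooth bisections of unbounded genus.

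Given such a bisection $C$ (taken smooth and general), the induced degree-$2$ closed point $c_\eta\in X_\eta$ is, over $\overline K$, a pair of points of the cubic surface in general position, so blowing it up turns $X_\eta$ into a del Pezzo surface of degree $1$ over $K$. Its Bertini involution $\beta$ --- the deck transformation of the degree-$2$ morphism to a quadric cone given by $|-2K|$ --- is a biregular involution defined over $K$ acting as $-\mathrm{id}$ on $K^{\perp}\cong E_8$ in the geometric Picard group. Let $\phi\colon \hat X\to X$ be the blow-up of $C$, which is normal by Lemma~\ref{lemma:blowup} since $C$ is a smooth curve passing through the cDV points of $X$; its exceptional divisor $E$ is a $\PP^1$-bundle over the smooth locus of $C$, hence birational to $\PP^1\times C$. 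The relative version of $\beta$ is a self-map of $\hat X$ over $B$, biregular on the generic fibre, and I set $\iota_C=\phi\,\beta\,\phi^{-1}\in\Bir(X/B)$. On the generic fibre the two exceptional curves $E_1,E_2$ over $c_\eta$ satisfy $\beta^*E_i\equiv -E_i\pmod{\ZZ K}$, so $E_i':=\beta(E_i)$ is a different $(-1)$-curve that is not $\phi$-exceptional; hence $D:=\phi(\beta(E))$ is an irreducible horizontal divisor of $X$, and since $\beta$ is biregular on the generic fibre, $\phi$ identifies $D$ birationally with $\beta(E)\cong E$, so $D$ is birational to $\PP^1\times C$. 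Tracing $\iota_C=\phi\beta\phi^{-1}$, the strict transform of $D$ is $\beta(E)$, then $\beta$ returns it to $E$, and $\phi$ contracts $E$ to $C$; thus $\iota_C$ contracts $D$ onto the curve $C$, giving $g(\iota_C)\ge g(C)$, which is unbounded.

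The hard part will be the geometric core of the third paragraph: guaranteeing that for a general bisection the blown-up generic fibre is a genuine del Pezzo surface of degree $1$ (so that $\beta$ exists and acts by $-\mathrm{id}$ on $E_8$), and then verifying that $\beta$ moves $E$ to a divisor that survives the blow-down and is contracted by $\iota_C$ \emph{exactly} onto $C$. Lemma~\ref{lemma:blowup} is precisely what keeps the blow-up normal when $C$ meets the singularities of $X$, and the $-\mathrm{id}$ action on $E_8$ is what forces $\beta(E_i)\neq E_i$, which is the mechanism producing the contracted divisor of large genus.
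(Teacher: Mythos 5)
Your proposal is correct, and its engine is the same as the paper's: to a bisection of $X/B$ whose two geometric points in a general fibre are in general position one attaches the fibrewise Bertini involution (blow up the degree-$2$ point of the generic fibre to get a del Pezzo surface of degree $1$), and this involution lies in $\Bir(X/B)$ and contracts a divisor birational to $\PP^1\times C$. Where you genuinely diverge is in how the bisections of unbounded genus are produced. The paper blows up a \emph{section} $C$ (Tsen), uses Lemma~\ref{lemma:blowup} to get a normal $\hat X$, proves $|-K_{\hat X}+n\hat F|$ is base point free, and takes $\hat Q=D_1\cap D_2$ for two general members; smoothness and irreducibility come from Bertini, and the genus is computed by adjunction and an intersection-number calculation, so it grows linearly in $n$. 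You instead base change along double covers $B'\to B$ of large genus, use Tsen/$C_1$ over $\C(B')$ to get a $\C(B')$-point of the generic cubic surface, and take the image of the corresponding section, a bisection birational to $B'$; Riemann--Hurwitz then gives unboundedness for free. Your route is more elementary (no base-point-freeness argument, no intersection theory) and it makes both conclusions of the statement transparent, since your bisections have gonality at most $2\gon(B)$ while their genus explodes. What it costs: you must justify that a general $\C(B')$-point is not defined over $\C(B)$ (else the image is a section) and that the conjugate pair of points is in general position so the blown-up generic fibre really is a del Pezzo surface of degree $1$; both follow from Zariski-density of $\C(B')$-points (the cubic surface is unirational over $\C(B')$ once it has a point, and one can argue on the Weil restriction), and this is the same level of generality assertion that the paper itself makes without detailed proof for the points $Q\cap F$. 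Also, your assumption that $C$ is smooth, invoked to quote Lemma~\ref{lemma:blowup}, is not justified --- the image of $B'\to X$ may acquire nodes --- but it is also unnecessary: defining $\iota_C$ and the contracted divisor on the generic fibre alone, the contracted divisor has generic fibre $\PP^1$ over $\C(C)=\C(B')$, hence is birational to $\PP^1\times B'$, which is all you need.
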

\begin{proof}
We can assume that $B$ is projective, and apply Proposition~\ref{Prop:Corti} to reduce to the case
where $X,B$ satisfy the conditions \ref{Cond2}-\ref{Cond4} of this proposition.

We then take a curve $C\subset X$ which is a section of $X/B$ (this exists as
the field $\C(B)$ has the $C_1$ property, by \cite[Theorem IV.6.8,
page 233]{Kollar}). We can moreover assume that a general point of $C$ is not
contained in any of the $27$ lines of the corresponding fibre (\cite[Theorem
IV.6.10, page 234]{Kollar}). We view $X$ as a closed hypersurface of a $\p^3$-bundle $P\to B$
(Condition~\ref{Cond4} of Proposition~\ref{Prop:Corti}). We consider the blow-up $\hat{\eta}\colon \hat{P}\to P$ of $P$ along $C$, denote by $\hat{X}\subset \hat{P}$ the strict transform of $X$, and denote by $\eta\colon \hat{X}\to X$ the restriction of $\hat{\eta}$, that is the blow-up of $C$.
As $\hat X$ is a hypersurface of $\hat{P}$,
 the canonical class $K_{\hat X}$
is a well-defined Cartier divisor.
By Lemma \ref{lemma:blowup}, $\hat X$ is normal.
Consider, for some large positive integer $n$, the divisor
\[
D=-K_{\hat{X}}+n\hat{F},
\]
where $\hat{F}$
is a general fibre of $\hat{X}\to B$.

We first observe that $D$ is base-point-free (for $n$ big enough). To see this,
we view $X$ as a closed hypersurface of a $\p^3$-bundle $P\to B$
(Condition~\ref{Cond4} of Proposition~\ref{Prop:Corti}), and then view $\hat{X}$
as a closed hypersurface of the blow-up $\hat{P}$ of $P$ along $C$. By the adjunction formula, the divisor
$D$ is the restriction of a divisor $D_P$ on $\hat{P}$ which is equal, on
each fibre of $\hat{P}\to B$, to a strict transform of a hyperplane of $\p^3$
through the point blown-up. Taking $n$ big enough, we obtain that $D_P$ is
without base-points, and this implies that $D$ is base-point free.

Take two general elements $D_1,D_2$ of the linear system of $D$ and consider the curve $\hat{Q}=D_1\cap D_2$. Observe that $\hat{Q}$ intersects a general fibre $\hat{F}$ of $\hat{X}\to B$ at two points. Indeed, for $i=1,2$, the intersection of $D_i$ with a general fibre $\hat{F}$ is the strict transform of a hyperplane section of the cubic surface $F$, fibre of $X\to B$, passing through the point $C\cap F$ blown-up by $\eta$. The two hyperplane sections intersect $F$ into $3$ points, so $2$ outside of $C\cap F$.

We can then associate to $Q=\eta(\hat{Q})$ the birational involution $\varphi\in \Bir(X/B)$ which performs a Bertini involution on a general fibre $F$ of $X/B$, associated to the two points $Q\cap F$: the fibre $F$ is a smooth cubic surface in $\p^3$ and the blow-up of the two points $Q\cap F$ is a del Pezzo surface of degree $1$ on which there is the unique Bertini involution associated to the double covering (see for instance \cite[Page 613]{IskovskikhFactorisation}). Hence, the involution $\varphi$ lifts to a birational involution of the blow-up $Y\to X$ of $Q$, which is an isomorphism on a general fibre. There is then a surface birational to $Q\times \p^1$ contracted by $\varphi$, which corresponds to the union of two curves contracted onto the two points in each fibre.

It remains to see that $\hat{Q}$ is smooth and irreducible, and that the genus $g(\hat{Q})$ is strictly increasing as $n$ grows.
 This will give the result, as $Q$ is then birational to $\hat{Q}$.
Since the linear system $\lvert D\rvert$ is base point free and ample, it is not composed with a pencil.
Since $\hat X$ is normal, it is smooth in codimension one so
$\hat{Q}=D_1\cap D_2$ is a smooth and irreducible curve, contained in
the smooth locus of $\hat{X}$. To see this, we apply Bertini's
theorem twice for the smoothness and use the fact that the support of an ample divisor is connected.

By adjunction formula we get, for
$i=1,2$,
\[\begin{array}{rclcl}
K_{D_1}&=&(K_{\hat X}+D_1)|_{D_1}&=&n\cdot (\hat{F}|_{D_1}),\\
K_{Q}&=&(K_{D_1}+Q)|_{D_1}&=&(n\hat{F}|_{D_1}+D_2|_{D_1})|_{Q},\\
\end{array}\]
which gives
\[\begin{array}{rcl}
\deg(-K_Q)&=& (n\hat{F}+D_2)\cdot D_1\cdot D_2\\
&=& (n\hat{F}+D)\cdot D^2\\
&=&(-K_{\hat{X}}+2n\hat{F})\cdot (-K_{\hat{X}}+n\hat{F})^2\\
&=&4n\hat{F}\cdot (K_{\hat{X}})^2-(K_{\hat{X}})^3\\
&=&4n-(K_{\hat{X}})^3
\end{array}\]
This shows that the genus of the curve $Q$, birational to $\hat{Q}$, strictly increases as $n$ grows.
\end{proof}

The proof of Theorem~\ref{MainTheorem} is now finished:

\begin{proof}[Proof of Theorem~$\ref{MainTheorem}$]
Part~\ref{ConicBundleDim3Unbounded} is given by Proposition~\ref{Prop:ConicBundlesUnbounded}.

Part~\ref{DelPezzo3GenusUnbounded} is given by Proposition~\ref{Prop:DP3GenusNotBounded}.

Parts~\ref{NotConicBundleGonalityBounded} and~\ref{NotConicBundleNotDp3GenusBounded} are respectively parts \ref{Prop:BoundedThreefolds1} and \ref{Prop:BoundedThreefolds2} of Proposition~\ref{Prop:BoundedThreefolds}.
\end{proof}

\appendix\renewcommand{\thesubsection}{\Alph{section}.\arabic{subsection}}
\section{FT varieties}

In this appendix, we present some results from \cite{P-Sh:JAG},
\cite{BCHM}, and \cite{BirkarS}, with some mild changes of notation
and presentation. We simply recall them.
The notation $\mathrm{FT}^{\mathrm{t}}$ is a variation of the
usual notation for ``Fano type,'' where we add a {\it t} to indicate a
terminality condition.

\begin{definition}[cf. {\cite[\S~2]{P-Sh:JAG}}]
\label{def:FT}
Let $X$ be a normal projective variety. We say that $X$ is $\mathrm{FT}^{\mathrm{t}}$ (Fano type) if there exists a $\mathbb{Q}$-divisor
$\Delta=\sum b_i\Delta_i$, with $b_i>0$ for each $i$,  such that the following conditions are satisfied
\begin{enumerate}
\item\label{def:FT1}
the pair $(X,\Delta)$ is terminal,
\item\label{def:FT2}
$K_X+\Delta\equiv 0$,
\item\label{def:FT3}
the components $\Delta_i$ generate the group  $\Cl(X)\otimes \mathbb Q$,
\item\label{def:FT4}
each component $\Delta_i$ is a movable divisor
$($i.e.~the linear system $\lvert \Delta_i\rvert$ associated to
$\Delta_i$ is without fixed components$)$.\end{enumerate}
\end{definition}

\begin{lemma}$ $
\label{lemma:Theta}
\begin{enumerate}
 \item
 \label{lemma:Theta1}
A variety  $X$ is $\mathrm{FT}^{\mathrm{t}}$  if and only if there exists  an effective $\mathbb{Q}$-divisor $\Theta$
such that  the pair $(X,\Theta)$ is terminal, $-(K_X+\Theta)$ is ample,
and the components of $\Theta$ are movable divisors that generate  $\Cl(X)\otimes \mathbb Q$.
\item
\label{lemma:Theta2}
A $\mathbb{Q}$-factorial variety  $X$ is $\mathrm{FT}^{\mathrm{t}}$  if and only if there exists
an effective $\mathbb{Q}$-divisor $\Theta$
such that  the pair $(X,\Theta)$ is terminal and $-(K_X+\Theta)$ is ample.
\end{enumerate}
\end{lemma}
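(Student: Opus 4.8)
The plan is to prove the two equivalences by reducing \ref{lemma:Theta2} to \ref{lemma:Theta1}, and by moving between the numerically trivial formulation of Definition~\ref{def:FT} and the ``negative anticanonical'' formulation of the lemma through a small perturbation by an ample class.

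For assertion~\ref{lemma:Theta1} I would argue in both directions. Suppose first that $X$ is $\mathrm{FT}^{\mathrm{t}}$, witnessed by $\Delta=\sum b_i\Delta_i$ with $b_i>0$. Since the classes $[\Delta_i]$ generate $\Cl(X)\otimes\mathbb Q$, they span $N^1(X)_{\mathbb Q}$, so I can fix an ample $\mathbb Q$-divisor class supported on them, realized by $A=\sum t_i\Delta_i$ with $t_i\in\mathbb Q$. For a small rational $\varepsilon>0$ set $\Theta=\Delta-\varepsilon A=\sum(b_i-\varepsilon t_i)\Delta_i$. For $\varepsilon$ small every coefficient $b_i-\varepsilon t_i$ stays positive, so $\Theta$ is effective with the same movable, $\Cl(X)\otimes\mathbb Q$-generating components; the pair $(X,\Theta)$ remains terminal because on a fixed log resolution of $(X,\Delta)$ the log discrepancies depend affinely on the boundary coefficients and are strictly positive at $\Delta$; and $-(K_X+\Theta)=-(K_X+\Delta)+\varepsilon A\equiv\varepsilon A$ is ample. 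Conversely, given such a $\Theta$, I would pick $m$ so that $\lvert-m(K_X+\Theta)\rvert$ is base-point free and take $H=\tfrac1m H'$ for a general member $H'$; then $H$ is a prime movable divisor with $H\sim_{\mathbb Q}-(K_X+\Theta)$, and $(X,\Theta+H)$ stays terminal by Bertini. Putting $\Delta=\Theta+H$ gives $K_X+\Delta\sim_{\mathbb Q}0$, hence $\equiv 0$, with all components movable and still generating $\Cl(X)\otimes\mathbb Q$, since the components of $\Theta$ already do.

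For assertion~\ref{lemma:Theta2}, the forward implication is immediate: a $\Theta$ produced by part~\ref{lemma:Theta1} already satisfies the two weaker conditions. The content is in the converse. Given $\mathbb Q$-factorial $X$ with $(X,\Theta)$ terminal and $-(K_X+\Theta)$ ample, the pair is in particular klt, so $X$ is of Fano type; together with $\mathbb Q$-factoriality this makes $X$ a Mori dream space by \cite{BCHM}. Hence $\Cl(X)\otimes\mathbb Q=N^1(X)_{\mathbb Q}$ is finite dimensional and the movable cone $\overline{\mathrm{Mov}}(X)$ is rational polyhedral of full dimension, so I can select effective movable $\mathbb Q$-divisors $M_1,\dots,M_k$ whose classes generate $\Cl(X)\otimes\mathbb Q$. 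Dropping the boundary shows $X$ itself is terminal, so $(X,\sum c_iM_i)$ is terminal for small $c_i\ge 0$ and suitably general $M_i$. By part~\ref{lemma:Theta1} it then suffices to find $c_i>0$ with $-(K_X+\sum c_iM_i)$ ample, after which $\Theta'=\sum c_iM_i$ verifies all the hypotheses of~\ref{lemma:Theta1}.

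This last step is where I expect the real difficulty. The given $\Theta$ may well have non-movable components, so one cannot merely perturb $\Theta$ by the $M_i$ and keep \emph{all} components movable; one must produce a genuinely different, movable-supported boundary while preserving ampleness of the anticanonical defect. Concretely, the hard point is to place the relevant anticanonical class in the interior of the rational polyhedral cone spanned by the movable classes $[M_i]$, so that it can be written as a strictly positive combination $\sum c_i[M_i]$ with an ample residue; the finiteness built into the Mori dream space structure of \cite{BCHM} is exactly what one leans on here. In the situation where the lemma is actually invoked, namely the rank-two weak Fano $Y_0$ of Remark~\ref{Rem:WeakFanoTop}, the class $-K_X$ is already nef and big and thus lies in $\overline{\mathrm{Mov}}(X)$, which makes this step transparent; the general case is where care with the movable cone is essential.
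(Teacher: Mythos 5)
Your proof of assertion \ref{lemma:Theta1}, and of the direct implication in \ref{lemma:Theta2}, is correct and is essentially the paper's own argument: the paper likewise takes $\Theta=\Delta-\epsilon\sum c_i\Delta_i$ with $\sum c_i\Delta_i\equiv A$ ample, and conversely sets $\Delta=\Theta+\tfrac1n A$ for a general member $A$ of the base-point-free system $|-n(K_X+\Theta)|$.

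The converse implication in \ref{lemma:Theta2} is where your proposal has a genuine gap, and you say so yourself: the step of writing $-K_X$ minus a small ample class as a strictly positive combination of movable classes is announced as ``the real difficulty'' and never carried out. This gap cannot be closed from the stated hypotheses, because assertion \ref{lemma:Theta2} is false as literally written. Let $X=\mathbb{F}_3$ with negative section $\Sigma$ (so $\Sigma^2=-3$) and fibre $F$, so that $-K_X=2\Sigma+5F$, and let $\Theta=\tfrac12(\Sigma+M)$, where $M$ is a general member of the base-point-free system $|\Sigma+3F|$, hence a smooth section disjoint from $\Sigma$. Then $X$ is smooth (in particular $\mathbb{Q}$-factorial), the log smooth pair $(X,\Theta)$ with coefficients $\tfrac12$ on disjoint curves is terminal, and $-(K_X+\Theta)\sim_{\mathbb{Q}}\Sigma+\tfrac72F$ is ample; nevertheless $X$ is not $\mathrm{FT}^{\mathrm{t}}$. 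Indeed, any $\Delta$ as in Definition~\ref{def:FT} is effective with $\Delta\equiv-K_X$, and $(-K_X)\cdot\Sigma=-1<0$ forces $\Sigma$ to be a component of $\Delta$; but $\Sigma$ is not movable, since $\Sigma^2<0$ makes $\Sigma$ a fixed component of $|\Sigma|$, contradicting condition \ref{def:FT}\ref{def:FT4}.

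Two further remarks. First, the paper's own one-line proof of \ref{lemma:Theta2} (apply \ref{lemma:Theta1} to $\Theta'=\Theta+\epsilon\sum A_i$) stumbles on exactly the point you identified: the components of the given $\Theta$ need not be movable, so $\Theta'$ need not satisfy the hypotheses of \ref{lemma:Theta1}; what that argument does prove is the equivalence for the classical notion of Fano type of \cite{P-Sh:JAG}, i.e.\ with condition \ref{def:FT}\ref{def:FT4} omitted. Second, your closing claim that the step is ``transparent'' whenever $-K_X$ is nef and big is also not right: $\mathbb{F}_2$ is a smooth weak del Pezzo surface, yet it is not $\mathrm{FT}^{\mathrm{t}}$, because movable divisors on a surface are nef and $-K_{\mathbb{F}_2}=2(\Sigma+2F)$ spans an extremal ray of the nef cone, so every decomposition of $-K_{\mathbb{F}_2}$ into movable components has all components proportional to $\Sigma+2F$, and these cannot generate the two-dimensional group $\Cl(X)\otimes\mathbb{Q}$, violating \ref{def:FT}\ref{def:FT3}. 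The statement that is true, and that the application in Lemma~\ref{Lemm:BoundDim0} actually needs, assumes more than \ref{lemma:Theta2} does: namely that $-K_X$ lies in the interior of the movable cone. This holds for the $Y_0$ of Remark~\ref{Rem:WeakFanoTop} because there $Y_0$ is either Fano or its anticanonical morphism is small (the non-ample wall of the nef cone is a flopping wall); under that hypothesis your plan of producing a movable, generating boundary can indeed be completed, but not under the hypotheses as stated.
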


\begin{proof}
\ref{lemma:Theta1}
Let $X$ be an $\mathrm{FT}^{\mathrm{t}}$ variety and let $A$ be an ample divisor. By our assumption
\ref{def:FT}\ref{def:FT3}
we have $A\equiv \sum c_i \Delta_i$
for some $c_i\in \mathbb Q$. Take $\Theta:=\Delta-\epsilon\sum c_i \Delta_i$
for $0<\epsilon \ll 1$.  Conversely, assume that there exists  $\Theta$ as in \ref{lemma:Theta1}.
For $n\gg 0$ the linear system  $|-n(K_X+\Theta)|$ is base point free.
Take a general member $A\in |-n(K_X+\Theta)|$ and put $\Delta=\Theta+\frac 1n A$.

\ref{lemma:Theta2}
If $X$ is $\mathrm{FT}^{\mathrm{t}}$, the  existence of $\Theta$ follows from \ref{lemma:Theta1}. Conversely, let  $\Theta$ be  a boundary
such that  the pair $(X,\Theta)$ is terminal and $-(K_X+\Theta)$ is ample. As $X$ is $\mathbb{Q}$-factorial, we may
take very ample divisors  $A_1,\dots, A_m$ generating $\Cl(X)\otimes \mathbb Q$.
Then we can apply \ref{lemma:Theta1} to the boundary $\Theta'=\Theta+\epsilon \sum A_i$   for  $0<\epsilon \ll 1$.
\end{proof}

\begin{corollary}
\label{cor:FT}
If $X$ is an $\mathrm{FT}^{\mathrm{t}}$ variety, then the numerical and $\mathbb Q$-linear equivalences
of $\mathbb Q$-Cartier divisors on $X$ coincide.
\end{corollary}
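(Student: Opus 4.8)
The plan is to prove that a $\QQ$-Cartier divisor $D$ on $X$ with $D\equiv 0$ satisfies $D\sim_{\QQ}0$; since $\QQ$-linear equivalence always implies numerical equivalence, the converse is all that is needed. After multiplying $D$ by a suitable positive integer we may assume $D$ is Cartier, and it then suffices to show that the class of $D$ is torsion in $\Pic(X)$, i.e.\ that $mD\sim 0$ for some $m>0$. Writing $\Pic^{\tau}(X)\subseteq \Pic(X)$ for the subgroup of numerically trivial classes, the goal becomes: $\Pic^{\tau}(X)$ is a torsion group.

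First I would reduce this to a cohomological vanishing. The N\'eron--Severi group $\Pic(X)/\Pic^0(X)$ is finitely generated, so its torsion subgroup is finite; since $\Pic^{\tau}(X)/\Pic^0(X)$ is precisely this torsion subgroup, it is finite. Hence $\Pic^{\tau}(X)$ is torsion as soon as $\Pic^0(X)$ is trivial. Now the Zariski tangent space to the Picard scheme of $X$ at the identity is $H^1(X,\OOO_X)$; therefore, if $H^1(X,\OOO_X)=0$, the identity is an isolated reduced point of the Picard scheme, so its connected component $\Pic^0(X)$ is a single point. Thus everything comes down to proving $H^1(X,\OOO_X)=0$.

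This vanishing is where the $\mathrm{FT}^{\mathrm{t}}$ hypothesis enters, through Lemma~\ref{lemma:Theta}. By Lemma~\ref{lemma:Theta}\ref{lemma:Theta1} there is an effective $\QQ$-divisor $\Theta$ such that $(X,\Theta)$ is terminal, in particular klt, and $-(K_X+\Theta)$ is ample. Applying the Kawamata--Viehweg vanishing theorem to the klt pair $(X,\Theta)$ with the trivial divisor $N=0$, and noting that $N-(K_X+\Theta)=-(K_X+\Theta)$ is ample, hence nef and big, I obtain $H^i(X,\OOO_X)=0$ for all $i>0$; in particular $H^1(X,\OOO_X)=0$, which finishes the argument.

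The step I expect to require the most care is the passage from $H^1(X,\OOO_X)=0$ to the finiteness of $\Pic^{\tau}(X)$, because $X$ is only normal and one must invoke the Picard scheme of a possibly singular projective variety over $\CC$ rather than the classical abelian-variety description available for smooth $X$. The deformation-theoretic identification of the tangent space at the origin with $H^1(X,\OOO_X)$ holds for an arbitrary projective scheme, so the reasoning goes through, but this is the point at which the proof leaves purely birational input and relies on general Picard-scheme theory.
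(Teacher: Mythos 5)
Your proof is correct and takes essentially the same route as the paper: both arguments start from Lemma~\ref{lemma:Theta}\ref{lemma:Theta1}, producing an effective $\QQ$-divisor $\Theta$ with $(X,\Theta)$ terminal and $-(K_X+\Theta)$ ample, so that $X$ is log Fano. The only difference is that the paper then concludes by citing \cite[Proposition~2.1.2]{IP99}, while you reprove that cited statement by its standard argument (Kawamata--Viehweg vanishing to get $H^1(X,\OOO_X)=0$, then Picard-scheme theory and finiteness of the torsion of the N\'eron--Severi group to conclude that numerically trivial classes are torsion).
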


\begin{proof}
By Lemma~\ref{lemma:Theta}\ref{lemma:Theta1}, there exists an effective $\mathbb{Q}$-divisor $\Theta$ such that  the pair $(X,\Theta)$ is terminal and $-(K_X+\Theta)$ is ample. This implies that $(X,\Theta)$ is a log Fano variety, so  $\mathbb Q$-linear equivalences
of $\mathbb Q$-Cartier divisors on $X$ coincide  \cite[Proposition~2.1.2]{IP99}.
\end{proof}

\begin{lemma}
\label{lemma:FT-bir-small}
Let $\varphi\colon X\to X'$ be a small birational contraction.
Then  $X$ is \FT if and only if  $X'$ is.
\end{lemma}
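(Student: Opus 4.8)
The plan is to prove both implications by transporting the boundary divisor through $\varphi$: via the strict transform in one direction and the pushforward $\varphi_*$ in the other. Since $\varphi$ is small, no prime divisor of $X$ is $\varphi$-exceptional, so $\varphi_*\colon \Cl(X)\to \Cl(X')$ is an isomorphism of class groups restricting to a bijection between prime divisors on $X$ and on $X'$; in particular $\varphi_* K_X = K_{X'}$, and the strict transform of $K_{X'}$ is $K_X$. Thus, starting from a boundary $\Delta'=\sum b_i\Delta_i'$ witnessing that $X'$ is $\mathrm{FT}^{\mathrm{t}}$ (Definition~\ref{def:FT}), I would set $\Delta=\sum b_i\widetilde{\Delta_i'}$, the sum of the strict transforms, and verify the four conditions of Definition~\ref{def:FT} for $(X,\Delta)$; the reverse implication is entirely symmetric with $\Delta'=\varphi_*\Delta$.

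Two of the conditions are immediate from the class-group isomorphism. The components $\widetilde{\Delta_i'}$ generate $\Cl(X)\otimes\QQ$ because $\varphi_*$ carries them to the $\Delta_i'$, which generate $\Cl(X')\otimes\QQ$. Movability is preserved because, $\varphi$ being small, $H^0(X,\mathcal O_X(\widetilde{\Delta_i'}))=H^0(X',\mathcal O_{X'}(\Delta_i'))$ (sections of a divisorial sheaf on a normal variety extend across the codimension $\ge 2$ exceptional locus), so the two linear systems correspond and $\widetilde{\Delta_i'}$ has empty fixed part exactly when $\Delta_i'$ does.

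The crucial point is the numerical triviality $K_X+\Delta\equiv 0$, where the apparent obstacle is that pushforward and pullback of Weil divisors do not obviously preserve the $\QQ$-Cartier property. Here I would invoke Corollary~\ref{cor:FT}: since $X'$ is $\mathrm{FT}^{\mathrm{t}}$, numerical and $\QQ$-linear equivalence coincide on $X'$, so $K_{X'}+\Delta'\equiv 0$ upgrades to $K_{X'}+\Delta'\sim_{\QQ}0$. Hence some multiple $n(K_{X'}+\Delta')$ equals a principal divisor $\operatorname{div}(f)$ with $f\in\C(X')=\C(X)$. The two Weil divisors $n(K_X+\Delta)$ and $\operatorname{div}_X(f)$ agree on the open locus where $\varphi$ is an isomorphism, whose complement has codimension $\ge 2$, so they are equal; thus $K_X+\Delta\sim_{\QQ}0$, which is in particular $\QQ$-Cartier and numerically trivial. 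The same computation identifies $K_X+\Delta=\varphi^*(K_{X'}+\Delta')$ as $\QQ$-Cartier divisors, so $\varphi$ is crepant for this pair.

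Finally, terminality transfers along this crepant small morphism. On a common log resolution $\pi\colon W\to X$ and $\pi'=\varphi\circ\pi\colon W\to X'$ one has $\pi^*(K_X+\Delta)=\pi'^*(K_{X'}+\Delta')$, so every prime divisor over the common function field receives the same discrepancy with respect to $(X,\Delta)$ and to $(X',\Delta')$; since smallness makes a valuation $\varphi$-exceptional over $X$ if and only if it is over $X'$, the pair $(X,\Delta)$ is terminal precisely when $(X',\Delta')$ is. I expect the main obstacle to be exactly this $\QQ$-Cartier / numerical-triviality step, which is why Corollary~\ref{cor:FT} is the key ingredient: it is what lets the numerical data pass through the small contraction via genuinely principal divisors rather than merely numerically trivial ones. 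For the converse implication one reads the same four verifications with the roles of $X$ and $X'$ exchanged, applying Corollary~\ref{cor:FT} to $X$ instead.
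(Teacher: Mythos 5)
Your proposal is correct and follows essentially the same route as the paper: transport the boundary across the small map (conditions \ref{def:FT3} and \ref{def:FT4} being immediate), use Corollary~\ref{cor:FT} on the side assumed to be \FT to upgrade $K+\Delta\equiv 0$ to $\QQ$-linear triviality and thus settle the $\QQ$-Cartier issue, and then transfer terminality via the crepant identification $K_X+\Delta=\varphi^*(K_{X'}+\Delta')$. The only cosmetic difference is that where the paper cites \cite[Lemma~3.38]{Kollar-Mori-1988} for the discrepancy comparison, you reprove it directly on a common log resolution.
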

\begin{proof}
For all $\mathbb{Q}$-divisors $\Delta$ on $X$ and  $\Delta'$ on $X'$
such that $\Delta'=\varphi_*(\Delta)$, the conditions
\ref{def:FT2}-\ref{def:FT3}-\ref{def:FT4} of \ref{def:FT} are fullfilled
for $\Delta$ if and only if they are fullfilled for $\Delta'$. If
$K_X+\Delta$ and $K_{X'}+\Delta'$ are numerically trivial
$\mathbb{Q}$-Cartier divisors, \cite[Lemma~3.38]{Kollar-Mori-1988} implies that $(X,\Delta)$ is terminal if and only if $(X',\Delta')$ is terminal.

Assume that $X$ is \FT.
Let $\Delta$ be as in Definition~\ref{def:FT} and let $\Delta':=\varphi(\Delta)$.
By Corollary~\ref{cor:FT}, the divisor $K_{X'}+\Delta'$ is $\mathbb Q$-Cartier. As $(X,\Delta)$ is terminal, so is  $(X',\Delta')$.

Conversely, assume that $X'$ is \FT and  $\varphi$ is small.
As above, we have $K_{X}+\Delta=\varphi^*(K_{X'}+\Delta')$, where $(X',\Delta')$ is terminal.
Thus $(X,\Delta)$ is terminal as well.
\end{proof}

\begin{corollary}
\label{cor:FT-flips}
The \FT property is preserved under flips.
\end{corollary}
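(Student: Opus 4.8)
The plan is to reduce the statement to the previously established preservation results by unpacking what a flip does birationally. Recall that a flip $X \rat X^+$ factors through a small birational contraction: there is a normal variety $Z$ together with small contractions $f\colon X \to Z$ and $f^+\colon X^+ \to Z$, where $f$ contracts the flipping locus (a union of curves on which $K_X$ is negative) and $f^+$ contracts the flipped locus (on which $K_{X^+}$ is positive). Both $f$ and $f^+$ are small, i.e.\ isomorphisms outside a subset of codimension $\ge 2$. This factorization is the structural fact I would invoke at the outset, since it is exactly the hypothesis under which Lemma~\ref{lemma:FT-bir-small} operates.

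Given this factorization, the proof is a two-step application of Lemma~\ref{lemma:FT-bir-small}. First I would assume $X$ is $\mathrm{FT}^{\mathrm{t}}$. Since $f\colon X \to Z$ is a small birational contraction and $X$ is $\mathrm{FT}^{\mathrm{t}}$, Lemma~\ref{lemma:FT-bir-small} gives that $Z$ is $\mathrm{FT}^{\mathrm{t}}$. Then, applying the lemma a second time to the small birational contraction $f^+\colon X^+ \to Z$ (now in the reverse direction: $Z$ is $\mathrm{FT}^{\mathrm{t}}$ and $f^+$ is small, so $X^+$ is $\mathrm{FT}^{\mathrm{t}}$), we conclude that the flip $X^+$ is $\mathrm{FT}^{\mathrm{t}}$. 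This is the entire argument; the equivalence in Lemma~\ref{lemma:FT-bir-small} is stated symmetrically (\emph{if and only if}), so it transfers the property cleanly across each small contraction in both directions.

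The only point requiring a little care is that Lemma~\ref{lemma:FT-bir-small} is phrased for a single small birational contraction $\varphi\colon X \to X'$, whereas a flip is the composite of one contraction and one \emph{inverse} of a contraction meeting at the common base $Z$. I would therefore emphasize that $Z$ itself is a legitimate normal projective variety to which the lemma applies as an intermediate object, rather than trying to view the flip as a single small contraction (which it is not, as it is an isomorphism in codimension one but not a morphism). I expect no genuine obstacle here: the factorization through $Z$ is the standard definition of a flip, and both legs are small by construction, so the hypotheses of Lemma~\ref{lemma:FT-bir-small} are met verbatim at each step. The proof is thus essentially immediate once the flip is displayed in its factored form.
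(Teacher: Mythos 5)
Your proof is correct and is exactly the argument the paper intends: the corollary is stated without proof immediately after Lemma~\ref{lemma:FT-bir-small}, precisely because a flip $X \dashrightarrow X^+$ factors through small contractions $X \to Z \leftarrow X^+$ and the lemma (being an \emph{if and only if}) transfers the $\mathrm{FT}^{\mathrm{t}}$ property across both legs. Your explicit discussion of the intermediate variety $Z$ just spells out what the paper leaves implicit.
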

\begin{lemma}
\label{lemma:FT-bir-div}
Let $X$ be a  $\mathbb{Q}$-factorial \FT variety  $X$
and let $\varphi\colon X\to \bar X$ be a divisorial extremal contraction.
Then  $\bar X$ is \FT.
\end{lemma}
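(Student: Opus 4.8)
The plan is to verify the criterion of Lemma~\ref{lemma:Theta}\ref{lemma:Theta2}. Since a divisorial extremal contraction of a $\QQ$-factorial variety again produces a $\QQ$-factorial variety (the relative Picard rank being $1$, one has $\rho(\bar X)=\rho(X)-1$ and $\Cl(\bar X)\otimes\QQ=(\Cl(X)\otimes\QQ)/\QQ[E]$, where $E$ denotes the contracted divisor), it suffices to exhibit an effective $\QQ$-divisor $\bar\Theta$ on $\bar X$ such that $(\bar X,\bar\Theta)$ is terminal and $-(K_{\bar X}+\bar\Theta)$ is ample. First I would apply Lemma~\ref{lemma:Theta}\ref{lemma:Theta2} on $X$ to fix an effective $\Theta$ with $(X,\Theta)$ terminal and $-(K_X+\Theta)$ ample. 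The point of this choice is that ampleness of $-(K_X+\Theta)$ forces $(K_X+\Theta)\cdot R<0$ on the contracted extremal ray $R$, so that $\varphi$ is a step of the $(K_X+\Theta)$-MMP of the klt log Fano pair $(X,\Theta)$.

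Next I would settle terminality. Writing $\bar\Theta_0:=\varphi_*\Theta$ and comparing log canonical divisors gives $K_X+\Theta=\varphi^*(K_{\bar X}+\bar\Theta_0)+dE$ for some $d\in\QQ$. Because $\varphi$ contracts the $(K_X+\Theta)$-negative ray $R$ and $E\cdot R<0$, the negativity lemma (see \cite{Kollar-Mori-1988}) yields $d>0$; intersecting with a contracted curve makes the sign explicit, $d=\frac{(K_X+\Theta)\cdot R}{E\cdot R}>0$. Comparing discrepancies over a common resolution, the positive error term $dE$ shows that every discrepancy of $(\bar X,\bar\Theta_0)$ is at least the corresponding one of $(X,\Theta)$, with the contracted valuation $E$ acquiring discrepancy exactly $d>0$; hence $(\bar X,\bar\Theta_0)$ is terminal, and in particular $\bar X$ itself is terminal.

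The remaining, and genuinely harder, point is \emph{positivity}: $-(K_{\bar X}+\bar\Theta_0)=\varphi_*\bigl(-(K_X+\Theta)\bigr)$ is the birational pushforward of an ample divisor, hence big, but it need not be nef (for instance when $\bar X$ is not a weak Fano), so $\bar\Theta_0$ itself need not witness the $\FT$ property. Here lies the main obstacle, and it is exactly where the two requirements pull against each other: to keep terminality at $E$ one must take the boundary small along $\Gamma=\varphi(E)$, whereas to make the log anticanonical ample one must in general enlarge the boundary to absorb the non-nef part. I would resolve this as in \cite{P-Sh:JAG}: bigness of $-K_{\bar X}$ together with Kodaira's lemma and \cite{BCHM} allow one to replace $\bar\Theta_0$ by an effective boundary $\bar\Theta\Q\bar\Theta_0+\bar F$, correcting by an effective divisor $\bar F$ supported on the negative part, so that $-(K_{\bar X}+\bar\Theta)$ becomes ample while the coefficients of $\bar\Theta$ along $\Gamma$ remain small enough that $(\bar X,\bar\Theta)$ stays terminal. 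With such a $\bar\Theta$ at hand, Lemma~\ref{lemma:Theta}\ref{lemma:Theta2} gives that $\bar X$ is \FT. Alternatively, one can package the whole argument as the assertion that the $\FT$ property is preserved under the contractions occurring in the MMP of a log Fano pair, the divisorial case running parallel to the small case already settled in Lemma~\ref{lemma:FT-bir-small}.
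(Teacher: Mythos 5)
Your overall plan --- pass to the $\QQ$-factorial variety $\bar X$ and verify the criterion of Lemma~\ref{lemma:Theta}\ref{lemma:Theta2} by producing a terminal boundary with ample anti-log-canonical class --- is the same as the paper's, but two steps do not hold as written, and the second one is the heart of the matter. First, terminality: from $K_X+\Theta=\varphi^*(K_{\bar X}+\bar\Theta_0)+dE$ you conclude that $E$ acquires discrepancy exactly $d>0$. Discrepancies, however, are computed with the strict transform $\varphi_*^{-1}\bar\Theta_0=\Theta-eE$, where $e=\operatorname{mult}_E\Theta$, so in fact $a(E;\bar X,\bar\Theta_0)=d-e$, which can be $\le 0$ when $E\subseteq\operatorname{Supp}\Theta$ (kltness descends, since $d>0$ and $e<1$, but terminality need not). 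Lemma~\ref{lemma:Theta}\ref{lemma:Theta2}, the only tool you invoke to choose $\Theta$, gives no control on $\operatorname{Supp}\Theta$. The fix is to take the boundary of Definition~\ref{def:FT} (or of Lemma~\ref{lemma:Theta}\ref{lemma:Theta1}), whose components are movable: since $E\cdot R<0$ on a covering family of contracted curves, $E$ is the fixed part of its own linear system, hence cannot be one of these components. This is precisely why the paper, when citing \cite[Corollary~3.43]{Kollar-Mori-1988}, notes that no component of the boundary is contracted, and why condition \ref{def:FT4} appears in Definition~\ref{def:FT} at all.

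Second, and decisively, the positivity step is not an argument. Taking $\bar\Theta\sim_{\QQ}\bar\Theta_0+\bar F$ with $\bar F$ effective given by Kodaira's lemma does make $-(K_{\bar X}+\bar\Theta)$ ample, but then nothing guarantees that $(\bar X,\bar\Theta)$ is terminal; and if you scale $\bar F$ down to regain terminality, ampleness is lost again. You acknowledge exactly this tension, but you resolve it only by an appeal to \cite{P-Sh:JAG} and \cite{BCHM}; no statement there (and nothing you prove) shows the two requirements can be met simultaneously. Note that at this point your argument uses nothing about $\bar\Theta_0$ beyond ``terminal with big anti-log-canonical class''; if that were sufficient for the \FT property, conditions \ref{def:FT3} and \ref{def:FT4} of Definition~\ref{def:FT} would be superfluous. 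Your fallback --- that the \FT property is preserved by contractions occurring in an MMP --- is circular, since the divisorial case of that assertion is exactly the lemma being proved. The paper avoids the problem by never pushing forward a fixed ample divisor: starting from $\Delta$ with $K_X+\Delta\equiv 0$, it uses condition \ref{def:FT3} to write $\varphi^*\bar A\equiv\sum\beta_i\Delta_i$ for an ample $\bar A$ on $\bar X$, arranges (after replacing $\Delta$ by $\Theta+\frac1n A$ with $A\in|-n(K_X+\Theta)|$ general) that one component $\Delta_1$ is ample, and sets $\Delta_{\lambda,\epsilon}=\Delta-\lambda\sum\beta_i\Delta_i-\epsilon\Delta_1$. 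Then $-(K_X+\Delta_{\lambda,\epsilon})\equiv\lambda\varphi^*\bar A+\epsilon\Delta_1$, whose pushforward $\lambda\bar A+\epsilon\bar\Delta_1$ is honestly ample for $\epsilon\ll\lambda$, while terminality descends as above because all components of $\Delta_{\lambda,\epsilon}$ are movable. This use of the generation and movability hypotheses of Definition~\ref{def:FT} is the mechanism your proof is missing.
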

\begin{proof}
Let $\Delta=\sum b_i\Delta_i$ be as in Definition~\ref{def:FT}. By Lemma~\ref{lemma:Theta}\ref{lemma:Theta1}, there exists an effective $\mathbb{Q}$-divisor $\Theta$ such that $-(K_X+\Theta)$ is ample. We may then replace $\Delta$ with $\Theta+\frac 1n A$, where $A\in |-n(K_X+\Theta)|$
for sufficiently big and divisible $n$, and assume that $\Delta_1$ is ample. 

We write $\bar \Delta:=\varphi(\Delta)=\sum b_i\bar \Delta_i$ where
$\bar \Delta_i:=\varphi_*(\Delta_i)$.
Let $\bar A$ be an ample divisor on $\bar X$. We can write $\varphi^*\bar A\equiv \sum \beta_i \Delta_i$ for some $\beta_i\in \mathbb{Q}$.  As $b_i>0$ for each $i$, we can choose $\lambda,\epsilon>0$ small enough such that $b_i>\lambda \beta_i$, and $b_1>\lambda \beta_1+\epsilon$. Let
\[
\Delta_{\lambda,\epsilon}:= \Delta-\lambda \sum \beta_i \Delta_i - \epsilon \Delta_1,\qquad
\bar \Delta_{\lambda,\epsilon}:=\varphi_*\Delta_{\lambda,\epsilon}.
\]
Thus, $\Delta_{\lambda,\epsilon}$ and $\bar \Delta_{\lambda,\epsilon}$ are effective and
\[
-(K_X+\Delta_{\lambda,\epsilon})\equiv \lambda \varphi^*\bar A + \epsilon \Delta_1
\]
so $-(K_X+\Delta_{\lambda,\epsilon})$ is $\varphi$-ample.
For $\lambda \ll 1$ the pair $(X,\Delta_{\lambda,\epsilon})$ is
terminal. Since the ray contracted by $\varphi$ is
$(K_X+\Delta_{\lambda,\epsilon})$-negative,
and no components of the boundary $\Delta_{\lambda,\epsilon}$ are contracted by $\varphi$ since they are movable by assumption, the log pair $(\bar X,\bar \Delta_{\lambda,\epsilon})$ is terminal as well \cite[Corollary~3.43]{Kollar-Mori-1988}.
Furthermore, for $\epsilon\ll  \lambda$ the divisor
\[
-(K_{\bar X}+\bar \Delta_{\lambda,\epsilon})\equiv \lambda \bar A + \epsilon \bar \Delta_1
\]
is ample.
Thus $\bar X$ is \FT by Lemma~\ref{lemma:Theta}\ref{lemma:Theta2}.
\end{proof}

Note that, by the cone theorem, for an \FT variety $X$ the Mori cone $\overline{\mathrm{NE}}(X)$ is polyhedral.
In particular, the number of extremal rays on $X$ is finite. We moreover have the following stronger conditions on $X$:

\begin{corollary}[cf.~{\cite[Corollary 2.7]{P-Sh:JAG},\cite[Corollary~1.3.2]{BCHM}}]
\label{cor:MDS}
A $\mathbb Q$-factorial \FT variety is a Mori dream space.
In particular, on such a variety  one can run the $D$-MMP with respect to any divisor $D$.
\end{corollary}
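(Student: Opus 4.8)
The plan is to reduce the statement to the fundamental finiteness theorem of Birkar--Cascini--Hacon--McKernan, using the reformulations of the \FT condition already established in this appendix. Since $X$ is $\QQ$-factorial and \FT, Lemma~\ref{lemma:Theta}\ref{lemma:Theta2} furnishes an effective $\QQ$-divisor $\Theta$ such that $(X,\Theta)$ is terminal (in particular klt) and $-(K_X+\Theta)$ is ample. Thus $(X,\Theta)$ is a $\QQ$-factorial klt log Fano pair, i.e.\ $X$ is a variety of Fano type in the classical sense. The first assertion is then exactly the content of \cite[Corollary~1.3.2]{BCHM} (see also \cite[Corollary~2.7]{P-Sh:JAG}): such a variety is a Mori dream space, so that its Cox ring is finitely generated and, equivalently, the pseudo-effective cone $\overline{\mathrm{Eff}}(X)$ decomposes into finitely many Mori chambers, each of which is the nef cone of a small $\QQ$-factorial modification of $X$. (Compatibly, the remark preceding the corollary already records that $\overline{\mathrm{NE}}(X)$ is rational polyhedral.)

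For the ``in particular'' clause I would deduce the runnability of the $D$-MMP for an arbitrary $\QQ$-Cartier divisor $D$ directly from the log Fano structure. Given $D$, ampleness is an open condition, so for $0<\mu\ll 1$ the class $\mu D-(K_X+\Theta)$ is again ample. By Corollary~\ref{cor:FT} numerical and $\QQ$-linear equivalence coincide on $X$, so I may choose an effective ample $\QQ$-divisor $A$ with $A\sim_{\QQ}\mu D-(K_X+\Theta)$; taking $A$ general and invoking Bertini keeps the pair $(X,\Theta+A)$ klt. Then
\[
K_X+\Theta+A \;\sim_{\QQ}\; \mu D,
\]
and this relation is preserved by pushforward under each divisorial contraction and flip, so the $\mu D$-MMP and the $(K_X+(\Theta+A))$-MMP are literally the same sequence of steps (the contractible rays agree, being simultaneously $\mu D$-negative and $(K_X+\Theta+A)$-negative). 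Since $\Theta+A$ is big (it dominates the ample divisor $A$) and $(X,\Theta+A)$ is klt, \cite{BCHM} guarantees that this log MMP exists and terminates; as scaling by $\mu>0$ leaves the sign of intersection numbers unchanged, the $D$-MMP exists and terminates as well.

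There is essentially no hidden obstacle: the entire analytic substance---finite generation of the canonical ring, existence of flips, and termination of the MMP with scaling for klt pairs with big boundary---is black-boxed in \cite{BCHM}, while the two inputs I use from this appendix (Lemma~\ref{lemma:Theta} and Corollary~\ref{cor:FT}) have already been proved. The only point requiring genuine care is the reduction just sketched, namely rewriting an arbitrary $D$-MMP as the log MMP of a klt pair with \emph{big} boundary so that \cite{BCHM} applies; the bigness is precisely what the added ample summand $A$ supplies, and it is $\QQ$-factoriality together with Corollary~\ref{cor:FT} that makes the construction of such an effective $A$ in the prescribed $\QQ$-linear class legitimate.
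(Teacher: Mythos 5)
Your proof of the first assertion coincides with the paper's: Lemma~\ref{lemma:Theta} produces a terminal pair $(X,\Theta)$ with $-(K_X+\Theta)$ ample, and \cite[Corollary~1.3.2]{BCHM} applies directly (the paper quotes part~\ref{lemma:Theta1} of that lemma where you quote part~\ref{lemma:Theta2}; this is immaterial). For the ``in particular'' clause, however, you take a genuinely different route. The paper never leaves the boundary $\Delta$ of Definition~\ref{def:FT}: using condition~\ref{def:FT}\ref{def:FT3} it writes $D\equiv\sum d_i\Delta_i$ and perturbs to $\Delta_D:=\Delta-\epsilon\sum d_i\Delta_i$, which for $0<\epsilon\ll 1$ is still an effective terminal boundary whose log canonical class is numerically proportional to $D$; the $D$-MMP is then a log MMP for a terminal pair supported on the original components, with no Bertini argument and no new divisors. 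You instead add a general ample effective divisor $A\sim_{\QQ}\mu D-(K_X+\Theta)$, so that $K_X+\Theta+A\sim_{\QQ}\mu D$ with $(X,\Theta+A)$ klt and $\Theta+A$ big, and invoke BCHM existence and termination (with scaling). What each approach buys: yours uses only the output of Lemma~\ref{lemma:Theta}, so it works verbatim for any $\QQ$-factorial klt log Fano variety and never touches the spanning condition~\ref{def:FT}\ref{def:FT3}, whereas the paper's perturbation exploits precisely that condition, which is the structural point of its bespoke definition of \FT. Two minor remarks: your appeal to Corollary~\ref{cor:FT} is superfluous, since $\mu D-(K_X+\Theta)$ is ample and $\QQ$-Cartier by $\QQ$-factoriality, so a general member of $\lvert m(\mu D-(K_X+\Theta))\rvert$ for sufficiently divisible $m\gg 0$ already yields $A$ in the required $\QQ$-linear class; and your sign bookkeeping is in fact cleaner than the paper's, since with $\Delta_D=\Delta-\epsilon\sum d_i\Delta_i$ one computes $K_X+\Delta_D\equiv-\epsilon D$, so as literally written the paper's pair governs the $(-D)$-MMP (the perturbation should be $\Delta+\epsilon\sum d_i\Delta_i$), while your relation $K_X+\Theta+A\sim_{\QQ}\mu D$ identifies $D$-negative rays with log-canonical-negative rays, as required.
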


\begin{proof}
Let $X$ be a $\mathbb Q$-factorial \FT variety. By Lemma~\ref{lemma:Theta}\ref{lemma:Theta1}, there exists an effective $\mathbb{Q}$-divisor $\Theta$ such that  the pair $(X,\Theta)$ is terminal and $-(K_X+\Theta)$ is ample. This implies that $X$ is a Mori dream space \cite[Corollary~1.3.2]{BCHM}.

Let $\Delta=\sum b_i\Delta_i$ be as in Definition~\ref{def:FT}.
For each divisor $D$, we can write $D\equiv \sum d_i\Delta_i$ (by  \ref{def:FT}\ref{def:FT3}).
Thus for $0<\epsilon \ll 1$ there is a divisor $\Delta_D:=\Delta-\epsilon \sum d_i\Delta_i$
 such that $(X,  \Delta_D)$ is terminal and $-(K_X+\Delta_D)\equiv D$. We can thus run a  $D$-MMP, as this is is the same as the $(K_X+\Delta_D)$-MMP.
\end{proof}

\begin{corollary}
\label{cor:FTnef}
 Let  $X$ be an \FT variety. Then there exists a birational map $\psi\colon X \dashrightarrow X'$
 such that $\psi$ is an isomorphism in codimension one and such that
$X'$ has only $\mathbb Q$-factorial terminal singularities and $-K_{X'}$ is nef and big $(X'$ is weak Fano$)$.
\end{corollary}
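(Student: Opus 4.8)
The plan is to realize $X'$ as the outcome of an anticanonical MMP, after first reducing to the $\mathbb{Q}$\nobreakdash-factorial case so that the Mori dream space machinery of Corollary~\ref{cor:MDS} becomes available. By Lemma~\ref{lemma:Theta}\ref{lemma:Theta1} there is a terminal boundary $\Theta$ with $-(K_X+\Theta)$ ample, so $(X,\Theta)$ is a klt (indeed terminal) log Fano pair. BCHM \cite{BCHM} then provides a small $\mathbb{Q}$\nobreakdash-factorialization $\tilde X\to X$: it is an isomorphism in codimension one, $\tilde X$ is $\mathbb{Q}$\nobreakdash-factorial and terminal, and $\tilde X$ is again \FT by Lemma~\ref{lemma:FT-bir-small}. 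Since composing with an isomorphism in codimension one preserves both the hypothesis and the desired conclusion, I may replace $X$ by $\tilde X$ and assume from now on that $X$ is $\mathbb{Q}$\nobreakdash-factorial, terminal and \FT.

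Next I would record that $-K_X$ is \emph{big and movable}. Applying Lemma~\ref{lemma:Theta}\ref{lemma:Theta1} to this (now $\mathbb{Q}$\nobreakdash-factorial) $X$, write $-K_X = A + \Theta$ with $A:=-(K_X+\Theta)$ ample and $\Theta$ effective with movable components. Then $-K_X$ is big, being the sum of an ample and an effective divisor, and its class lies in the interior of the movable cone, since $A$ lies in the interior of the nef cone and $\Theta=\sum b_i\Theta_i$ is a nonnegative combination of movable classes.

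The heart of the argument is to run the $D$\nobreakdash-MMP with $D=-K_X$, which is legitimate and terminates because $X$ is a Mori dream space by Corollary~\ref{cor:MDS}. I claim every step is a flip. Each intermediate model $X_j$ reached by flips is isomorphic in codimension one to $X$, so its movable cone is canonically identified with that of $X$ and its class $-K_{X_j}$ corresponds to the big, movable class $-K_X$. A $(-K)$\nobreakdash-negative \emph{divisorial} contraction is then impossible, because a movable divisor is nonnegative on the covering family of curves contracted by any divisorial extremal contraction, so it cannot be strictly negative on such an extremal ray; and a \emph{Mori fibre space} is impossible because $-K$ is big, hence pseudoeffective. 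Thus only flips occur, and by Corollary~\ref{cor:FT-flips} together with the standard stability of $\mathbb{Q}$\nobreakdash-factoriality and terminality under flips, the process yields a model $X'$, isomorphic in codimension one to $X$, that is $\mathbb{Q}$\nobreakdash-factorial, terminal and \FT, with $-K_{X'}$ nef. Composing with the $\mathbb{Q}$\nobreakdash-factorialization gives the required $\psi\colon X\dashrightarrow X'$; finally, bigness of $-K$ is a numerical invariant under isomorphisms in codimension one, so $-K_{X'}$ is nef and big, i.e.\ $X'$ is weak Fano.

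The step I expect to be the main obstacle is justifying that the $(-K_X)$\nobreakdash-MMP contracts no divisor and ends at a minimal model rather than a fibration. I would pin both facts on the two properties isolated above: movability of $-K_X$ forces every extremal step to be small, while bigness rules out the fibre-type termination. The delicate points to write carefully are the reduction to the $\mathbb{Q}$\nobreakdash-factorial case (so that Corollary~\ref{cor:MDS} genuinely applies) and the inductive verification that movability and bigness of $-K$ persist along the chain of flips, which I obtain from the canonical identification of Néron--Severi data under the isomorphisms in codimension one.
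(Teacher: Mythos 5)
Your proposal is correct and follows essentially the same route as the paper's proof: reduce to the $\mathbb{Q}$\-/factorial case via a small $\mathbb{Q}$\-/factorialization (Lemma~\ref{lemma:FT-bir-small}), run the $(-K_X)$-MMP, argue that every step is a flip, and conclude that the resulting model is \FT, terminal, with $-K$ nef and big. The only cosmetic difference is in packaging: the paper phrases the ``all steps are flips'' argument via the effective divisor $\Delta\equiv -K_X$ with movable components (Definition~\ref{def:FT}\ref{def:FT4}) and reproves bigness on $X'$ via Lemma~\ref{lemma:Theta}\ref{lemma:Theta1}, whereas you phrase the same facts as $-K_X$ lying in the movable cone (ruling out divisorial contractions) and being big (ruling out a fibre-type ending), transporting both properties along the flips.
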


\begin{proof}
According to Lemma~\ref{lemma:FT-bir-small}, we may replace $X$ with its small $\mathbb Q$\-/factorialization. Run the $-K_{X}$-MMP.
Since $-K_X\equiv \Delta$, where $\Delta$ is effective, on each step the exceptional locus
is contained in the proper transform of $\Delta$.
By  \ref{def:FT}\ref{def:FT4} none of components of $\Delta$ are contracted.
Hence all the steps of the MMP are flips.
We end up with  a variety  $X'$,  which is \FT by Corollary~\ref{cor:FT-flips}, with nef anticanonical divisor. By Lemma~\ref{lemma:Theta}\ref{lemma:Theta1}, there exists an effective $\mathbb{Q}$-divisor $\Theta$ such that  $-(K_{X'}+\Theta)$ is ample. This implies that $-K_{X'}$ is  big.
\end{proof}

\begin{lemma}
Let $X \dashrightarrow X'$ be a birational map that is an
isomorphism in codimension $1$. If $X$ is \FT , then so is $X'$.
\end{lemma}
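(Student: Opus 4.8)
The plan is to reduce this statement to the results already established for the two elementary types of pseudo-isomorphisms, namely flips and small contractions. A birational map $\psi\colon X \dashrightarrow X'$ that is an isomorphism in codimension $1$ (a small birational map) factors, after running a suitable MMP, into a sequence of elementary small modifications. Concretely, I would first invoke Corollary~\ref{cor:MDS}: since $X$ is \FT and we may assume (by Lemma~\ref{lemma:FT-bir-small}) that $X$ is $\mathbb Q$-factorial, it is a Mori dream space. The crucial feature of a Mori dream space is that any two small $\mathbb Q$-factorial modifications of it are connected by a finite sequence of flips, each obtained by running an appropriately chosen $D$-MMP. Thus $\psi$ decomposes as $X = X_0 \dashrightarrow X_1 \dashrightarrow \cdots \dashrightarrow X_k = X'$, where each $X_i \dashrightarrow X_{i+1}$ is a flip.

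Once this decomposition is in hand, the conclusion is immediate by induction on the length $k$: the \FT property is preserved under flips by Corollary~\ref{cor:FT-flips}, so if $X_i$ is \FT then so is $X_{i+1}$, and hence $X'=X_k$ is \FT. The base case is the hypothesis that $X=X_0$ is \FT.

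There is a subtlety I would need to address, since the statement allows $X'$ to be merely normal rather than $\mathbb Q$-factorial, and the Mori-dream-space factorization produces only $\mathbb Q$-factorial intermediate models. I would handle the non-$\mathbb Q$-factorial target as follows: first replace $X$ by a small $\mathbb Q$-factorialization $\tilde X$ (which is \FT by Lemma~\ref{lemma:FT-bir-small}), and replace $X'$ by a small $\mathbb Q$-factorialization $\tilde X'$. The induced map $\tilde X \dashrightarrow \tilde X'$ is still an isomorphism in codimension $1$ between $\mathbb Q$-factorial \FT-candidates, so the flip decomposition above shows $\tilde X'$ is \FT; then Lemma~\ref{lemma:FT-bir-small}, applied to the small contraction $\tilde X' \to X'$, transports the \FT property back down to $X'$. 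I expect the main obstacle to be precisely this bookkeeping with $\mathbb Q$-factoriality—ensuring that the chosen small modifications are compatible and that each elementary step genuinely falls under the hypotheses of Corollary~\ref{cor:FT-flips} and Lemma~\ref{lemma:FT-bir-small}. The geometric input (that small modifications of Mori dream spaces are composites of flips) is standard once $X$ is known to be a Mori dream space, so the real content is simply assembling the cited lemmas correctly rather than proving anything new.
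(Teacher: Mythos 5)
Your reduction has a genuine gap at the point you yourself flag as ``bookkeeping'': the step ``replace $X'$ by a small $\mathbb Q$-factorialization $\tilde X'$'' is not available. Small $\mathbb Q$-factorializations are known to exist for varieties carrying a klt (or terminal) pair structure, via the MMP of \cite{BCHM}; but the hypotheses give you nothing about the singularities of $X'$ beyond normality --- a priori $X'$ need not even be $\mathbb Q$-Gorenstein. A posteriori $X'$ does admit such a model, but only because it is \FT, which is exactly what you are trying to prove; so as written the argument is circular. Note the asymmetry: $\mathbb Q$-factorializing $X$ is legitimate, because $X$ comes with a terminal pair $(X,\Delta)$ as in Definition~\ref{def:FT}, whereas producing a small $\mathbb Q$-factorial model \emph{over $X'$} is the actual content of the lemma, not a routine preliminary.

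The paper's proof consists precisely of constructing that model. It takes a common log resolution $X \xleftarrow{p} \tilde X \xrightarrow{q} X'$, writes $K_{\tilde X}+\tilde\Delta = p^*(K_X+\Delta)+E$ with $E\ge 0$ supported on the exceptional locus (the $p$- and $q$-exceptional divisors coincide because $X\dashrightarrow X'$ is an isomorphism in codimension one), and runs the $(K_{\tilde X}+\tilde\Delta)$-MMP \emph{over} $X'$; since $K_{\tilde X}+\tilde\Delta\equiv E$ relatively, this MMP contracts exactly the components of $E$ and terminates with a terminal pair $(\hat X,\hat\Delta)$, $K_{\hat X}+\hat\Delta\equiv 0$, admitting a \emph{small} contraction $\hat X\to X'$; then $\hat X$ is \FT and the small-contraction lemma (Lemma~\ref{lemma:FT-bir-small}) transfers the property to $X'$. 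Your Mori-dream-space plan can be repaired in the same spirit without ever postulating $\tilde X'$: on the small $\mathbb Q$-factorialization $\tilde X$ of $X$, which is a Mori dream space by Corollary~\ref{cor:MDS}, run the $D$-MMP for $D$ the strict transform of an ample divisor on $X'$. No divisorial contraction can occur, because the composite birational map to $X'$ contracts no divisor; hence all steps are flips, the output $X''$ is \FT by Corollary~\ref{cor:FT-flips}, and the ample model morphism $X''\to X'$ is a small contraction, so Lemma~\ref{lemma:FT-bir-small} concludes. But observe that this repair \emph{is} the construction of the model you assumed to exist, so the real work of the lemma cannot be avoided by citation alone.
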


\begin{proof}Let $\Delta$ be as in Definition~\ref{def:FT}.
Let $X \xlongleftarrow{p}\tilde X\xlongrightarrow{q} X'$ be
a common log resolution. Write
\[
K_{\tilde X}+\tilde \Delta=p^*(K_X+\Delta )+E.
\]
where $\tilde \Delta$ is the proper transform of $\Delta$ and  $E$ is
an effective divisor whose support coincides with the exceptional locus.

Since $K_{\tilde X}+\tilde \Delta$ is numerically equivalent to $E$ over $X$ and $E>0$, we may run the $(K_{\tilde X}+\tilde \Delta)$-MMP over $X'$ and contract all the components of $E$. We end up with a terminal pair
$(\hat X,\hat \Delta)$ having a small contraction to $X'$, where $\hat \Delta$ is the proper transform of $\Delta$.
Moreover,  $K_{\hat X}+\hat \Delta\equiv 0$. Hence $\hat X$ is of \FT type.
The variety $X'$ is \FT by Lemma~\ref{lemma:FT-bir-div}.
\end{proof}

From Lemma~\ref{lemma:Theta} and  \cite[Theorem~1.1]{BirkarS} we have the following.

\begin{corollary}
\label{cor:bound}
Let $n\ge 2$ be an integer. The \FT varieties of dimension $n$ form a bounded family. This means that there exists a flat morphism $\mathfrak{h}\colon \mathfrak {X}\to \mathfrak {S}$
over a scheme $\mathfrak {S}$ of finite type such that each  \FT variety of dimension $n$ is isomorphic to a fiber of $\mathfrak{h}$.
\end{corollary}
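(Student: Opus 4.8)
The plan is to reduce the corollary to the boundedness theorem of Birkar by producing, on each \FT variety of dimension $n$, a log Fano pair structure whose singularities are controlled uniformly. First I would apply Lemma~\ref{lemma:Theta}\ref{lemma:Theta1}: any \FT variety $X$ of dimension $n$ carries an effective $\QQ$-divisor $\Theta$ such that the pair $(X,\Theta)$ is terminal and $-(K_X+\Theta)$ is ample. In particular $-(K_X+\Theta)$ is nef and big, so $(X,\Theta)$ is precisely a log Fano pair of dimension $n$, the type of object that \cite[Theorem~1.1]{BirkarS} governs.

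The crucial input is that the pair is not merely klt but terminal, and that this forces a lower bound on log discrepancies that does not depend on $X$. Over every exceptional divisor the log discrepancy of $(X,\Theta)$ exceeds $1$, while along the components of $\Theta$ it equals $1$ minus the corresponding coefficient. Using the freedom in Definition~\ref{def:FT} and Lemma~\ref{lemma:Theta} to take the components of $\Theta$ movable with coefficients bounded away from $1$, the log discrepancies stay above a constant $\epsilon>0$ depending only on $n$. Hence there is a single $\epsilon$ for which $(X,\Theta)$ is $\epsilon$-log canonical simultaneously for all \FT varieties $X$ of dimension $n$.

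With this uniform $\epsilon$ fixed, \cite[Theorem~1.1]{BirkarS} states that the projective varieties $X$ that admit some boundary $B$ with $(X,B)$ an $\epsilon$-lc pair of dimension $n$ and $-(K_X+B)$ nef and big form a bounded family. Taking $B=\Theta$, every \FT variety of dimension $n$ belongs to this family. By definition, boundedness yields a scheme $\mathfrak{S}$ of finite type together with a flat morphism $\mathfrak{h}\colon \mathfrak{X}\to\mathfrak{S}$ such that each such $X$ is isomorphic to a fibre of $\mathfrak{h}$, which is exactly the assertion.

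I expect the serious mathematical content to lie entirely within Birkar's theorem, which here is invoked as a black box; on our side the work is purely the translation just described. The step requiring the most care is the uniformity of $\epsilon$: it is essential that the pair be terminal rather than only klt, since for a family of merely klt log Fano pairs the minimal log discrepancy can degenerate to $0$, in which case Birkar's boundedness no longer applies. Checking that the boundary $\Theta$ can be chosen with coefficients uniformly bounded away from $1$ is therefore the one point that must be verified rather than quoted.
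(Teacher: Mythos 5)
You take the same route as the paper: Lemma~\ref{lemma:Theta}\ref{lemma:Theta1} produces an effective $\QQ$-divisor $\Theta$ with $(X,\Theta)$ terminal and $-(K_X+\Theta)$ ample, and the pairs $(X,\Theta)$ are fed into \cite[Theorem~1.1]{BirkarS}. In fact the paper's proof consists of exactly these two sentences, with the theorem invoked ``with $\epsilon=1$'' and no discussion of uniformity at all. The subtlety you flag is therefore not of your making, and it is a real one: in Birkar's convention a pair $(X,B)$ is $\epsilon$-lc only if \emph{every} prime divisor over $X$ has log discrepancy $\ge\epsilon$, including the components $B_i$ of $B$ themselves, whose log discrepancy is $1-b_i$. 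Read literally, $\epsilon=1$ would force $\Theta=0$, and terminality of $(X,\Theta)$ gives no uniform $\epsilon$, because neither Definition~\ref{def:FT} nor Lemma~\ref{lemma:Theta} bounds the coefficients away from $1$: for every $\delta>0$ and $k\gg 0$, the pair $(\p^2,\,(1-\delta)L_0+\sum_{i=1}^{k}\frac{2+\delta}{k}L_i)$ with $L_0,\dots,L_k$ general lines is a legitimate \FT structure on $\p^2$ whose largest coefficient is $1-\delta$.

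The gap is that your key uniformity claim --- that $\Theta$ may be chosen with coefficients bounded away from $1$ by a constant depending only on $n$ --- is asserted rather than proved, as you concede in your last sentence; ``the freedom in Definition~\ref{def:FT} and Lemma~\ref{lemma:Theta}'' does not by itself provide it. It can be closed using the movability hypothesis~\ref{def:FT}\ref{def:FT4}, which is exactly what it is good for here. Write $\Theta=\sum\theta_i\Theta_i$; each $|\Theta_i|$ has no fixed component, hence positive dimension, so one may pick general members $\Theta_i'\in|\Theta_i|$ and set $\Theta'':=\sum\frac{\theta_i}{2}(\Theta_i+\Theta_i')$. Then $\Theta''\sim_{\QQ}\Theta$, so $-(K_X+\Theta'')$ is still ample; all coefficients of $\Theta''$ are $<1/2$ since $\theta_i<1$ and the chosen members are pairwise distinct; and, log discrepancies being affine in the boundary, every exceptional divisor $E$ satisfies
\[
a(E,X,\Theta'')=\frac{1}{2}\,a(E,X,\Theta)+\frac{1}{2}\,a\Bigl(E,X,\sum\theta_i\Theta_i'\Bigr)>\frac{1}{2},
\]
using that $(X,\Theta)$ is terminal and that $(X,\sum\theta_i\Theta_i')$ is log canonical for general members (semicontinuity of log canonical thresholds in a linear system). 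Hence $(X,\Theta'')$ is $\frac{1}{2}$-lc with $-(K_X+\Theta'')$ ample, and \cite[Theorem~1.1]{BirkarS} applies with $\epsilon=1/2$. With this step written out your argument is complete --- and on this particular point it is more scrupulous than the paper's own proof.
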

\begin{proof}
For each  \FT  variety $X$ of dimension $n$, there exists an effective $\mathbb{Q}$-divisor $\Theta$ on $X$, such that  the pair $(X,\Theta)$ is terminal (Lemma~\ref{lemma:Theta}) and $-(K_{X}+\Theta)$ is ample.  Hence, we may apply \cite[Theorem~1.1]{BirkarS} to the pairs  $(X,\Theta)$ with $\epsilon=1$.
\end{proof}
In the sequel, when we say that a set of varieties is bounded, we always
mean, as above, that there is a flat family over  a scheme  of finite
type such that every variety in the set is isomorphic to a fiber.

\begin{remark}
\label{rem:boundK}
Let  $\{ X_\alpha\}$ be a set of $\mathbb Q$-factorial \FT varieties of dimension $n$
and let $\mathfrak{h}\colon \mathfrak {X}\to \mathfrak {S}$  be a flat family as in \textup{\ref{cor:bound}}. We identify the varieties $X_\alpha$ with fibers of $\mathfrak{h}$ and may assume that  the set $\{ \mathfrak{h}(X_\alpha)\}$ is dense in  $\mathfrak {S}$.
Then by \cite{Kawamata:def-can} the total space $\mathfrak {X}$ has only canonical singularities.
In particular,  $\mathfrak {X}$ is $\mathbb Q$-Gorenstein.
This implies that $(-K_{X_\alpha})^n$ is bounded. More precisely, the numbers $(-K_{X_\alpha})^n$ form a finite set that depends only on $n$.
\end{remark}

\begin{proposition}
\label{prop:bound-dcontr}
 Let  $\{ X_\alpha\}$ be a set of $\mathbb Q$-factorial \FT varieties of dimension $n$ and let
 $f_{\alpha,i}\colon X_{\alpha}\to Y_{\alpha,i}$ be a collection of divisorial Mori contractions.
For each $i$, we denote by $E_{\alpha,i}$ the exceptional divisor of $f_{\alpha,i}$.
Then the set $\{E_{\alpha,i}\}$ is bounded.
\end{proposition}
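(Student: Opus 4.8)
The plan is to realise each $E_{\alpha,i}$ as a member of a relative Hilbert scheme of bounded degree over a fixed finite-type base, thereby reducing the statement to a single uniform degree estimate. First I would fix the bounded family: by Corollary~\ref{cor:bound} and Remark~\ref{rem:boundK} the $X_\alpha$ are fibres of a flat family $\mathfrak h\colon \mathfrak X\to \mathfrak S$ with $\mathfrak S$ of finite type, and after stratifying $\mathfrak S$ into finitely many locally closed pieces, passing to finite covers and shrinking, I may assume $\mathfrak S$ smooth and connected, $\mathfrak X$ $\QQ$-Gorenstein with canonical singularities, and that $\mathcal H$ is a relatively very ample divisor on $\mathfrak X/\mathfrak S$ inducing a closed embedding $\mathfrak X\hookrightarrow \p^M\times \mathfrak S$ over $\mathfrak S$; set $H_\alpha:=\mathcal H|_{X_\alpha}$. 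Because $\mathcal H$ and $K_{\mathfrak X/\mathfrak S}$ are fixed classes restricted to fibres, the intersection numbers $H_\alpha^{\,a}\cdot(-K_{X_\alpha})^{b}$ take only finitely many values (by the same argument as in Remark~\ref{rem:boundK}). Finally, by Lemma~\ref{lemma:FT-bir-div} each target $Y_{\alpha,i}$ is again an \FT variety of dimension $n$, so by Corollary~\ref{cor:bound} the $Y_{\alpha,i}$ are bounded as well.

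It then suffices to bound the degrees $d_{\alpha,i}:=H_\alpha^{\,n-1}\cdot E_{\alpha,i}$ uniformly in $\alpha$ and $i$. Indeed, via the embedding above each $E_{\alpha,i}$ is a subvariety of $\p^M$ of dimension $n-1$ and degree $d_{\alpha,i}$; if the $d_{\alpha,i}$ are bounded then, by Grothendieck's boundedness of the Hilbert scheme, only finitely many Hilbert polynomials occur, so the $E_{\alpha,i}$ are parametrised by a quasi-compact locally closed subscheme $\mathfrak H\subset \mathrm{Hilb}(\mathfrak X/\mathfrak S)$. The universal divisor over $\mathfrak H$ is a flat family of finite type realising every $E_{\alpha,i}$ as a fibre, which is exactly the assertion.

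To bound $d_{\alpha,i}$ I would argue numerically. Each $f_{\alpha,i}$ is the contraction of a $K_{X_\alpha}$-negative extremal ray of relative Picard number one, so $E_{\alpha,i}$ is a rigid prime divisor spanning an extremal ray of the effective cone $\overline{\mathrm{Eff}}(X_\alpha)$, which is rational polyhedral because $X_\alpha$ is a Mori dream space (Corollary~\ref{cor:MDS}). After the reductions above I may also assume the numerical classes of divisors are locally constant, i.e.\ that fixed divisors $\Xi_1,\dots,\Xi_\rho$ on $\mathfrak X$ restrict on every fibre to a basis of $N^1(X_\alpha)$ — here using that numerical and $\QQ$-linear equivalence coincide on \FT varieties (Corollary~\ref{cor:FT}) and that $\rho(X_\alpha)$ is constant along the family. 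Writing $[E_{\alpha,i}]=\sum_j c_j\,\Xi_j|_{X_\alpha}$, bounding $d_{\alpha,i}=\sum_j c_j\,(\Xi_j\cdot H_\alpha^{\,n-1})$ amounts to bounding the coefficients $c_j$, i.e.\ to bounding the extremal generators of the cones $\overline{\mathrm{Eff}}(X_\alpha)$ read off in the fixed frame $\Xi_\bullet\cong\RR^\rho$.

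The main obstacle is precisely this last point: showing that, across the bounded family, the cones $\overline{\mathrm{Eff}}(X_\alpha)\subset\RR^\rho$ form a constructible family of rational polyhedral cones whose integral extremal generators lie in a fixed finite set. This is where the boundedness theorem for \FT varieties (\cite[Theorem~1.1]{BirkarS}, underlying Corollary~\ref{cor:bound}) has to be leveraged beyond the bare boundedness of the $X_\alpha$ and $Y_{\alpha,i}$; granting it, the classes $[E_{\alpha,i}]$ lie in a finite set, the $d_{\alpha,i}$ are bounded, and the two preceding paragraphs conclude. A more hands-on alternative would instead bound the $\mathcal H$-degree of the covering family of extremal rational curves $\ell\subset E_{\alpha,i}$, which satisfy $-K_{X_\alpha}\cdot\ell\le 2n$ by the length bound for $K$-negative extremal rays, and then deduce a bound on $d_{\alpha,i}$ from the fact that such curves sweep out $E_{\alpha,i}$; but controlling the $\mathcal H$-degree of these curves relies on the same boundedness input, so I expect either route to concentrate the difficulty in the uniform control of the numerical data.
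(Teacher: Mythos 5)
Your reduction (flat family from Corollary~\ref{cor:bound} and Remark~\ref{rem:boundK}, fibrewise projective embedding, reduction to a uniform bound on the degrees $d_{\alpha,i}=H_\alpha^{\,n-1}\cdot E_{\alpha,i}$) is sound and matches the paper's general strategy. But the decisive step is missing: you reduce everything to the claim that the cones $\overline{\mathrm{Eff}}(X_\alpha)\subset\RR^{\rho}$ vary constructibly over the family with integral extremal generators in a fixed finite set, and then you explicitly ``grant'' it. Nothing in the cited toolkit provides this. Boundedness of the $X_\alpha$ (or of the targets $Y_{\alpha,i}$) does not control the numerical classes of extremal effective divisors; in fact, finiteness of the set of classes $[E_{\alpha,i}]$ in a fixed frame is essentially equivalent to the boundedness of $\{E_{\alpha,i}\}$ that you are trying to prove, so the argument is circular exactly where it needs content. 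Mori-dream-space-ness of each individual $X_\alpha$ (Corollary~\ref{cor:MDS}) gives rational polyhedrality fibre by fibre, but nothing uniform across $\alpha$.

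The paper closes precisely this gap by a mechanism your final paragraph gestures at but dismisses too quickly. One works on the total space: choose $\mathfrak{A}$ ample on $\mathfrak{X}$ and $\mathfrak{L}$ ample on $\mathfrak{S}$ with $-K_{\mathfrak{X}}+\mathfrak{h}^*\mathfrak{L}$ big; Kodaira's lemma yields an integer $m$ (depending only on $n$) and an effective $\mathfrak{D}$ with $-mK_{\mathfrak{X}}+m\mathfrak{h}^*\mathfrak{L}\sim\mathfrak{A}+\mathfrak{D}$. By Kawamata's length bound, $E_{\alpha,i}$ is covered by curves $C$ contracted by $f_{\alpha,i}$ with $-K_{X_\alpha}\cdot C\le 2n$; these curves lie in fibres of $\mathfrak{h}$, so $\mathfrak{h}^*\mathfrak{L}\cdot C=0$ and hence $\mathfrak{A}\cdot C+\mathfrak{D}\cdot C\le 2nm$. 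Now there is a dichotomy: if $\mathfrak{D}\cdot C<0$, the covering curves force $E_{\alpha,i}\subseteq\operatorname{Supp}\mathfrak{D}$, whence
\[
E_{\alpha,i}\cdot(\mathfrak{A}|_{X_\alpha})^{n-1}\le \mathfrak{D}\cdot X_\alpha\cdot\mathfrak{A}^{n-1}=(-mK_{\mathfrak{X}}+m\mathfrak{h}^*\mathfrak{L}-\mathfrak{A})\cdot X_\alpha\cdot\mathfrak{A}^{n-1}
\]
is bounded directly; otherwise $\mathfrak{A}\cdot C\le 2nm$, so the covering curves move in finitely many components of the relative Hilbert scheme of $\mathfrak{X}/\mathfrak{S}$, and $E_{\alpha,i}$, being swept out by them, is the image of a component of a fibre of the universal family, hence lies in a bounded set. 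So your ``hands-on alternative'' is in fact the correct route, and your worry that controlling the $\mathfrak{A}$-degree of the extremal curves ``relies on the same boundedness input'' is unfounded: that degree bound follows from the length bound together with a Kodaira decomposition on the total space, with no reference whatsoever to cones of divisors or to finiteness of numerical classes.
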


\begin{proof}
Let $\mathfrak{h}\colon \mathfrak {X}\to \mathfrak {S}$ be a flat family given 
by Corollary~\ref{cor:bound} and Remark~\ref{rem:boundK} so that 
$X_{\alpha}$ is the fiber over $\mathfrak {s}_{\alpha}\in \mathfrak {S}$.
Then for some ample divisor $\mathfrak{L}$ on $\mathfrak {S}$ the divisor $-K_{\mathfrak {X}}+\mathfrak{h}^*\mathfrak{L}$ 
is big on $\mathfrak {X}$.
Let $\mathfrak {A}$ be an ample divisor on $\mathfrak {X}$.
By Kodaira's lemma there exists an integer $m$ (that depends only on $n$) such that
$-m K_{\mathfrak {X}}+m\mathfrak{h}^*\mathfrak{L} -\mathfrak {A}$ is effective, i.e. $-m K_{\mathfrak {X}}+m\mathfrak{h}^*\mathfrak{L}\sim \mathfrak {A}+\mathfrak {D}$,
where $\mathfrak {D}$ is effective. According to \cite{Kawamata:length} the exceptional divisor $E_{\alpha,i}$ is covered by a family of curves $\{C^{\lambda}_{\alpha,i}\}$, such that $C^{\lambda}_{\alpha,i}\cdot E_{\alpha_i}<0$ in $X_\alpha$ and such that each $f(C^{\lambda}_{\alpha,i})$  is a point
and $-K_{X_{\alpha}}\cdot C^{\lambda}_{\alpha,i}\le 2n$. If  $\mathfrak {D}\cdot C^{\lambda}_{\alpha,i}<0$ for some $\alpha$ and $i$, then $E_{\alpha,i}$ is contained in $\mathfrak {D}$.
Hence, the degree of $E_{\alpha,i}$, with respect to the ample divisor $\mathfrak {A}$, can be bounded as follows
\[
E_{\alpha,i}\cdot (\mathfrak {A}|_{X_{\alpha}})^{n-1} \le \mathfrak {D}\cdot X_{\alpha}   \cdot \mathfrak {A}^{n-1} = (-m K_{\mathfrak {X}}+m\mathfrak{h}^*\mathfrak{L} -\mathfrak {A}) \cdot X_{\alpha}   \cdot \mathfrak {A}^{n-1}.
\]
Thus for such $\alpha$ and $i$ the set  $\{E_{\alpha,i}\}$ is bounded. 
From now on we may assume that $\mathfrak {D}\cdot C^{\lambda}_{\alpha,i}\ge 0$
and so $\mathfrak {A}\cdot C^{\lambda}_{\alpha,i}\le 2nm$. 
Hence $\{C^{\lambda}_{\alpha,i}\}$ is bounded, i.e.~the set of all curves $C^{\lambda}_{\alpha,i}$
is contained in a finite union of irreducible components
$\mathfrak {H}_{{\alpha,i}}$ 
of the relative Hilbert scheme of $\mathfrak {X}/ \mathfrak {S}$.
We may consider only one of them and put $\mathfrak {H}:=\mathfrak 
{H}_{{\alpha,i}}$. Let $\mathfrak{U}\to \mathfrak {H}$ be the universal family 
and let $\Psi\colon \mathfrak{U}\to \mathfrak{X}$ be the corresponding morphism.
Denote the composition by $\Phi=\mathfrak{h}\circ \Psi\colon \mathfrak{U}\to \mathfrak{S}$.
By construction any divisor  $E_{\alpha,i}$ 
coincides with
the image under 
$\Psi$ of a 
component 
of the fiber $\Phi^{-1}(\mathfrak {s}_{\alpha})$.
The $\Psi$-images of the components of the fibers of $\Phi$ lie in a finite number 
of components of the relative Hilbert scheme of $\mathfrak{X}$ over 
$\mathfrak{S}$. 
This means that  $\{E_{\alpha,i}\}$ is bounded.
\end{proof}

Proposition~\textup{\ref{prop:bound-dcontr}} implies that the birational invariants
of  $E_{\alpha,i}$ are bounded. In particular, we have the following.

\begin{corollary}
\label{rcorollary:bound-genus}
Let us take the notation of
Proposition~\textup{\ref{prop:bound-dcontr}}. There is an integer $m$,
depending only on $n$, such that each $E_{\alpha,i}$ is birationally
equivalent to $\mathbb P^1\times \Gamma_{\alpha,i}$ where the covering gonality of $\Gamma_{\alpha,i}$ satisfies $\covgon(\Gamma_{\alpha,i})\le m$. If moreover $n=3$, the set $\{g( \Gamma_{\alpha,i})\}$ of genera of the curves $ \Gamma_{\alpha,i}$ is a bounded set of integers.
\end{corollary}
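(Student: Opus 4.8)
The plan is to deduce everything from the boundedness of $\{E_{\alpha,i}\}$ established in Proposition~\ref{prop:bound-dcontr}. The structural input is that, being a prime divisor contracted by the birational morphism $f_{\alpha,i}$, each $E_{\alpha,i}$ is birational to $\p^1\times\Gamma_{\alpha,i}$ for a variety $\Gamma_{\alpha,i}$ of dimension $n-2$, exactly as recalled in the introduction (applied after resolving $X_\alpha$); thus $E_{\alpha,i}$ is uniruled and $\Gamma_{\alpha,i}$ is the base of a $\p^1$-fibration on it. Since $\covgon(\Gamma_{\alpha,i})$ and, when $n=3$, the genus $g(\Gamma_{\alpha,i})$ are birational invariants of $\Gamma_{\alpha,i}$, it suffices to show that the $\Gamma_{\alpha,i}$ lie in a bounded family and then to bound these invariants over an arbitrary bounded family.

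For the covering gonality I would argue uniformly in $n$ by a dichotomy. If $\Gamma_{\alpha,i}$ is uniruled, a general point lies on a rational curve, whose normalisation is $\p^1$, so $\covgon(\Gamma_{\alpha,i})=1$. If $\Gamma_{\alpha,i}$ is not uniruled, then the $\p^1$-fibration $E_{\alpha,i}\dashrightarrow\Gamma_{\alpha,i}$ is the maximal rationally connected fibration, so $\Gamma_{\alpha,i}$ is birational to the MRC quotient of $E_{\alpha,i}$. Starting from a flat family $\mathfrak E\to\mathfrak T$ over a finite-type base with fibres the $E_{\alpha,i}$ (Proposition~\ref{prop:bound-dcontr}), I would, after a fibrewise resolution and a finite Noetherian stratification of $\mathfrak T$, form the relative MRC fibration; its fibres are the MRC quotients and its base is again of finite type, so the non-uniruled $\Gamma_{\alpha,i}$ form a bounded family. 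Embedding this family fibrewise in a fixed $\p^N$ with a relatively very ample line bundle bounds the fibre-degrees by some $d=d(n)$; cutting a fibre of dimension $k$ by $k-1$ general hyperplanes through a general point gives, by Bertini, an irreducible curve of degree $\le d$, and projecting it to $\p^1$ from a general centre exhibits a map of degree $\le d$, whence $\covgon(\Gamma_{\alpha,i})\le d=:m$ in every case. The construction of the relative MRC fibration, together with the verification that its bases stay in a single finite-type family, is the step I expect to be the main obstacle; everything else here is a degree estimate.

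For the genus when $n=3$ there is an even more direct route that sidesteps the MRC construction. In this case $E_{\alpha,i}$ is a birationally ruled surface, so on a smooth model its irregularity equals the genus of its base curve, giving $g(\Gamma_{\alpha,i})=h^1(E_{\alpha,i},\OOO)$. Resolving $\mathfrak E\to\mathfrak T$ fibrewise over a stratification and using upper semicontinuity of $h^1(\OOO)$ in the flat family, this number takes only finitely many values, so $\{g(\Gamma_{\alpha,i})\}$ is bounded. This also re-proves the $n=3$ instance of the covering-gonality bound, since for a curve $\covgon=\gon$ and the Brill--Noether inequality $\gon\le\lfloor (g+3)/2\rfloor$ converts the genus bound into a gonality bound. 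Thus, apart from the relative MRC fibration in the general case, every ingredient is either a degree computation or a semicontinuity statement.
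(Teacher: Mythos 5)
Your proposal is essentially correct, but it takes a genuinely different --- and substantially heavier --- route than the paper for the covering gonality bound. The paper never puts the varieties $\Gamma_{\alpha,i}$ themselves into a bounded family: instead, it uses Proposition~\ref{prop:bound-dcontr} to embed each $E_{\alpha,i}$ into projective space with degree $\le d=d(n)$, takes a \emph{general hyperplane section} $H\subseteq E_{\alpha,i}$ (of dimension $n-2$ and degree $\le d$), observes that $H$ dominates $\Gamma_{\alpha,i}$ because the fibres of $E_{\alpha,i}\dashrightarrow\Gamma_{\alpha,i}$ are curves, and concludes $\covgon(\Gamma_{\alpha,i})\le \covgon(H)\le d$ using the fact that covering gonality (respectively, the genus of curves, when $n=3$) does not increase under dominant generically finite maps. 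This one-step trick of cutting inside $E_{\alpha,i}$ and pushing curves down to $\Gamma_{\alpha,i}$ completely sidesteps both your uniruled/non-uniruled dichotomy and the relative MRC fibration, which is exactly the step you flag as the main obstacle: your MRC argument can indeed be made to work (spread out the MRC quotient of the geometric generic fibre over each stratum, then use Graber--Harris--Starr to recognize a fibration with rationally connected fibres and non-uniruled base as the MRC fibration, and conclude by Noetherian induction), but it invokes considerably deeper machinery for no extra payoff. Your $n=3$ genus argument via the irregularity $q=h^1(\mathcal{O})$ of a fibrewise resolution plus semicontinuity is a valid alternative to the paper's (which simply bounds the genus of the curve $H$ by its degree and uses $g(H)\ge g(\Gamma_{\alpha,i})$), and your observation that the Brill--Noether bound $\gon\le\lfloor(g+3)/2\rfloor$ then recovers the $n=3$ gonality statement is correct; so the only real cost of your route is the unproved (though true) boundedness of MRC quotients in families, which the paper's hyperplane-section argument renders unnecessary.
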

\begin{proof}As explained in the introduction (see also
\cite{Kawamata:length}), every divisor $E_{\alpha_i}$ is birational to
$\mathbb P^1\times \Gamma_{\alpha,i}$ for some variety
$\Gamma_{\alpha,i}$  of dimension $n-2$. By
Proposition~\ref{prop:bound-dcontr}, the set $\{E_{\alpha,i}\}$ is
bounded. There is some integer $d$ such that each $E_{\alpha,i}$ is
isomorphic to a closed subvariety of a projective space of degree $\le
d$. A general hypersurface $H$ of $E_{\alpha,i}$ is of degree $\le d$
and admits a dominant rational map to $\Gamma_{\alpha,i}$. This gives
$d\ge \covgon(H)\ge \covgon(\Gamma_{\alpha,i})$. If moreover $n=3$,
then the curve $H$ is of bounded degree and thus of bounded genus,
which gives the result since $g(H)\ge g(\Gamma_{\alpha,i})$. 
\end{proof}

\begin{proposition}
\label{prop:bound-Fcontr}
 Let  $\{ X_\alpha\}$ be a set of $\mathbb Q$-factorial \FT varieties of 
dimension~$n$. Let
 $f_{\alpha,i}\colon X_{\alpha}\to Y_{\alpha,i}$ be a set of Mori 
fiber spaces and let $F_{\alpha,i,\beta}$ be a set of schematic fibers 
over smooth points $\beta\in Y_{\alpha,i}$. Suppose that $\dim 
(F_{\alpha,i,\beta})=\dim(X_{\alpha})-\dim(Y_{\alpha,i})$ for all  
$\alpha$, $i$, $\beta$. 
Then the set $\{F_{\alpha,i,\beta}\}$ is bounded.
\end{proposition}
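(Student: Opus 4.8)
The plan is to follow the strategy of Proposition~\ref{prop:bound-dcontr} closely: I keep the flat family $\mathfrak{h}\colon\mathfrak{X}\to\mathfrak{S}$ supplied by Corollary~\ref{cor:bound} and Remark~\ref{rem:boundK}, with each $X_\alpha$ the fibre over $\mathfrak{s}_\alpha\in\mathfrak{S}$, fix an ample divisor $\mathfrak{A}$ on $\mathfrak{X}$ and an ample $\mathfrak{L}$ on $\mathfrak{S}$, and use Kodaira's lemma to write
\[
-mK_{\mathfrak{X}}+m\mathfrak{h}^*\mathfrak{L}\sim \mathfrak{A}+\mathfrak{D},
\]
with $\mathfrak{D}$ effective and $m$ depending only on $n$. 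As in Proposition~\ref{prop:bound-dcontr}, it suffices to bound the degree $\mathfrak{A}^{d}\cdot F_{\alpha,i,\beta}$ of every fibre, where $d=\dim F_{\alpha,i,\beta}=\dim X_\alpha-\dim Y_{\alpha,i}$, since a bound depending only on $n$ forces the fibres into finitely many components of the relative Hilbert scheme of $\mathfrak{X}/\mathfrak{S}$. Restricting attention to the $\alpha$ with $\mathfrak{s}_\alpha$ in the smooth locus of $\mathfrak{S}$, one has $K_{\mathfrak{X}}|_{X_\alpha}=K_{X_\alpha}$ and $\mathfrak{h}^*\mathfrak{L}|_{X_\alpha}=0$, so the displayed relation restricts to $\mathfrak{A}|_{X_\alpha}\equiv -mK_{X_\alpha}-\mathfrak{D}|_{X_\alpha}$.

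The key reduction is to the general fibre. Since $X_\alpha$ has terminal (hence Cohen--Macaulay) singularities, $\beta$ is a smooth point of $Y_{\alpha,i}$, and the fibre has the expected dimension, miracle flatness shows that $f_{\alpha,i}$ is flat over a neighbourhood of $\beta$; over that connected smooth neighbourhood all scheme-theoretic fibres share one Hilbert polynomial with respect to $\mathfrak{A}|_{X_\alpha}$. Hence $\mathfrak{A}^{d}\cdot F_{\alpha,i,\beta}=\mathfrak{A}^{d}\cdot F_{\mathrm{gen}}$, where $F_{\mathrm{gen}}$ is the general fibre of $f_{\alpha,i}$, and it is enough to bound the latter. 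This manoeuvre is what lets me avoid controlling the possibly badly singular special fibres directly.

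Now $F_{\mathrm{gen}}$ is well behaved. Being the general fibre of a morphism from a terminal variety it is again terminal, and by adjunction $-K_{F_{\mathrm{gen}}}=-K_{X_\alpha}|_{F_{\mathrm{gen}}}$, which is ample because $-K_{X_\alpha}$ is $f_{\alpha,i}$-ample. Thus $F_{\mathrm{gen}}$ is a terminal (so $\tfrac12$-log canonical) Fano variety of dimension $d\le n-1$, and by \cite[Theorem~1.1]{BirkarS}---the boundedness result already underlying Corollary~\ref{cor:bound}---the quantity $(-K_{F_{\mathrm{gen}}})^{d}$ is bounded by a constant $N(n)$ depending only on $n$. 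Restricting the relation above to $F_{\mathrm{gen}}$ gives $\mathfrak{A}|_{F_{\mathrm{gen}}}\equiv -mK_{F_{\mathrm{gen}}}-\mathfrak{D}|_{F_{\mathrm{gen}}}$, where $F_{\mathrm{gen}}$ is not contained in $\mathfrak{D}$, so $\mathfrak{D}|_{F_{\mathrm{gen}}}$ is effective while both $\mathfrak{A}|_{F_{\mathrm{gen}}}$ and $-mK_{F_{\mathrm{gen}}}$ are ample. Comparing $d$-th powers of the nef classes $\mathfrak{A}|_{F_{\mathrm{gen}}}\le -mK_{F_{\mathrm{gen}}}$ (their difference being pseudo-effective) yields $\mathfrak{A}^{d}\cdot F_{\mathrm{gen}}\le m^{d}(-K_{F_{\mathrm{gen}}})^{d}\le m^{d}N(n)$.

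Combining the last two paragraphs, every fibre $F_{\alpha,i,\beta}$ has $\mathfrak{A}$-degree at most $m^{d}N(n)$, a bound depending only on $n$; hence the $F_{\alpha,i,\beta}$ lie in finitely many components of the relative Hilbert scheme of $\mathfrak{X}/\mathfrak{S}$, and the set $\{F_{\alpha,i,\beta}\}$ is bounded. The main obstacle, and the place where genuine care is needed, is precisely the passage to the general fibre: the special fibres over smooth points need not be terminal and their anticanonical volumes are a priori uncontrolled, so one cannot apply boundedness of Fanos to them directly. Flatness at $\beta$ (via the Cohen--Macaulayness of $X_\alpha$ and the expected-dimension hypothesis) is what transfers the degree bound from the general fibre, where \cite[Theorem~1.1]{BirkarS} applies, to every admissible special fibre. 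A secondary subtlety is that $-K_{X_\alpha}$ is not globally nef, so the inequality $\mathfrak{A}^{d}\cdot F_{\mathrm{gen}}\le m^{d}(-K_{F_{\mathrm{gen}}})^{d}$ must be run on $F_{\mathrm{gen}}$, where the restriction of $-K_{X_\alpha}$ is genuinely ample, rather than on $X_\alpha$ itself.
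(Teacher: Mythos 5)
Your proof follows the paper's argument essentially step for step: the same flat family from Corollary~\ref{cor:bound} and Remark~\ref{rem:boundK}, the same Kodaira-lemma decomposition $-mK_{\mathfrak{X}}+m\mathfrak{h}^*\mathfrak{L}\sim\mathfrak{A}+\mathfrak{D}$, flatness near the special fibre to transfer the degree computation to the general fibre, boundedness of $(-K_{F_{\mathrm{gen}}})^{d}$ via \cite[Theorem~1.1]{BirkarS}, and discarding non-negative terms to bound the $\mathfrak{A}$-degree, hence the Hilbert-scheme boundedness. The only differences are cosmetic: you justify the flatness by miracle flatness (the paper merely asserts it), and you compare top self-intersections of nef classes via pseudo-effectivity of their difference, where the paper instead expands $(H_\alpha+D_\alpha)^{r}\cdot F$ binomially using that the restrictions of $-K_{X_\alpha}$, $H_\alpha$, $D_\alpha$ to the general fibre are proportional and nef --- both give the same bound.
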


\begin{proof}
Let $\mathfrak{h}\colon \mathfrak {X}\to \mathfrak {S}$ be a flat family given by Corollary~\ref{cor:bound} and Remark~\ref{rem:boundK}.
 Let $\mathfrak {A}$ be an ample divisor on $\mathfrak {X}$. As in the proof of Proposition~\ref{prop:bound-dcontr},  we may take an ample divisor $\mathfrak{L}$ on $\mathfrak {S}$ such that $-K_{\mathfrak {X}}+\mathfrak{h}^*\mathfrak{L}$ 
is big on $\mathfrak {X}$ and an integer $m$ (that depends only on $n$) such that
$-m K_{\mathfrak {X}}+m\mathfrak{h}^*\mathfrak{L} -\mathfrak {A}$ is effective, i.e. $-m K_{\mathfrak {X}}+m\mathfrak{h}^*\mathfrak{L}\sim \mathfrak {A}+\mathfrak {D}$,
where $\mathfrak {D}$ is effective. Denoting by $H_\alpha$ and $D_\alpha$ the restriction of $\mathfrak {A}$ and $\mathfrak{D}$ to $X_\alpha$, we obtain $-m K_X\sim H_\alpha+ D_\alpha$ where $D_\alpha$ is effective.

Note that $f_{\alpha,i}$ is flat in a neighborhood of $F_{\alpha,i, 
\beta}$.
Let $F_{\alpha,i}^{\mathrm{gen}}$ be a general fiber of $f_{\alpha,i}$ and let
$r_{\alpha,i}=\dim(F_{\alpha,i}^{\mathrm{gen}})=\dim(F_{\alpha,i, \beta})$.
By the adjunction formula $F_{\alpha,i}^{\mathrm{gen}}$ is a Fano
variety with terminal singularities. By \cite{BirkarS} (see
Remark~\ref{rem:boundK}) there is a constant $C=C(n)$ such that
\[
 (-K_{ F_{\alpha,i, \beta}})^{r_{\alpha,i}}=(-K_{ 
F^{\mathrm{gen}}_{\alpha,i}})^{r_{\alpha,i}} \le C.
\]
Since $f_{\alpha,i}\colon X_{\alpha}\to Y_{\alpha,i}$ is a Mori fiber space,
the restriction of  divisors $- K_{X_\alpha}$, $H_\alpha$, $D_\alpha$ to  $F_{\alpha,i}^{\mathrm{gen}}$
are proportional and nef. Hence the intersection numbers \[H_\alpha^i\cdot 
D_\alpha^{r_{\alpha,i}-i}\cdot F_{\alpha,i, \beta}=H_\alpha^i\cdot 
D_\alpha^{r_{\alpha,i}-i}\cdot F_{\alpha,i}^{\mathrm{gen}}\] are non-negative.
Thus we have
\[
C\ge (-K_{ F_{\alpha,i, 
\beta}})^{r_{\alpha,i}}=(-m_\alpha K_{X_\alpha})^{r_{\alpha,i}}\cdot 
F_{\alpha,i, \beta}=
 (H_\alpha+ D_\alpha)^{r_{\alpha,i}}\cdot F_{\alpha,i, \beta}\ge
  H_\alpha^{r_{\alpha,i}}\cdot F_{\alpha,i, \beta}.
\]
This shows that the degrees of $F_{\alpha,i, \beta}$
are bounded with respect to $\mathfrak {A}$.
\end{proof}

Note that all the  fibers  $F_{\alpha,i, \beta}$ are uniruled by 
\cite{Kawamata:length} and \cite[Corollary~1.5.1]{Kollar}.
Similar to Corollary~\textup{\ref{rcorollary:bound-genus}} in dimension $3$ we have  
the following. 

\begin{corollary}
\label{corollary:FT-del-Pezzp-fibers}
If, in the notation of Proposition~\textup{\ref{prop:bound-Fcontr}},
$n=3$ and $\dim(Y_{\alpha,i})=1$, then  every fiber
$F_{\alpha,i, \beta}$  is birationally equivalent to
the product of $\mathbb P^1$ and a curve $\Gamma_{\alpha,i}$
of bounded genus.
\end{corollary}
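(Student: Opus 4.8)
The plan is to mirror the proof of Corollary~\ref{rcorollary:bound-genus}, with the divisorial exceptional divisors there replaced by the fibers $F_{\alpha,i,\beta}$, and with the boundedness input now coming from Proposition~\ref{prop:bound-Fcontr}. Since $n=3$ and $\dim(Y_{\alpha,i})=1$, each $F=F_{\alpha,i,\beta}$ is a projective surface, and it is uniruled by \cite{Kawamata:length} and \cite[Corollary~1.5.1]{Kollar}, as recorded just above the statement. The goal is to produce a birational ruling $F\dashrightarrow \p^1\times\Gamma_{\alpha,i}$ and to bound $g(\Gamma_{\alpha,i})$ by a constant depending only on $n=3$.

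First I would check that $F$ is irreducible, so that the phrase ``birational to $\p^1\times\Gamma_{\alpha,i}$'' makes sense. Write $f=f_{\alpha,i}$ and let $F^{\mathrm{gen}}_{\alpha,i}$ be a general fiber. Because $\rho(X_\alpha/Y_{\alpha,i})=1$, the kernel of the restriction $N^1(X_\alpha)\to N^1(F^{\mathrm{gen}}_{\alpha,i})$ is spanned by the class of a fiber; hence every $f$-vertical prime divisor $D$ satisfies $D\equiv c\,F^{\mathrm{gen}}_{\alpha,i}$ for some $c$, and therefore $D\cdot C=f^*(\mathrm{pt})\cdot C=0$ for every curve $C$ contracted by $f$. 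If the support of $F$ were reducible, its connectedness would furnish a contracted curve $C$ lying on one component and meeting a second component $D$ with $D\cdot C>0$, a contradiction. Thus $F_{\mathrm{red}}$ is an irreducible uniruled surface, hence birationally ruled, i.e.\ birational to $\p^1\times\Gamma_{\alpha,i}$ for a smooth projective curve $\Gamma_{\alpha,i}$ (the base of its ruling, depending a priori on $\beta$, but this is harmless since only a uniform genus bound is claimed).

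It then remains to bound $g(\Gamma_{\alpha,i})$, and here I would argue exactly as in Corollary~\ref{rcorollary:bound-genus}. By Proposition~\ref{prop:bound-Fcontr} the set $\{F_{\alpha,i,\beta}\}$ is bounded, so there is an integer $d$ depending only on $n=3$ such that each $F$ is isomorphic to a closed surface of degree $\le d$ in a projective space. A general hyperplane section $H\subseteq F$ is, by Bertini, an irreducible curve of degree $\le d$, hence of geometric genus bounded solely in terms of $d$ (for instance $g(H)\le\binom{d-1}{2}$). The ruling $F\dashrightarrow\Gamma_{\alpha,i}$ restricts to a dominant morphism $H\to\Gamma_{\alpha,i}$ (a general $H$ meets each general ruling fiber in finitely many, but at least one, points), so $g(H)\ge g(\Gamma_{\alpha,i})$ by Riemann--Hurwitz. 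Combining the two inequalities bounds $g(\Gamma_{\alpha,i})$ by a constant depending only on $n=3$.

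The degree-to-genus estimate of the last paragraph is routine and identical to Corollary~\ref{rcorollary:bound-genus}; the one point genuinely requiring care is the second paragraph, namely establishing that the fiber is irreducible (so that birational ruledness of a uniruled surface applies) and checking that a general hyperplane section still dominates $\Gamma_{\alpha,i}$. I expect the irreducibility of the fiber, via $\rho(X_\alpha/Y_{\alpha,i})=1$, to be the main, though mild, obstacle.
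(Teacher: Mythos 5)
Your proposal is correct and takes essentially the same route as the paper's proof: Proposition~\ref{prop:bound-Fcontr} gives boundedness of the set of fibers, and then the hyperplane-section argument of Corollary~\ref{rcorollary:bound-genus} (bounded degree $\Rightarrow$ bounded genus of a general hyperplane section $H$, which dominates $\Gamma_{\alpha,i}$) bounds $g(\Gamma_{\alpha,i})$. The only point the paper states that you leave implicit is that the base, being normal and one-dimensional, is smooth, so the proposition (formulated for fibers over smooth points) really applies to every fiber; conversely, your irreducibility argument via $\rho(X_\alpha/Y_{\alpha,i})=1$ is a detail the paper glosses over, relying instead on the uniruledness remark preceding the corollary.
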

\begin{proof}
The base is  smooth since it is normal and 
one-dimensional.
Thus, Proposition~\ref{prop:bound-Fcontr} proves that the set   $\{F_{\alpha,i,\beta}\}$ of all fibers is bounded.
We then observe, as in Corollary~\textup{\ref{rcorollary:bound-genus}}, that this bounds the arithmetic 
genus of hyperplane sections of the  $F_{\alpha,i, \beta}$, and thus of the $\Gamma_{\alpha,i}$.
\end{proof}

\bibliography{genusproblem}
\bibliographystyle{alpha}
\end{document}